\def\R{\mathbb R}
\def\C{\mathbb C}
\def\N{\mathbb N}
\def\1{\mathbbm 1}
\def\e{\epsilon}
\def\d{\delta}
\def\s{\sigma}
\def\kk{\kappa}
\def\ll{\lambda}
\def\ex{\varepsilon}
\def\supp{\mathrm{supp}\,}
\newcommand{\interior}[1]{\raise0.2ex\hbox{$\displaystyle{\mathop{#1}^{\circ}}$}}
\newcommand{\mx}[1]{\mathbf{#1}}
\renewcommand\phi{\varphi}
\renewcommand\emptyset{\varnothing}
\def\t{\tau}
\newtheorem{theorem}{Theorem}[section]
\newtheorem*{theorem*}{Theorem}
\newtheorem*{theoremRc}{Theorem \ref{thm circular R}}
\newtheorem*{theoremNegmom}{Theorem \ref{thm neg moments}}
\newtheorem*{theoremmomentpoly}{Theorem \ref{thm moment polynomial}} 
\newtheorem{proposition}[theorem]{Proposition}
\newtheorem{definition}[theorem]{Definition}
\newtheorem{corollary}[theorem]{Corollary}
\newtheorem{lemma}[theorem]{Lemma}
\newtheorem{notation}[theorem]{Notation}
\numberwithin{equation}{section} 
\theoremstyle{remark}
\newtheorem*{remark*}{Remark}
\newtheorem{remark}[theorem]{Remark}
\theoremstyle{remark}
\long\def\symbolfootnote[#1]#2{\begingroup%
\def\thefootnote{\fnsymbol{footnote}}\footnote[#1]{#2}\endgroup}
\begin{document}

\title{Resolvents of $\mathscr{R}$-Diagonal Operators}
\author{Uffe Haagerup$^{(1)}$}
\address{$(1)$ Department of Mathematics and Computer Science, University of Southern Denmark \\ Campusvej 55 DK-5230 Odense M \; Denmark}
\email{haagerup@imada.sdu.dk}
\author{Todd Kemp$^{(2)}$}
\address{$(1)$ Department of Mathematics, MIT \\ 77 Massachusetts Avenue, Cambridge, MA \; 02139}
\email{tkemp@math.mit.edu}
\author{Roland Speicher$^{(3)}$}
\address{$(2)$ Department of Mathematics and Statistics, Queen's University \\ Jeffery Hall, Kingston, ON, Canada  \;  K7L\,3N6}
\email{speicher@mast.queensu.ca}

\begin{abstract} We consider the resolvent $(\ll-a)^{-1}$ of any $\mathscr{R}$-diagonal operator $a$ in a $\mathrm{II}_1$-factor.  Our main theorem (Theorem \ref{main theorem}) gives a universal asymptotic formula for the norm of such a resolvent.  En route to its proof, we calculate the $\mathscr{R}$-transform of the operator $|\ll-c|^2$ where $c$ is Voiculescu's circular operator, and give an asymptotic formula for the negative moments of $|\ll-a|^2$ for any $\mathscr{R}$-diagonal $a$.  We use a mixture of complex analytic and combinatorial techniques, each giving finer information where the other can give only coarse detail.  In particular, we introduce {\em partition structure diagrams} in Section \ref{sect Neg Moments}, a new combinatorial structure arising in free probability. \end{abstract}

\maketitle

\symbolfootnote[0]{(2) This work was partially supported by NSF Grant DMS-0701162.}

\vspace{-0.4in}

\section{Introduction} \label{sect intro}

\subsection{Motivation and Main Results} In this paper, we develop a number of universal norm estimates related to free probability theory.  We are, in particular, concerned with {\em $\mathscr{R}$-diagonal operators}, which are precisely defined on page \pageref{R-diagonal page}.  Originally introduced by Nica and Speicher in \cite{Nica Speicher Fields Paper}, they have been considered by many authors in papers including \cite{Haagerup 2,Haagerup Larsen,Haagerup Schultz,Haagerup Schultz 2,Kemp 2,Kemp Speicher,NSS,Larsen,Nica Speicher Duke Paper,Sniady Speicher}.  The class of $\mathscr{R}$-diagonal operators includes both Voiculescu's circular operator and Haar unitary operators, and is very large (the distribution of the real part of an $\mathscr{R}$-diagonal operator can be prescribed arbitrarily).  They are important in recent work on the invariant subspace conjecture relative to a $\mathrm{II}_1$-factor (cf.\ \cite{Haagerup Schultz,Haagerup Schultz 2}, and have been shown to maximize free entropy given distribution constraints.

\medskip

This paper is, in a sense, a continuation of \cite{Kemp Speicher} and \cite{Kemp 2}, which examined an important norm inequality (the Haagerup inequality, \cite{Haagerup 1}), originally in the context of Haar unitary operators, generalized to all $\mathscr{R}$-diagonal elements.  The Haagerup inequality compares operator norm to $L^2$-norm for homogeneous (non-commutative) polynomials in operators.  In \cite{Kemp 2}, the second author considered an alternate formulation based on the dilation $a\mapsto ra$ for $r\in(0,1)$, acting on the $C^\ast$-algebra generated by a family of free generators $a$.  Of interest are the elements $1+ra+(ra)^2+\cdots = (1-ra)^{-1}$. In that context, he proved a non-sharp version of the lower-bound in Theorem \ref{main theorem} below, for a sub-class of $\mathscr{R}$-diagonal operators (those with non-negative free cumulants).  The current paper can be viewed as providing the {\em sharp} norm inequality, for {\em all} $\mathscr{R}$-diagonal operators.

\medskip

Our main theorem, Theorem \ref{main theorem}, gives the precise rate of norm blow-up of the resolvent of an $\mathscr{R}$-diagonal operator near its spectral radius.  It is worthy of note for two reasons.  First, it is notoriously difficult to calculate resolvent blow-up rates, while we have calculated the rate for {\em all} $\mathscr{R}$-diagonal resolvents.  Second, the result is {\em universal}: the rate is always polynomial with exponent $-3/2$, and the constant is a product of a uniform factor with a quantity determined only by the $4th$ moment of the operator $a$.

\begin{theorem} \label{main theorem} Let $a$ be an $\mathscr{R}$-diagonal operator in a $\mathrm{II}_1$ factor $\mathscr{A}$ with trace $\phi$, normalized so that $\phi(aa^\ast) = \|a\|_2^2 = 1$.  Set $v(a) = \|a\|_4^4-1$.  Then $v(a)>0$ iff $a$ is not a Haar unitary, and in this case, for $\ll>1$,
\begin{equation} \label{eq main theorem}
\| (\ll-a)^{-1} \| \sim \sqrt\frac{27}{32}\,\sqrt{v(a)}\,\frac{1}{(\ll-1)^{3/2}} \quad \text{as} \quad \ll\downarrow 1.
\end{equation}
\end{theorem}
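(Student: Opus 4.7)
The plan is to convert the norm statement into a spectral-edge estimate. Writing $b_\ll := (\ll - a)(\ll - a)^*$, one has $\|(\ll - a)^{-1}\|^2 = \|b_\ll^{-1}\| = (\min \sigma(b_\ll))^{-1}$, so \eqref{eq main theorem} is equivalent to
$$ \min \sigma(b_\ll) \;\sim\; \frac{32}{27\,v(a)}\,(\ll-1)^3 \quad\text{as}\quad \ll \downarrow 1. $$
For $\mathscr{R}$-diagonal $a$ the spectral radius coincides with $\|a\|_2 = 1$, so $(\ll - a)^{-1}$ exists for $\ll > 1$ and this edge question is well-posed. I would first settle the circular case as a template, by purely complex-analytic means: for $a = c$ Voiculescu's circular operator with $\phi(cc^*) = 1$, Theorem \ref{thm circular R} supplies the $\mathscr{R}$-transform $\mathscr{R}_\ll$ of $b_\ll$ in closed form, and hence the $K$-function $K_\ll(w) = \mathscr{R}_\ll(w) + 1/w$. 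The left edge of $\supp \mu_{b_\ll}$ is $K_\ll(w_\ast)$ at the relevant real critical point of $K_\ll$, and Taylor-expanding this edge in $\ll - 1$ isolates the leading term $\tfrac{32}{27}(\ll - 1)^3$, which settles the theorem for $c$ (where $v(c) = 1$, since $cc^\ast$ is free Poisson of rate one with second moment two).

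For general $\mathscr{R}$-diagonal $a$, a direct $\mathscr{R}$-transform attack is intractable, and I would turn to the combinatorial side via the negative moments $m_k(\ll) := \phi(b_\ll^{-k})$. Using the polar decomposition $a = u|a|$ with $u$ Haar unitary free from $|a|$, one has $b_\ll = \ll^2 - \ll(u|a| + |a|u^\ast) + |a|^2$, and a resolvent expansion of $b_\ll^{-k}$ produces sums over non-crossing partitions weighted by the free cumulants of $|a|$. Theorem \ref{thm neg moments} packages this expansion and extracts the contributions that survive to leading order as $\ll \downarrow 1$, organized by the partition structure diagrams of Section \ref{sect Neg Moments}. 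What one needs to read off is that the leading $(\ll-1)$-power in $m_k(\ll)$ is universal in $a$, and that its coefficient depends on $a$ only through $v(a)$ in precisely the form prescribed by the circular baseline --- i.e., $m_k(\ll) \asymp C_k\,v(a)^k\,(\ll - 1)^{-3k + 3/2}$ for universal constants $C_k$ with $C_k^{1/k} \to 27/32$.

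Finally, I would recover the edge from the moments via $\min \sigma(b_\ll) = \lim_{k \to \infty} m_k(\ll)^{-1/k}$, matched against the circular case: the ratio $m_k(\ll)/m_k^{(c)}(\ll)$ converges, in the appropriate joint $k,\ll$ regime, to a simple function of $v(a)$, transferring the circular cubic edge to the general one with the factor $v(a)^{-1}$. The main obstacle is the universality claim at the heart of the combinatorial step: one must show that in the vast sum defining $m_k(\ll)$ the higher free cumulants of $|a|$ contribute only to subleading powers of $\ll - 1$, so that the leading behavior is pinned by $v(a)$ alone. This is exactly what the partition structure diagrams are designed to control, and is the reason the main theorem is universal. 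A secondary difficulty is the interchange of the limits $k \to \infty$ and $\ll \downarrow 1$ in the extraction step, which requires moment estimates on $m_k(\ll)$ that are tight in both parameters rather than merely fixed-$k$ asymptotics.
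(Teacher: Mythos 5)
Your circular-case template and your lower-bound mechanism are essentially the paper's: Section \ref{sect circular} carries out exactly the critical-point analysis of $K_{|\ll-c|^2}$ you describe (via Theorem \ref{thm circular R}), and Section \ref{sect PSD 2} derives the sharp lower bound of Equation \ref{eq main theorem} from Theorem \ref{thm neg moments} through the estimate of Equation \ref{eq norm moment est} together with $\sup_k (C^{(2)}_k)^{1/2k}=\frac{3}{2}\sqrt{3}$. Two of your quantitative statements are off, though fixable: the moment asymptotic is $\phi(|\ll-a|^{-2(k+1)})\sim C^{(2)}_k v(a)^k(\ll^2-1)^{-(3k+1)}$, so no half-integer powers of $(\ll-1)$ occur, and the relevant constant is $(C^{(2)}_k)^{1/2k}\to\frac{3}{2}\sqrt{3}$, not $C_k^{1/k}\to 27/32$; the $27/32$ only appears after combining with the factor coming from $\ll^2-1\approx 2(\ll-1)$.

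The genuine gap is the upper bound. You propose to recover $\inf\mathrm{spec}\,|\ll-a|^2$ as $\lim_{k\to\infty}m_k(\ll)^{-1/k}$ and then let $\ll\downarrow 1$, but Theorem \ref{thm neg moments} is a fixed-$k$ asymptotic with no uniformity in $k$: the diagrammatic expansion controls the leading power of $\frac{1}{\ll^2-1}$ for each $k$, while the subleading terms involve counts such as $\Pi_{k+1}(s,t)$ and higher cumulants of $a$ whose growth in $k$ is not estimated anywhere, by you or by the partition structure diagrams. Consequently the interchange of $k\to\infty$ with $\ll\downarrow 1$, and the claimed convergence of the ratios $m_k(\ll)/m_k^{(c)}(\ll)$ in a joint regime, is not a secondary difficulty but the missing heart of the argument; for each fixed $k$ the moment comparison yields only a lower bound (which is why the paper can extract the sharp lower bound this way), and the remark following Equation \ref{eq bad upper bound} explicitly notes that moment/Haagerup-inequality methods give an upper bound scaling with $\|a\|$ rather than $\sqrt{v(a)}$, so that an analytic argument is needed. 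The paper's proof of the upper (and lower) bound proceeds instead through the negative-root inversion formula of Proposition \ref{prop neg root}, the uniform convergence on compacts of the rescaled functions $F_\ll$ to $F(z)=z-v z^3$ (Proposition \ref{prop conv compact}), and the sign-variation Lemma \ref{lem sign variation}, which identify the edge of $\supp\mu_\ll$ exactly as $(\ll^2-1)^{3/2}F_\ll(x_\ll)$ and hence give $\|(\ll-a)^{-1}\|=[(\ll^2-1)^{3/2}F_\ll(x_\ll)]^{-1}$ for $\ll$ near $1$. To complete your route you would need uniform-in-$k$ two-sided bounds on $m_k(\ll)$ (a substantial strengthening of Theorem \ref{thm neg moments}), or else adopt such a scaling-limit/analytic-continuation argument for the upper edge.
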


In proving Theorem \ref{main theorem}, we develop several auxiliary results of independent interest.  The special case that $a$ is Voiculescu's circular element $c$ affords an example where non-asymptotic calculations may be done completely explicitly.  In that case, we prove the following result on page \pageref{theorem circular R page}, which (as we show) can be used to prove this special case of the main theorem.

\begin{theoremRc}  Let $c$ be a circular operator of unit variance, and let $\ll\in\R$.  Then
\[ \mathscr{R}_{|\ll-c|^2}(z) = \frac{1}{1-z} + \frac{\ll^2}{(1-z)^2}. \]
\end{theoremRc}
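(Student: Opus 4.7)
The plan is to compute the free cumulants of $|\lambda-c|^2$ directly and to verify $\kappa_n(|\lambda-c|^2) = 1 + n\lambda^2$ for every $n \geq 1$. Since
\[
\mathscr{R}_{|\lambda-c|^2}(z) = \sum_{n \geq 1} \kappa_n(|\lambda-c|^2)\,z^{n-1},
\]
this will immediately yield
\[
\mathscr{R}_{|\lambda-c|^2}(z) = \sum_{n \geq 1}(1 + n\lambda^2)\,z^{n-1} = \frac{1}{1-z} + \frac{\lambda^2}{(1-z)^2}.
\]

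The first step is to apply the product formula for free cumulants to $|\lambda-c|^2 = (\lambda - c^*)(\lambda - c)$:
\[
\kappa_n(|\lambda-c|^2,\ldots,|\lambda-c|^2) = \sum_{\substack{\pi \in NC(2n)\\ \pi \vee \sigma_n = 1_{2n}}} \kappa_\pi[\lambda-c^*,\,\lambda-c,\,\ldots,\,\lambda-c^*,\,\lambda-c],
\]
where $\sigma_n = \{\{1,2\},\{3,4\},\ldots,\{2n-1,2n\}\}$. Expanding by multilinearity, three classical facts sharply restrict the surviving terms: a cumulant of order $\geq 2$ vanishes whenever a scalar argument appears; $\phi(c) = \phi(c^*) = 0$, so each singleton of $\pi$ contributes exactly a factor of $\lambda$; and since $c$ is circular, the only nonvanishing free cumulants of $c$ and $c^*$ are $\kappa_2(c,c^*) = \kappa_2(c^*,c) = 1$. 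Hence every surviving block of $\pi$ is either a singleton (contributing $\lambda$) or a pair matching one odd position (a $c^*$-slot) to one even position (a $c$-slot), contributing $1$. Writing $p$ for the number of pairs, we obtain
\[
\kappa_n(|\lambda-c|^2,\ldots,|\lambda-c|^2) = \sum_{p=0}^n N_{n,p}\,\lambda^{2(n-p)},
\]
where $N_{n,p}$ counts the non-crossing partitions of $[2n]$ consisting of exactly $p$ odd-even pairs and $2(n-p)$ singletons with $\pi \vee \sigma_n = 1_{2n}$.

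The combinatorial core of the proof is then to show $N_{n,n} = 1$, $N_{n,n-1} = n$, and $N_{n,p} = 0$ for $p \leq n-2$. The vanishing for $p \leq n-2$ is immediate, as singletons contribute nothing to the join, and each pair merges at most two of the $n$ blocks of $\sigma_n$, so at least $n-1$ pairs are needed to produce a single connected block. The case $p = n$ asks for the number of non-crossing alternating pair partitions of $[2n]$ satisfying the connectedness condition; the standard computation underlying $\kappa_n(c^*c) = 1$ (equivalently $\mathscr{R}_{c^*c}(z) = 1/(1-z)$) identifies the unique such partition as the ``rainbow''
\[
\pi_* = \bigl\{\{1,2n\},\{2,3\},\{4,5\},\ldots,\{2n-2,2n-1\}\bigr\}.
\]
For $p = n-1$, I would establish a bijection between valid configurations and the $n$ pairs of $\pi_*$: removing any single pair from $\pi_*$ produces a valid configuration, and conversely every valid $p = n-1$ configuration can be completed to $\pi_*$ by inserting one odd-even pair between its two singleton elements.

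The main obstacle is the converse in the $p = n-1$ bijection: one must show that in any valid configuration the two singletons have opposite parities, and that reinserting the pair between them recovers a non-crossing alternating pair partition that coincides with $\pi_*$. The natural route is structural: the $n-1$ pair-edges necessarily form a spanning tree on the $n$ blocks of $\sigma_n$, and since each block contains only one odd and one even slot, no block is incident to more than two pair-edges, so the tree must be a Hamiltonian path. A case analysis (or a short induction that strips a leaf of the path and the unique pair-edge incident to it) then shows that the non-crossing and slot-alternation constraints force the admissible Hamiltonian paths to be exactly those obtained from the cyclic structure of $\pi_*$ by deletion of one edge. Once $N_{n,n} = 1$ and $N_{n,n-1} = n$ are confirmed, the identity $\kappa_n(|\lambda-c|^2) = 1 + n\lambda^2$ follows, and summing the $\mathscr{R}$-transform series delivers the stated formula.
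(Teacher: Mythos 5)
Your proposal is correct and follows essentially the same combinatorial route as the paper: it rests on the product-cumulant formula (Theorem \ref{thm prod cumulant}) together with the fact that the only surviving cumulants of $c$ are $\kappa_2[c,c^\ast]=\kappa_2[c^\ast,c]=1$, and it reduces $\kappa_n(|\lambda-c|^2)$ to exactly the diagram count the paper performs --- one ``rainbow'' configuration contributing $1$ and $n$ configurations with two adjacent scalar slots each contributing $\lambda^2$ --- differing only in bookkeeping (you keep the affine factors $\lambda-c^\ast$, $\lambda-c$ and let singleton blocks absorb the scalars, whereas the paper first splits $|\lambda-c|^2=\lambda^2+\alpha_1+\alpha_2$ and tracks strings of $\alpha_1$'s and $\alpha_2$'s). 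Be aware that the step you defer --- showing every valid configuration with $n-1$ pairs is the rainbow minus one pair, in particular that the two singletons are not separated by a pair so the completion stays non-crossing --- is precisely the content of the paper's Lemma \ref{lem adjacent 1s} and is where the real work lies; your Hamiltonian-path plus leaf-stripping plan does go through, but it must actually be carried out rather than asserted.
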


Theorem \ref{thm circular R} is proved following its statement via combinatorial techniques; we reprove it in Section \ref{sect analytic approach to R} using the analytic techniques developed there.  We go on to use that analysis to calculate, to leading order, the negative moments of the operator $|\ll-a|^2$ for any $\mathscr{R}$-diagonal $a$.  The result, which appears on page \pageref{theorem neg moments page}, follows.

\begin{theoremNegmom} Let $k$ be a non-negative integer, and let $a$ satisfy the conditions of Theorem \ref{main theorem}. Then as $\ll\downarrow 1$,
\[ \phi(|\ll-a|^{-2(k+1)}) \sim C^{(2)}_k\frac{v(a)^k}{(\ll^2-1)^{3k+1}}, \]
where $C^{(2)}_k = \frac{1}{2k+1}\binom{3k}{k}$ are the (type $2$) Fuss-Catalan numbers.
\end{theoremNegmom}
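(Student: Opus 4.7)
My plan is to extract the asymptotic from the Taylor expansion of the Cauchy transform $G_\lambda := G_{|\lambda-a|^2}$ at $z=0$ via Lagrange inversion of its functional inverse $K_\lambda(w) := \mathscr{R}_{|\lambda-a|^2}(w) + 1/w$, reducing everything at leading order in $\epsilon := \lambda^2 - 1$ to the ternary-tree equation $F(x) = 1 + xF(x)^3$ satisfied by the Fuss--Catalan generating function $F(x) = \sum_{k\geq 0} C^{(2)}_k x^k$. Since $a$ is $\mathscr{R}$-diagonal with $\phi(aa^\ast)=1$, its Brown measure sits in the closed unit disc, so $|\lambda-a|^2$ is bounded below away from $0$ for $\lambda > 1$ and $G_\lambda$ is analytic at the origin; writing $G_\lambda(z) = \sum_{k\geq 0} g_k(\lambda) z^k$, the identity $\phi(|\lambda-a|^{-2(k+1)}) = -g_k(\lambda)$ reduces the theorem to proving $g_k(\lambda) \sim -C^{(2)}_k v(a)^k/\epsilon^{3k+1}$.

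\medskip

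For the circular case, Theorem~\ref{thm circular R} gives $K_\lambda(w) = 1/w + 1/(1-w) + \lambda^2/(1-w)^2$, and the basepoint equation $K_\lambda(w_0)=0$ reduces (after clearing denominators) to the linear equation $1 + \epsilon w_0 = 0$, yielding $w_0 = -1/\epsilon$ exactly. A direct calculation (using $\lambda^2 = 1+\epsilon$) exhibits a clean cancellation of the $\epsilon^{j+1}$ and $\epsilon^{j+2}$ terms in $K_\lambda^{(j)}(w_0)$, leaving $K_\lambda^{(j)}(w_0) = -\frac{1}{2}j!(j+1)j\,\epsilon^{j+3}(1 + O(\epsilon))$. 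With the scaling $\eta := \epsilon(G_\lambda(z) - w_0)$, the equation $K_\lambda(w_0 + \eta/\epsilon) = z$ collapses at leading order to $\eta/(1-\eta)^3 = -z/\epsilon^3$ (via the identity $\sum_{j\geq 1}\binom{j+1}{2}\eta^j = \eta/(1-\eta)^3$), and Lagrange inversion against $F = 1 + xF^3$ yields $\eta = \sum_{k\geq 1}(-1)^{k+1} C^{(2)}_k (z/\epsilon^3)^k$, hence $g_k = -C^{(2)}_k/\epsilon^{3k+1}$. This confirms the theorem for $a = c$ (where $v(c) = 1$).

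\medskip

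For general $\mathscr{R}$-diagonal $a$ I would use the analytic framework of Section~\ref{sect analytic approach to R} to produce $K_\lambda$ in terms of the distribution of $|a|$. The basepoint computation should still give $w_0(\lambda) \sim -1/\epsilon$, so that $\phi(|\lambda-a|^{-2}) \sim 1/\epsilon = C^{(2)}_0 v(a)^0/\epsilon$ universally; the key universality claim is that at leading order in $\epsilon$,
\[
K_\lambda\Bigl(w_0 + \frac{\eta}{\epsilon}\Bigr) \sim -\frac{\epsilon^3}{v(a)}\,\frac{\eta}{(1-\eta)^3},
\]
with higher $\ast$-moments of $a$ contributing only subleading corrections. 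Granting this, the inversion equation becomes $\eta/(1-\eta)^3 = -v(a) z/\epsilon^3$, and the Lagrange argument of the circular step immediately delivers $g_k = -C^{(2)}_k v(a)^k/\epsilon^{3k+1}$ as required.

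\medskip

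The hard step is the universality: that only the fourth moment $v(a) = \phi(|a|^4) - 1$ controls the leading $\epsilon$-behaviour of $K_\lambda$ near $w_0$, with all higher $\ast$-moments of $a$ irrelevant to leading order. Analytically this should come from tracking the location of the branch point of $G_\lambda$ as $\lambda \downarrow 1$ and observing that the dominant balance is pinned down by $\phi(|a|^2) = 1$ and $\phi(|a|^4) = 1 + v(a)$ alone, via the value and first derivative of the $S$-transform of $|a|^2$ at a suitable point. Combinatorially, the partition-structure-diagram calculus introduced in Section~\ref{sect Neg Moments} appears ideal: one expands $\phi((|\lambda-a|^2)^{-(k+1)})$ as a sum indexed by such diagrams, weighted by the alternating free cumulants $\alpha_n(a)$, and shows that only the ternary-tree diagrams using $\alpha_2 = v(a) - 1$ survive to leading order, counted precisely by $C^{(2)}_k$.
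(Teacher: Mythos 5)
Your circular-case computation is essentially sound: the basepoint $w_0=-1/\epsilon$, the cancellation giving $K_\ll^{(j)}(w_0)=-\tfrac12 j!\,j(j+1)\,\epsilon^{j+3}(1+O(\epsilon))$, and the resulting equation $\eta/(1-\eta)^3=-z/\epsilon^3$ all check out (though your series for $\eta$ should have all coefficients negative, $\eta=-\sum_{k\ge1}C^{(2)}_k(z/\epsilon^3)^k$, not alternating signs — alternating signs would make the odd negative moments negative, which is impossible for a positive operator; your final $g_k=-C^{(2)}_k/\epsilon^{3k+1}$ is nevertheless the correct conclusion). You also glide past one analytic point even here: the relation $G_\ll(K_\ll(w))=w$ is guaranteed by Equation \ref{eq G-R functional eqn} only for small $|w|$, whereas your basepoint $w_0\sim-1/\epsilon$ is large; in the circular case this is rescued by the explicit analysis of Section \ref{sect circular support} (the inverse of $K_m$ on $(z^-,0)$ agrees with $G_m$ by analytic continuation), but it is not automatic.

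The genuine gap is the general case, which is the actual content of the theorem. Your ``key universality claim'' $K_\ll(w_0+\eta/\epsilon)\sim-\frac{\epsilon^3}{v(a)}\frac{\eta}{(1-\eta)^3}$ is precisely the statement to be proved, and you only assert it, deferring to an unspecified $S$-transform/branch-point argument or to the partition-structure-diagram calculus (which, in the paper's Section \ref{sect PSD 1}, itself requires the tiling enumeration $\Pi_{k+1}(3k+1,t)=\binom{k}{t}C^{(2)}_k$ and the subleading-order estimate for diagrams containing $6$-gons or larger — none of which you carry out). Moreover, for general $\mathscr{R}$-diagonal $a$ there is no closed formula for $\mathscr{R}_{|\ll-a|^2}$, so controlling $K_\ll$ near the large basepoint $w_0\sim-1/\epsilon$ is exactly the obstruction the paper flags at the start of Section \ref{sect general case}. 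The paper sidesteps both difficulties at once by working with the symmetrization $\mu_\ll=\tilde\mu_{|\ll-a|}$: Proposition \ref{prop R transform sum} gives $\mathscr{R}_{\mu_\ll}$ explicitly as $\mathscr{R}_\mu(z)$ plus a known function, Proposition \ref{prop neg root} (proved via the Haagerup--Schultz functions $h,h_\ll$, choosing the \emph{negative} square-root branch) shows the inversion $G_{\mu_\ll}(B_\ll(z))=z$ is valid in an honest neighbourhood of $0$, and then the rescaling $F_\ll(z)=-(\ll^2-1)^{-3/2}B_\ll((\ll^2-1)^{1/2}z)$ makes every cumulant beyond $\kk_4(\mu)$ enter with a positive power of $\ll^2-1$, so $F_\ll\to z-v(a)z^3$ and the Fuss--Catalan asymptotics drop out of Lagrange inversion; the universality in $v(a)$ is automatic rather than needing a separate argument. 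To complete your route you would need either to prove your universality estimate together with the domain-of-validity statement for $K_\ll$ at $w_0$, or to switch to the symmetrized-measure framework as the paper does.
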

We give two different proofs of this theorem: one complex analytic, in Section \ref{sect general case}, and the other combinatorial, in Section \ref{sect PSD 1}.  Theorem \ref{thm neg moments} by itself yields the sharp lower bound of Theorem \ref{main theorem}, as detailed in Section \ref{sect PSD 2}; the analytic techniques of Section \ref{sect general case} extend to prove this bound is also sharp from above.  In addition, our combinatorial approach demonstrates that the negative moments are in fact polynomials in appropriate quantities.  The theorem, appearing on page \pageref{thm moment polynomial page}, is as follows.

\begin{theoremmomentpoly}  Let $k\ge 0$, and let $a$ satisfy the conditions of Theorem \ref{main theorem}.  Then there is a {\em polynomial} $P_{k+1}^a$ in two variables so that
\[ \phi(|\ll-a|^{-2(k+1)}) = P_{k+1}^a\left(\frac{1}{\ll^2-1},\frac{1}{\ll^2}\right). \]
for $\ll>1$.
\end{theoremmomentpoly}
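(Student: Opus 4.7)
The plan is to derive the polynomial structure directly from a formal power series expansion organized by the partition structure diagrams (PSDs) introduced in Section \ref{sect Neg Moments}. For $\ll$ sufficiently large (specifically $\ll > \|a\|$), both resolvents admit convergent Neumann series
\[(\ll - a)^{-1} = \sum_{n \geq 0} \ll^{-n-1} a^n, \qquad (\ll - a^*)^{-1} = \sum_{n \geq 0} \ll^{-n-1}(a^*)^n,\]
and since $\ll$ is real, $|\ll-a|^{-2(k+1)} = \bigl((\ll-a)^{-1}(\ll-a^*)^{-1}\bigr)^{k+1}$. Expanding the product and applying the trace yields
\[\phi\bigl(|\ll-a|^{-2(k+1)}\bigr) = \ll^{-2(k+1)} \sum_{\mathbf{n} \in \N^{2(k+1)}} \ll^{-|\mathbf{n}|}\, \phi\bigl(a^{n_1}(a^*)^{n_2}\cdots a^{n_{2k+1}}(a^*)^{n_{2k+2}}\bigr),\]
where $|\mathbf{n}| = n_1 + \cdots + n_{2k+2}$.

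Next, I apply the moment-cumulant formula to each $*$-moment: it becomes a sum over non-crossing partitions of the $|\mathbf{n}|$ positions, weighted by products of free cumulants. Because $a$ is $\mathscr{R}$-diagonal, only partitions whose blocks are of even size $2m$ with strictly alternating $(a,a^*)$ pattern contribute, each such block yielding a factor of the determining cumulant $\alpha_m = \kappa_{2m}(a,a^*,\ldots,a,a^*)$. Interchanging the sums and grouping partitions by their combinatorial type produces
\[\phi\bigl(|\ll-a|^{-2(k+1)}\bigr) = \ll^{-2(k+1)} \sum_{\Pi} w(\Pi)\, S_\Pi(\ll),\]
where $\Pi$ ranges over PSDs, $w(\Pi) = \prod_{B \in \Pi} \alpha_{|B|/2}$, and $S_\Pi(\ll) = \sum_{\mathbf{n} \text{ compatible with }\Pi} \ll^{-|\mathbf{n}|}$.

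The third step is to evaluate each $S_\Pi(\ll)$. The compatibility constraints encoded by $\Pi$ are linear equalities on $\mathbf{n}$, and the remaining free parameters range independently over $\N$, so $S_\Pi(\ll)$ factors as a finite product of geometric series of the form $\sum_{n \geq 0}\ll^{-2n} = 1 + 1/(\ll^2-1)$. Each such factor is already a polynomial in $x = 1/(\ll^2-1)$ and $y = 1/\ll^2$, and the prefactor $\ll^{-2(k+1)} = y^{k+1}$ is polynomial in $y$, so each PSD contributes a polynomial in $x$ and $y$. Provided the sum over PSDs is finite, the total $P_{k+1}^a(x,y)$ is polynomial as claimed; analytic continuation (both sides are real-analytic on $(1,\infty)$) extends the identity from $\ll > \|a\|$ to all $\ll > 1$.

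The main obstacle is proving that only finitely many PSDs contribute for fixed $k$. Naively, non-crossing partitions of words of arbitrary length form an infinite set, but the PSD equivalence collapses them to finitely many combinatorial types: once the alternation and non-crossing constraints are imposed on a word shaped by $2(k+1)$ factors, the shape of an admissible partition is determined up to finitely many choices, with the $n_i$ acting only as lengths of \emph{free runs} within a bounded structural template. This finiteness is exactly what the PSD formalism of Section \ref{sect Neg Moments} is designed to encode, and Theorem \ref{thm neg moments} may be read as extracting the leading-order PSD (the one maximizing the power of $1/(\ll^2-1)$) from the very same finite sum.
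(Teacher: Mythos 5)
Your proposal is correct and follows essentially the same route as the paper: expand the resolvents geometrically, apply the $\mathscr{R}$-diagonal moment--cumulant formula, compress the non-crossing partitions into the finitely many partition structure diagrams of Section \ref{sect PSD 1}, and factor the sums over nested-pair multiplicities into geometric series, exactly as in Equation \ref{eq full expansion}. The one point to make explicit is that your weight $w(\Pi)$ is independent of those multiplicities only because of the normalization $\kappa_2[a,a^\ast]=\|a\|_2^2=1$; with that noted, your argument coincides with the paper's.
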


The proof of Theorem \ref{thm moment polynomial} led to the development of a new class of combinatorial objects we call {\em partition structure diagrams}, introduced in Section \ref{sect PSD 1}.

\subsection{Background}

Following is a brief description of those results and techniques from both the complex analytic and combinatorial sides of free probability theory that we use in this paper.  They are here largely as a means to fix notation.  The reader is directed to the papers \cite{Haagerup Larsen,Haagerup Schultz,Kemp 2,Kemp Speicher} and the book \cite{Nica Speicher Book} for further reading.

\medskip

The arena for all of what follows is a $\mathrm{II}_1$-factor $\mathscr{A}$ with trace $\phi$.  Operators in $\mathscr{A}$ are {\em non-commutative random variables}.  If $x\in\mathscr{A}$ is self-adjoint, it has a spectral resolution $E^x$ whose projections are in $\mathscr{A}$; the measure $\mu_x = \phi\circ E^x$ is the {\em distribution}  or {\em spectral measure} of $x$.  Equivalently, $\mu_x$ is the unique probability measure on $\R$ whose moments $\int t^n \,\mu_x(dt)$ are given by the moments $\phi(x^n)$ for $n\in\N$.  Even if $x\in\mathscr{A}$ is not self-adjoint, we therefore refer to the collection of all $\phi$-moments of monomials in $x$ and $x^\ast$ as the {\em distribution} of $x$.

\medskip

Given a probability measure $\mu$ on $\R$, its {\em Cauchy transform} $G_\mu$ is the analytic function defined in the upper half-plane $\C_+$ by

\begin{equation} \label{eq Cauchy transform} G_\mu(z) = \int_{\R} \frac{1}{z-t}\,\mu(dt), \quad z\in\C_+. \end{equation}
The {\em $\mathscr{R}$-transform} of the measure, $\mathscr{R}_\mu$, is the analytic function defined in a neighbourhood of $0$ determined by the functional equation

\begin{equation} \label{eq G-R functional eqn} G_\mu(\mathscr{R}_\mu(z) + 1/z) = z, \quad z \in \C_+, \quad |z|\text{ small}.\end{equation}

\medskip

\noindent For a known $\mathscr{R}$-transform $\mathscr{R}_\mu$, Equation \ref{eq G-R functional eqn} in fact determines $G_\mu$ on a sector in $\C_+$, and thence on all of $\C_+$ by analytic continuation, modulo the asymptotic restriction that $\lim_{|z|\to\infty} zG_\mu(z) = 1$.  This relationship shows that the measure $\mu$ can be recovered from its $\mathscr{R}$-transform, via the {\em Stieltjes inversion formula}:

\begin{equation} \label{eq Stieltjes inversion} \mu(dt) =  -\frac{1}{\pi}\lim_{\e\downarrow 0}\Im G_\mu(t+i\e)\,dt. \end{equation}

\noindent Equation \ref{eq Stieltjes inversion} should be interpreted in weak form (that $\int f(t)\,\mu(dt) = -\frac{1}{\pi} \lim_{\e\downarrow 0} \int f(t) \Im G_\mu(t+i\e)\,dt$ for $f\in C_c(\R)$) in general, but in the case that $\mu$ has a density $\rho$ with respect to Lebesgue measure $\mu(dt) = \rho(t)\,dt$, Equation \ref{eq Stieltjes inversion} yields $\rho(t)$ as the limit on the right-hand-side.

\medskip

Given $x,y\in\mathscr{A}$ self-adjoint, they are called {\em free} if $\mathscr{R}_{\mu_{x+y}} = \mathscr{R}_{\mu_x}+\mathscr{R}_{\mu_y}$.  Freeness can be written in other more combinatorial forms by considering the additivity of $\mathscr{R}$-transforms as a collection of statements about Taylor coefficients.  It is easy to verify that $\mathscr{R}_\mu(0) = 0$ for any measure $\mu$, and so in general we have
\[ \mathscr{R}_\mu(z) = \kk_1(\mu) + \kk_2(\mu)\,z + \kk_3(\mu)\,z^2 + \cdots \]
for some scalars $\kk_n(\mu)$ called the {\em free cumulants} of $\mu$.  Thinking of $\mathscr{R}$ and $\kk_n$ indexed by a random variable rather than its distribution, we can polarize and express $\kk_n$ as an $n$-linear functional: $\kk_n[x_1,\ldots,x_n]$, where $\kk_n[x,\ldots,x] = \kk_n(\mu_x)$.  In this language, freeness can be stated thus: random variables are free if all their mixed free cumulants vanish.  This parallels the classical connection between independence of random variables and their classical cumulants (also known as semi-invariants).  It also provides an extension of the notion of freeness to any collection of (not necessarily self-adjoint) random variables.

\medskip

The relationship between moments and free cumulants is given by the {\em moment cumulant formula}:
\begin{equation} \label{eq moment cumulant} \phi(x_1\cdots x_n) = \sum_{\pi\in NC(n)} \kk_{\pi}[x_1,\ldots,x_n].
\end{equation}
Here $NC(n)$ denotes the lattice of {\em non-crossing partitions} of the ordered set $\{1,\ldots,n\}$.  Given a partition $\pi = \{B_1,\ldots,B_r\}$ (with $B_j\subseteq\{1,\ldots,n\}$), the quantity $\kk_{\pi}[x_1,\ldots,x_n]$ is equal to the product of the $r$ terms $\kk(B_j)[x_1,\ldots,x_n]$, where if $B = \{i_1,\ldots,i_m\}$ then $\kk(B)[x_1,\ldots,x_n] = \kk_m[x_{i_1},\ldots,x_{i_m}]$.  For example, if $\pi = \left\{\{1,4,5\},\{2,3\}\right\}$, then $\kk_\pi[x_1,\ldots,x_5] = \kk_3[x_1,x_4,x_5]\,\kk_2[x_2,x_3]$.

\medskip

\label{R-diagonal page}
An non-commutative random variable $a\in\mathscr{A}$ is called {\bf $\mathscr{R}$-diagonal} if, among all mixed free cumulants in $a,a^\ast$, the only non-zero ones are among
\[ \kk_{2n}[a,a^\ast,\ldots,a,a^\ast],\quad \kk_{2n}[a^\ast,a,\ldots,a^\ast,a], \]
for some positive integer $n$.  Prominent examples of $\mathscr{R}$-diagonal operators are Haar unitary operators and Voiculescu's circular operator $c$, often represented in the form $c = \frac{1}{\sqrt{2}}(s + i s')$ where $s,s'$ are free semicircular random variables --- self-adjoint operators with distribution $\mu_{s}(dt) = \mu_{s'}(dt) = \frac{1}{2\pi}\sqrt{4-t^2}\1_{\{|t|\le 2\}}$).  The class of $\mathscr{R}$-diagonal operators is closed under free sum and product and taking powers, and for any compactly-supported probability measure $\mu$ on $\R_+$ there is an $\mathscr{R}$-diagonal operator $a$ with $\mu_{aa^\ast} = \mu$.

\medskip

The general moment--cumulant formula takes a special form in the case of $\mathscr{R}$-diagonal operators.  Let $a$ be $\mathscr{R}$-diagonal, and consider any monomial in $a,a^\ast$: $a^{\ast n_1} a^{m_1}\cdots a^{\ast n_k} a^{m_k}$ for $n_1,\ldots,m_k$ non-negative integers.  The following formula is a consequence of the definition of $\mathscr{R}$-diagonality, and is proved in \cite{Kemp 2}.

\begin{equation} \label{eq R-diag moment cumulant}
\phi(a^{\ast n_1} a^{m_1}\cdots a^{\ast n_k} a^{m_k}) = \sum_{\pi\in NC(n_1,m_1,\ldots,n_k,m_k)} \kk_{\pi}[a^{,n_1},a^{\ast,m_1},\ldots,a^{,n_k},a^{\ast,m_k}].
\end{equation}
Here $NC(n_1,m_2,\ldots,n_k,m_k)$ denotes the set of non-crossing partitions $\pi$ of the list of length $n_1+m_1+\cdots+n_k+m_k$ with the property that each block of $\pi$ alternately connects $a$s and $a^\ast$s.  Commas have been added in the exponents of the cumulants to emphasize that the arguments are {\em not products}.  The set of pairings with this property is denoted $NC_2(n_1,m_1,\ldots,n_k,m_k)$.

\begin{figure}[htbp]
\begin{center}
\input{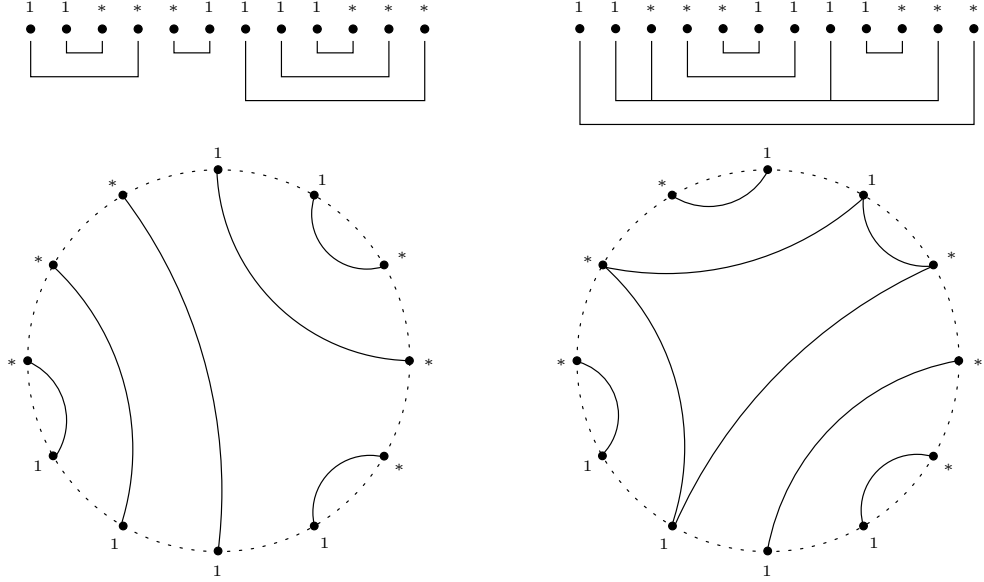}
\caption{\label{fig R-diag partitions} Two non-crossing partitions in $NC(2,3,4,3)$, represented in linear format (top) and on the disc (bottom); the latter representation will be more useful in Section \ref{sect PSD 1}.  The first partition is a pairing in $NC_2(2,3,4,3)$.}
\end{center}
\end{figure}

In fact, it {\em is} sometimes useful to consider cumulants with products as arguments (thus the need for the commas above to distinguish).  The following theorem (Theorem 11.12 in \cite{Nica Speicher Book}) is a powerful computational tool we will use in Section \ref{sect circular}.

\begin{theorem} \label{thm prod cumulant} Let $\mx{i}=(i_1,\ldots,i_n)$ be an $n$-tuple of natural numbers, and let $a_1,\ldots, a_{|\mx{i}|}$ be random variables in a non-commutative probability space.  Consider the products
\[ A_1 = a_1\cdots a_{i_1} \quad A_2 = a_{i_1+1}\cdots a_{i_1+i_2} \quad \ldots \quad A_n = a_{i_1+\cdots+i_{n-1}+1}\cdots a_{i_1+\cdots+i_n}. \]
The free cumulants of these product variables are given in terms of the free cumulants of the $a_j$ themselves by
\begin{equation} \label{eq prod cumulant}
\kk_n[A_1,\ldots,A_n] = \sum_{\pi\in NC(|\mx{i}|)\atop \pi\vee \widehat{0_n} = 1_{|\mx{i}|}} \kk_\pi[a_1,\ldots,a_{|\mx{i}|}],
\end{equation}
where $\widehat{0_n}$ is the partition whose blocks are the intervals $\{1,\ldots,i_1\}, \{i_1+1,\ldots,i_1+i_2\}, \ldots, \{i_1+\cdots +i_{n-1}+1,\ldots,i_1+\cdots+i_n\}$.
\end{theorem}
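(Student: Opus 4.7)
The plan is to derive the formula by combining Möbius inversion on the lattice $NC(n)$ with the moment--cumulant formula (\ref{eq moment cumulant}) applied at both ``scales'' ($n$ and $|\mx{i}|$). First, Möbius inversion of (\ref{eq moment cumulant}) gives
\[ \kk_n[A_1,\ldots,A_n] = \sum_{\sigma\in NC(n)} \mu(\sigma, 1_n)\,\phi_\sigma[A_1,\ldots,A_n], \]
where $\phi_\sigma$ denotes the product of traces over the blocks of $\sigma$. Since each $A_j$ is itself a product of consecutive $a$'s, the quantity $\phi_\sigma[A_1,\ldots,A_n]$ coincides with $\phi_{\widehat\sigma}[a_1,\ldots,a_{|\mx{i}|}]$, where $\widehat\sigma\in NC(|\mx{i}|)$ is obtained by blowing up each index $j\in\{1,\ldots,n\}$ into the corresponding consecutive interval of indices that makes up $A_j$. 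Applying (\ref{eq moment cumulant}) once more expands each $\phi_{\widehat\sigma}$ as a sum of $\kk_\pi[a_1,\ldots,a_{|\mx{i}|}]$ over $\pi\in NC(|\mx{i}|)$ with $\pi\le\widehat\sigma$.

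Substituting and exchanging the order of summation then gives
\[ \kk_n[A_1,\ldots,A_n] \;=\; \sum_{\pi\in NC(|\mx{i}|)} \kk_\pi[a_1,\ldots,a_{|\mx{i}|}]\;\sum_{\sigma\in NC(n),\,\widehat\sigma\ge\pi} \mu(\sigma,1_n). \]
The key structural fact is that the blow-up map $\sigma\mapsto\widehat\sigma$ is an order isomorphism from $NC(n)$ onto the interval $[\widehat{0_n},1_{|\mx{i}|}]\subset NC(|\mx{i}|)$; non-crossingness is preserved because the intervals being blown up are consecutive. Because $\widehat\sigma\ge\widehat{0_n}$ always, the condition $\widehat\sigma\ge\pi$ is equivalent to $\widehat\sigma\ge\pi\vee\widehat{0_n}$, which via the isomorphism is equivalent to $\sigma\ge\tau$ for the unique $\tau\in NC(n)$ corresponding to $\pi\vee\widehat{0_n}$. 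The defining identity $\sum_{\sigma\ge\tau}\mu(\sigma,1_n)=\delta_{\tau,1_n}$ of the Möbius function on $NC(n)$ then kills every term in the inner sum except when $\tau=1_n$, i.e.\ when $\pi\vee\widehat{0_n}=1_{|\mx{i}|}$, producing exactly (\ref{eq prod cumulant}).

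The main technical point I expect to have to handle carefully is the verification that $\sigma\mapsto\widehat\sigma$ really is an order isomorphism $NC(n)\cong[\widehat{0_n},1_{|\mx{i}|}]$: one must check that every $\tau\in NC(|\mx{i}|)$ with $\tau\ge\widehat{0_n}$ is of the form $\widehat\sigma$ for a unique non-crossing $\sigma$, and that the non-crossing join $\pi\vee\widehat{0_n}$ taken inside $NC(|\mx{i}|)$ (rather than in the full partition lattice) gives the correct object. Once this lattice bookkeeping is pinned down and the two applications of the moment--cumulant formula are matched up, the formula reduces to the standard Möbius inversion identity on $NC(n)$.
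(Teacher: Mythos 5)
Your proof is correct. The paper does not give a proof of Theorem \ref{thm prod cumulant} at all --- it quotes it as Theorem 11.12 of \cite{Nica Speicher Book} --- and your argument (M\"obius inversion at the level of $NC(n)$, the identification $\phi_\sigma[A_1,\ldots,A_n]=\phi_{\widehat\sigma}[a_1,\ldots,a_{|\mx{i}|}]$, the order isomorphism $NC(n)\cong[\widehat{0_n},1_{|\mx{i}|}]$, and the vanishing of the partial M\"obius sums $\sum_{\sigma\ge\tau}\mu(\sigma,1_n)$ unless $\pi\vee\widehat{0_n}=1_{|\mx{i}|}$) is essentially the standard proof found in that reference; since the paper explicitly takes $\vee$ to mean the join in $NC(|\mx{i}|)$, your use of the non-crossing join matches the stated condition exactly.
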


\begin{remark} \label{rk expand list} For notational convenience, we will express the relationship between the tuples $\mx{A} = [A_1,\ldots,A_n]$ and $\mx{a} = [a_1,\ldots,a_{\mx{i}}]$ in Equation \ref{eq prod cumulant} by $\mx{a} = \widehat{\mx{A}}$. For example, if $\mx{i}=(2,1,2,1)$ then $\mx{A} = [a_1a_2, a_3, a_4a_5, a_6]$ and so $\widehat{\mx{A}} = [a_1,a_2,a_3,a_4,a_5,a_6]$.  The use of the notation is in summations over $\mx{i}$, where it is cumbersome to explicitly enumerate the break-points between products. \end{remark}

\noindent The $\vee$ in Equation \ref{eq prod cumulant} denotes the join in the lattice $NC(|\mx{i}|)$. The meaning of the condition $\pi\vee\widehat{0_n} = 1_{|\mx{i}|}$ is as follows: the blocks of $\pi$ must connect the blocks of $\widehat{0_n}$.  To be precise: given any two points $p,q$ in $\{1,\ldots,|\mx{i}|\}$, there must be a path $p=p_1\sim_{\s_1} p_2 \sim_{\s_2} \cdots \sim_{\s_{r-1}} p_r = q$ where $\s_j \in\{\pi,\widehat{0_n}\}$ for $j=1\ldots r-1$.  Indeed, the sequence $\s_j$ can be chosen to alternate between $\pi$ and $\widehat{0_n}$.  Figure \ref{fig connect} gives examples.

\begin{figure}[htbp]
\begin{center}
\unitlength=4.5pt
    \begin{picture}(95, 15)(0,-12)
    \thinlines
    \gasset{Nw=1, Nh=1, Nmr=1,Nframe=n,Nfill=y, Nmarks=n, ExtNL=y, NLdist=2,AHnb=0}
    \node(s1)(0,0){}
    \node(s2)(5,0){}
    \drawrect[dash={0.5 1}0,Nfill=n,Nframe=y](-2,-2,7,2)
    \node(s3)(10,0){}
    \drawrect[dash={0.5 1}0,Nfill=n,Nframe=y](8,-2,12,2)
    \node(s4)(15,0){}
    \node(s5)(20,0){}
    \drawrect[dash={0.5 1}0,Nfill=n,Nframe=y](13,-2,22,2)
    \node(s6)(25,0){}
    \drawrect[dash={0.5 1}0,Nfill=n,Nframe=y](23,-2,27,2)
    
    \drawline[linewidth=0.2](0,-2.8)(0,-8)(20,-8)(20,-2.8)(20,-8)(25,-8)(25,-2.8)
    \drawline[linewidth=0.2](5,-2.8)(5,-6)(10,-6)(10,-2.8)(10,-6)(15,-6)(15,-2.8)
    
     \node[Nfill=n](s7)(12.5,-14){$\pi_1$}
     
    \node(t1)(35,0){}
    \node(t2)(40,0){}
    \drawrect[dash={0.5 1}0,Nfill=n,Nframe=y](33,-2,42,2)
    \node(t3)(45,0){}
    \drawrect[dash={0.5 1}0,Nfill=n,Nframe=y](43,-2,47,2)
    \node(t4)(50,0){}
    \node(t5)(55,0){}
    \drawrect[dash={0.5 1}0,Nfill=n,Nframe=y](48,-2,57,2)
    \node(t6)(60,0){}
    \drawrect[dash={0.5 1}0,Nfill=n,Nframe=y](58,-2,62,2)
    
    \drawline[linewidth=0.2](35,-2.8)(35,-8)(60,-8)(60,-2.8)
    \drawline[linewidth=0.2](40,-2.8)(40,-6)(45,-6)(45,-2.8)(45,-6)(50,-6)(50,-2.8)(50,-6)(55,-6)(55,-2.8)
    
     \node[Nfill=n](t7)(47.5,-14){$\pi_2$}
     
    \node(u1)(70,0){}
    \node(u2)(75,0){}
    \drawrect[dash={0.5 1}0,Nfill=n,Nframe=y](68,-2,77,2)
    \node(u3)(80,0){}
    \drawrect[dash={0.5 1}0,Nfill=n,Nframe=y](78,-2,82,2)
    \node(u4)(85,0){}
    \node(u5)(90,0){}
    \drawrect[dash={0.5 1}0,Nfill=n,Nframe=y](83,-2,92,2)
    \node(u6)(95,0){}
    \drawrect[dash={0.5 1}0,Nfill=n,Nframe=y](93,-2,97,2)
    
    \drawline[linewidth=0.2](70,-2.8)(70,-6)(75,-6)(75,-2.8)
    \drawline[linewidth=0.2](80,-2.8)(80,-6)(85,-6)(85,-2.8)    
    \drawline[linewidth=0.2](90,-2.8)(90,-6)(95,-6)(95,-2.8)
    
     \node[Nfill=n](u7)(82.5,-14){$\pi_3$}

    \end{picture}
\caption{\label{fig connect}Three examples of partitions in $NC(6)$.  With multi-index $\mx{i}=(2,1,2,1)$, $\pi_1$ and $\pi_2$ do connect all the blocks of $\widehat{0_4}$, and so would be included in the sum in Equation \ref{eq prod cumulant}; $\pi_3$ leaves the first block of $\widehat{0_4}$ isolated, and so is not included in this sum.}
\end{center}
\end{figure}
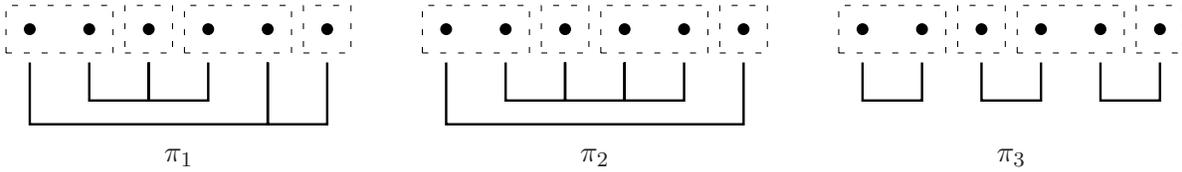

\subsection{Organization}  This paper is organized as follows.  In Section \ref{sect circular}, we address our main theorem through the special case of a circular operator $c$, the pre-eminent example of an $\mathscr{R}$-diagonal element.  In Section \ref{sect combinatorial R}, we calculate the $\mathscr{R}$-transform of the operator $|\ll-c|^2$ for any scalar $\ll$, using combinatorial means (primarily judicious application of Theorem \ref{thm prod cumulant}).  In Section \ref{sect circular support}, we us this $\mathscr{R}$-transform to explicitly determine the support of the spectral measure of $|\ll-c|^2$; its left boundary point represents the norm of the resolvent $(\ll-c)^{-1}$.  In this case, the measure itself can be completely determined.

\medskip

The exact calculations of Section \ref{sect circular support} cannot be extended to the general $\mathscr{R}$-diagonal case, and so we proceed to develop analytic arguments to prove the asymptotic statement of Theorem \ref{main theorem}.  In Section \ref{sect general case}, we demonstrate the power of working with the symmetrizations of spectral measures (via Equations \ref{eq symmetric sum} and \ref{eq R transform sum}).  Section \ref{sect analytic approach to R} shows how Theorem \ref{thm circular R} can be obtained directly from these analytic means.  Section \ref{sect analytic continuation} gives a general analytic continuation argument from the above-mentioned equations that yields a useful power-series inversion formula, which is then used in Section \ref{sect neg moments} to calculate (to leading order) the negative absolute moments of the resolvent $(\ll-a)^{-1}$ (theorem \ref{thm neg moments}).  These techniques are then pushed through to give a complete proof of Theorem \ref{main theorem} in Section \ref{sect proof of main theorem}.

\medskip

Finally, in Section \ref{sect Neg Moments}, we examine the combinatorial structures underlying the negative absolute moments of the resolvent $(\ll-a)^{-1}$.  In Section \ref{sect PSD 1}, we introduce {\em partition structure diagrams}, a new way to view the basic structure of partitions appearing in Equation \ref{eq R-diag moment cumulant} for $\mathscr{R}$-diagonal operators, and use their enumeration to provide a bijective combinatorial proof of the refinement (Theorem \ref{thm moment polynomial}) of Theorem \ref{thm neg moments}.  Then, in Section \ref{sect PSD 2}, we show how knowledge of the asymptotics of negative moments alone can be used to recapture the sharp lower-bound of Theorem \ref{main theorem}.

\section{The Circular Case} \label{sect circular}

Let $c$ be a circular operator of unit variance in a $\mathrm{II}_1$-factor $\mathscr{C}$.  Since $c$ is $\mathscr{R}$-diagonal, its spectral radius is $\|c\|_2 =1$ by our choice of variance.  Hence the resolvent
\[ R_c(\lambda) = (\lambda-c)^{-1} \]
is a $\mathscr{C}$-valued analytic function on the domain $|\lambda|>1$.  Our goal in this section is to calculate $\|R_c(\lambda)\|$ to leading order as $|\lambda|\downarrow 1$.

\begin{remark} \label{rk c rot inv}  Note that $c$ is rotationally-invariant; it follows that if $\theta\in\R$ then $\|R_c(\lambda e^{-i\theta})\| = \|R_c(\lambda)\|$.  Hence, we restrict our attention to the case $\lambda>1$ in $\R$. \end{remark}

For $\ll>1$, define the positive operator $T_\ll$ by 
\begin{equation} \label{eq T} T=T_\ll = R_c(\ll)^\ast R_c(\ll) = (\ll-c^{\ast})^{-1} (\ll-c)^{-1}. \end{equation}

\noindent Note that $\|R_c(\ll)\|^2 = \|T_\ll\|$.  What's more, since $T>0$ it follows that $\|T\| =  \inf\mathrm{spec}\, (T^{-1})$, and $T^{-1} = (\ll-c)(\ll-c^\ast) = |\ll-c|^2$ is an operator we can handle with combinatorial techniques.  In particular, we will now calculate the $\mathscr{R}$-transform of this operator, which will allow us to calculate the spectral measure of $T^{-1}$ through Equations \ref{eq Cauchy transform} and \ref{eq G-R functional eqn}.

\subsection{The $\mathscr{R}$-transform of $|\ll-c|^2$} \label{sect combinatorial R}

\label{theorem circular R page}

\begin{theorem} \label{thm circular R} Let $c$ be a circular operator of unit variance, and let $\ll\in\R$.  Then
\begin{equation} \label{eq circular R} \mathscr{R}_{|\ll-c|^2}(z) = \frac{1}{1-z} + \frac{\ll^2}{(1-z)^2}. \end{equation}
\end{theorem}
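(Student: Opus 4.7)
The plan is to compute the free cumulants $\kappa_n(|\lambda-c|^2)$ directly and show $\kappa_n(|\lambda-c|^2) = 1 + n\lambda^2$ for every $n\ge 1$; summing then yields
\[
\mathscr{R}_{|\lambda-c|^2}(z) \;=\; \sum_{n\ge 1}(1+n\lambda^2)\,z^{n-1} \;=\; \frac{1}{1-z} + \frac{\lambda^2}{(1-z)^2},
\]
as claimed. The calculation proceeds via Theorem \ref{thm prod cumulant}. Writing $b := (\lambda-c^*)(\lambda-c)$ and using $\mathbf{i} = (2,2,\ldots,2)$,
\[
\kappa_n(b) \;=\; \sum_{\substack{\pi \in NC(2n) \\ \pi \vee \widehat{0_n} = 1_{2n}}}\kappa_\pi\bigl[\lambda-c^*, \lambda-c, \ldots, \lambda-c^*, \lambda-c\bigr].
\]

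Since $c$ is circular with $\kappa_2[c,c^*]=\kappa_2[c^*,c]=1$ and all other cumulants vanishing, shifting by the scalar $\lambda$ affects only the first cumulant: $\kappa_1[\lambda-c] = \kappa_1[\lambda-c^*] = \lambda$, while $\kappa_2[\lambda-c^*,\lambda-c] = \kappa_2[\lambda-c,\lambda-c^*] = 1$ and all other cumulants are zero. So $\kappa_\pi\ne 0$ only when every block of $\pi$ is either a singleton (contributing $\lambda$) or a pair whose two endpoints have opposite parity (contributing $1$); the parity matching is automatic for non-crossing pairs, since the enclosed interval has even length. I next view the $n$ blocks of $\widehat{0_n}$ as ``super-vertices'' and each pair of $\pi$ as an edge between the super-vertices containing its endpoints; the condition $\pi\vee\widehat{0_n} = 1_{2n}$ is then precisely connectedness of this \emph{pair-graph}. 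A pair of the form $\{2k-1,2k\}$ would create a loop at super-vertex $k$, consuming both of its points and thereby isolating it from the rest---contradicting connectedness. So admissible $\pi$ have no loops, and since each super-vertex contains only two points, the pair-graph has maximum degree $2$. A connected graph on $n$ vertices of maximum degree $2$ is either a cycle (with $n$ edges) or a path (with $n-1$ edges), so the number of singletons satisfies $s \in \{0, 2\}$.

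I then count admissible $\pi$ in each case, using the planarity imposed by non-crossingness on the disk (with super-vertices appearing in their natural cyclic order on the boundary). A planar Hamiltonian cycle visiting each super-vertex once must visit them in their natural cyclic order, forcing the unique $s=0$ admissible partition
\[
\pi_0 \;=\; \bigl\{\,\{1,2n\},\ \{2,3\},\ \{4,5\},\ \ldots,\ \{2n-2,2n-1\}\,\bigr\},
\]
whose pair-graph is the natural $n$-cycle. For $s=2$, the pair-graph is a Hamiltonian path, obtained from the cycle of $\pi_0$ by deleting precisely one of its $n$ edges; each such deletion yields a distinct admissible partition (the two endpoints of the deleted pair becoming the singletons), giving exactly $n$ admissible partitions. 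Combining, $\kappa_n(b) = 1\cdot 1 + n\cdot \lambda^2 = 1 + n\lambda^2$, as required.

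The main obstacle is the uniqueness/planarity step: justifying that the non-crossing constraint compels the Hamiltonian cycle (resp.\ path) to visit the super-vertices in their natural cyclic order, ruling out all other combinatorial trees or cycles on $n$ labeled vertices. The cleanest route is an inductive argument on the outermost pair of $\pi$: it must be $\{1,2n\}$---for any other choice $\{1,2k\}$ with $k<n$, the inner and outer sub-pairings live over disjoint sets of super-vertices with no linking edge between them, forcing disconnection---after which one recurses on the enclosed points to reach $\pi_0$, and an analogous argument handles the $s=2$ case.
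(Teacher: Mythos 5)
Your route---applying Theorem \ref{thm prod cumulant} once to the factorization $b=(\lambda-c^\ast)(\lambda-c)$ with $\mathbf{i}=(2,\ldots,2)$, instead of the paper's decomposition $|\lambda-c|^2=\lambda^2-\lambda(c+c^\ast)+cc^\ast$---is legitimate and leads to the correct totals, but as written it has a genuine gap at the two-singleton step. The claim that ``parity matching is automatic for non-crossing pairs'' is only true for \emph{complete} pairings; once singletons are allowed, a pair may enclose an odd number of points and join two positions of the same type. Correspondingly, the claim that the admissible partitions with two singletons are exactly the $n$ partitions obtained from $\pi_0$ by deleting one edge is false. Already for $n=2$, the partition $\pi=\{\{1,3\},\{2\},\{4\}\}\in NC(4)$ is non-crossing, its pair joins two different blocks of $\widehat{0_2}$, and $\pi\vee\widehat{0_2}=1_4$; yet it is not of the form ``$\pi_0$ minus an edge,'' and its pair-graph is the very same path as that of $\{\{2,3\},\{1\},\{4\}\}$, so connectivity, the degree bound, and planarity cannot rule it out. (In general a Hamiltonian path on the super-vertices admits several non-crossing realizations, since a super-vertex of degree one may contribute either of its two points to its edge, the leftover point sitting enclosed as a singleton.) These extra partitions do satisfy the join condition, so your enumeration of the admissible set is wrong; the final answer $\kappa_n(b)=1+n\lambda^2$ survives only because each extra partition contains a same-type pair---here $\kappa_2[\lambda-c^\ast,\lambda-c^\ast]=\kappa_2[c^\ast,c^\ast]=0$---and hence contributes zero. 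That vanishing is precisely what your ``automatic parity'' assertion was supposed to deliver for free, and it is where the real work lies.

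So the missing step is: among all non-crossing partitions of $\{1,\ldots,2n\}$ into singletons and pairs with no pair inside a block of $\widehat{0_n}$ and with $\pi\vee\widehat{0_n}=1_{2n}$, exactly $n$ have every pair joining a $c^\ast$-position to a $c$-position (namely $\pi_0$ with one edge replaced by two singletons), while all the others contain a same-type pair and give cumulant zero. This requires an argument that tracks the $1/\ast$ types and the actual points used, not merely the pair-graph on super-vertices. It is worth noting that the paper's own proof, in its different bookkeeping, confronts exactly this phenomenon: the configuration analyzed around Figure \ref{fig proof 4} (in the proof of Lemma \ref{lem adjacent 1s}) satisfies the connectedness condition $\pi\vee\widehat{0_n}=1_{|\mathbf{i}|}$ but is discarded because it forces a $c^\ast$ to be matched with a $c^\ast$. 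Your inductive ``outermost pair'' sketch for the $s=0$ case is fine and parallels the paper's argument in Figures \ref{fig proof 1}--\ref{fig proof 3}; it is the promised ``analogous argument'' for $s=2$ that cannot go through as stated and must be replaced by a type-sensitive analysis of which realizations of the path have all pairs alternating.
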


\begin{remark} \label{rk circular R} We find the formula in Equation \ref{eq circular R} interesting in its own right.  It mirrors a similar formula for the semicircular equivalent provided in \cite{Yoshida}; if $s$ is a semicircular operator of variance $1$,
\[ \mathscr{R}_{(\ll-s)^2}(z) = \frac{1}{1-z} + \frac{\ll^2}{(1-2z)^2}. \]
Their techniques are entirely analytic: indeed, one can calculate the Cauchy-transform of $(\ll-s)^2$ from that of $\ll-s$, the latter of which is well-known, and then the $\mathscr{R}$-transform is achieved through Equation \ref{eq G-R functional eqn}.  Our approach below is markedly different, using only combinatorial techniques; however, analytic techniques will be developed to study the more general case in later sections, and we will rederive Equation \ref{eq circular R} using those techniques in Section \ref{sect analytic approach to R}
\end{remark}

\begin{proof} Expand $|\ll-c|^2 = \ll^2 -\ll(c+c^\ast) + cc^\ast$.  Denote $\alpha_1 = -\ll(c+c^\ast)$ and $\alpha_2 = cc^\ast$, so that $|\ll-c|^2 = \ll^2 + \alpha_1+\alpha_2$.  The constant $\ll^2$ is free from any operator, and so we have an initial simplification
\begin{equation} \label{eq c R-transform 1} \mathscr{R}_{|\ll-c|^2}(z) = \mathscr{R}_{\ll^2}(z) + \mathscr{R}_{\alpha_1+\alpha_2}(z) = \ll^2 + \mathscr{R}_{\alpha_1+\alpha_2}(z). \end{equation}
Now, $\alpha_1,\alpha_2$ are certainly not free.  We calculate the $\mathscr{R}$-transform of their sum as a power-series whose coefficients are free cumulants:
\begin{equation} \label{eq c R-transform 2} \mathscr{R}_{\alpha_1+\alpha_2}(z) = \sum_{n\ge 1} \kk_n[\alpha_1+\alpha_2,\ldots,\alpha_1+\alpha_2]\, z^{n-1}. \end{equation}
The free cumulant $\kk_n$ is a multilinear function, and so we can expand
\begin{equation} \label{eq expand k} 
\kk_n[\alpha_1+\alpha_2,\ldots,\alpha_1+\alpha_2] = \sum_{(i_1,\ldots,i_n)\in\{1,2\}^n} \kk_n[\alpha_{i_1},\ldots,\alpha_{i_n}].
\end{equation}
We will shortly see that the vast majority of the $2^n$ terms in the sum in Equation \ref{eq expand k} are $0$.  To ease notation, let $\mx{i}$ denote the multi-index $(i_1,\ldots,i_n)$, and denote the $n$-tuple $\alpha_{i_1},\ldots,\alpha_{i_n}$ as $\alpha_{\mx{i}}$.  Since $\alpha_2 = cc^\ast$ is a product, for each $\mx{i}$ we can expand the cumulant in Equation \ref{eq expand k} using Equation \ref{eq prod cumulant}.
\begin{equation} \label{eq sum 1} \kk_n[\alpha_\mx{i}] =\sum_{\pi\in NC(|\mx{i}|)\atop \pi\vee\widehat{0_n}=1_{|\mx{i}|}} \kk_\pi[\widehat{\alpha_\mx{i}}]. \end{equation}
As in Remark \ref{rk expand list}, the list $\widehat{\alpha_\mx{i}}$ is the expanded list of products from $\alpha_\mx{i}$.  For example, if $\mx{i}=(2,1,2,1)$ so that $\alpha_\mx{i} = [\alpha_2,\alpha_1,\alpha_2,\alpha_1] = [cc^\ast,\alpha_1,cc^\ast,\alpha_1]$, then $\widehat{\alpha_\mx{i}} = [c,c^\ast,\alpha_1,c,c^\ast,\alpha_1]$.

\medskip

Since $\alpha_1 = -\ll(c+c^\ast)$, any such cumulant $k_\pi[\widehat{\alpha_\mx{i}}]$ can be expanded into a sum of cumulants $k_\pi$ of a list of $c$s and $c^\ast$s.  Since the only non-vanishing block $\ast$-cumulants of $c$ are $\kk_2[c,c^\ast] = \kk_2[c^\ast,c] =1$, this means that {\em the only $\pi$ which can contribute to the sum \ref{eq sum 1} are non-crossing {\bf pairings}}.  This turns out to be an enormous simplification of the sum \ref{eq expand k}; the result is most of these terms are $0$.  Let us first consider the two endpoints.

\medskip

Suppose $\mx{i} = (1,1,\ldots,1)$.  The corresponding term in Equation \ref{eq expand k} is
\[ \kk_n[\alpha_1,\ldots,\alpha_1] = (-\ll)^n\kk_n[c+c^\ast,\ldots,c+c^\ast]. \]
Expanding this in $2^n$ terms, we have mixed cumulants in $c,c^\ast$, only two of which are non-vanishing: $\kk_2[c,c^\ast] = \kk_2[c^\ast,c] = 1$.  Hence,
\begin{equation} \label{eq endpoint 1} \kk_n[\alpha_1,\ldots,\alpha_1] = 2\ll^2 \1_{\{n=2\}}. \end{equation}

\medskip

On the other hand, suppose $\mx{i} = (2,2,\ldots,2)$.  The corresponding term in Equation \ref{eq expand k} is
\[ \kk_n[\alpha_2,\ldots,\alpha_2] = \kk_n[cc^\ast,cc^\ast,\ldots,cc^\ast]. \]
Employing Theorem \ref{thm prod cumulant} and the above observation that only pairings contribute, we can expand this cumulant as a sum,
\begin{equation} \label{eq sum 2}
\kk_n[cc^\ast,\ldots,cc^\ast] = \sum_{\pi\in NC_2(2n)\atop \pi\vee \widehat{0_n}=1_{2n}} \kk_\pi[c,c^\ast,\ldots,c,c^\ast].
\end{equation}
In this case, $\widehat{0_n} = \{\{1,2\},\{3,4\},\ldots,\{2n-1,2n\}\}$.  Let $\pi$ be any pairing that connects these blocks, and consider the block in $\pi$ containing $1$.  Since $\pi$ is non-crossing, the match to $1$ must be even (or there would be an odd number of points in between that could therefore not be paired in a non-crossing manner).   Suppose $1\sim_\pi {2k}$.  If $k<n$, then there can be no non-crossing path joining $2k+1$ to $2k$, since such a concatenation of pairings would have to cross the pairing $\{1,2k\}$, as demonstrated in Figure \ref{fig proof 1}.
\begin{figure}[htbp]
\begin{center}
\unitlength=5pt
    \begin{picture}(70, 15)(0,-10)
    \thinlines
    \gasset{Nw=1, Nh=1, Nmr=1,Nframe=n,Nfill=y, Nmarks=n, ExtNL=y, NLdist=2,AHnb=0}
    \node(s1)(0,0){$1$}
    \node(s2)(5,0){}
    \drawrect[dash={0.5 1}0,Nfill=n,Nframe=y](-2,-2,7,2)
    \node(s3)(10,0){}
    \node(s4)(15,0){}
    \drawrect[dash={0.5 1}0,Nfill=n,Nframe=y](8,-2,17,2)
    \node(s5)(30,0){}
    \drawedge[dash={0.2 1.5}0, sxo=4,exo=-4](s4,s5){}    
    \node(s6)(35,0){}
    \drawrect[dash={0.5 1}0,Nfill=n,Nframe=y](28,-2,37,2)   
    \node(s7)(40,0){}
    \node(s8)(45,0){$2k$}
    \drawrect[dash={0.5 1}0,Nfill=n,Nframe=y](38,-2,47,2)     
    \node(s9)(50,0){}
    \node(s10)(55,0){}
    \drawrect[dash={0.5 1}0,Nfill=n,Nframe=y](48,-2,57,2) 
    \node(s11)(60,0){}
    \node(s12)(65,0){}
    \drawrect[dash={0.5 1}0,Nfill=n,Nframe=y](58,-2,67,2) 

    \drawline[linewidth=0.2](0,-2.8)(0,-6)(45,-6)(45,-2.8){}
    \drawline[linewidth=0.2](22,-2.8)(22,-5.6){}
    \drawline[linewidth=0.2](22,-6.4)(22,-10)(50,-10)(50,-2.8){}

    \end{picture}
\caption{\label{fig proof 1}}
\end{center}
\end{figure}

\noindent Hence, it must be that $1\sim_\pi 2n$.  Now, consider the match to $2$: say $2\sim_\pi 2\ell+1$.  If $\ell>1$, then the point $3$ cannot be connected to $2$ with a path composed of blocks in $\pi$ and $\widehat{0_n}$: since $\pi$ is non-crossing, the match to $3$ must be either $4$ or lie within the blocks $\{5,6\},\ldots,\{2\ell-1,2\ell\}$.  None of these blocks can be connected to any other blocks of $\widehat{0_n}$ via $\pi$ without crossing the pairing $\{2,2\ell+1\}$.  Hence, it must be that $2\sim_\pi 3$.  This is demonstrated in Figure \ref{fig proof 2}.
\begin{figure}[htbp]
\begin{center}
\unitlength=5pt
    \begin{picture}(70, 15)(0,-12)
    \thinlines
    \gasset{Nw=1, Nh=1, Nmr=1,Nframe=n,Nfill=y, Nmarks=n, ExtNL=y, NLdist=2,AHnb=0}
    \node(s1)(0,0){$1$}
    \node(s2)(5,0){}
    \drawrect[dash={0.5 1}0,Nfill=n,Nframe=y](-2,-2,7,2)
    \node(s3)(10,0){}
    \node(s4)(15,0){}
    \drawrect[dash={0.5 1}0,Nfill=n,Nframe=y](8,-2,17,2)
    \node(s5)(20,0){}
    \node(s6)(25,0){}
    \drawrect[dash={0.5 1}0,Nfill=n,Nframe=y](18,-2,27,2)
    \drawrect[dash={0.2 0.2}0,Nfill=n,Nframe=y](7.5,-8,27.5,4)
    \node[Nfill=n](t)(17,-8){isolated}
    \node(s7)(30,0){$2\ell+1$}
    \node(s8)(35,0){}
    \drawrect[dash={0.5 1}0,Nfill=n,Nframe=y](28,-2,37,2)
    \node(s9)(50,0){}
    \drawedge[dash={0.2 1.5}0, sxo=4,exo=-4](s8,s9){}    
    \node(s10)(55,0){}
    \drawrect[dash={0.5 1}0,Nfill=n,Nframe=y](48,-2,57,2)   
    \node(s11)(60,0){}
    \node(s12)(65,0){}
    \drawrect[dash={0.5 1}0,Nfill=n,Nframe=y](58,-2,67,2)

    \drawline[linewidth=0.2](0,-2.8)(0,-12)(65,-12)(65,-2.8){}
    \drawline[linewidth=0.2](5,-2.8)(5,-10)(30,-10)(30,-2.8){}

    \end{picture}
\caption{\label{fig proof 2}}
\end{center}
\end{figure}

\medskip

Iterating this argument shows that, in fact, there is only {\em one} pairing $\pi\in NC_2(2n)$ for which $\pi\vee \widehat{0_n} = 1_{2n}$: the pairing $\varpi_n = \{\{1,2n\},\{2,3\},\{3,4\},\ldots,\{2n-2,2n-1\}\}$ pictured in Figure \ref{fig proof 3}.

\begin{figure}[htbp]
\begin{center}
\unitlength=5pt
    \begin{picture}(70, 10)(0,-8)
    \thinlines
    \gasset{Nw=1, Nh=1, Nmr=1,Nframe=n,Nfill=y, Nmarks=n, ExtNL=y, NLdist=2,AHnb=0}
    \node(s1)(0,0){}
    \node(s2)(5,0){}
    \drawrect[dash={0.5 1}0,Nfill=n,Nframe=y](-2,-2,7,2)
    \node(s3)(10,0){}
    \node(s4)(15,0){}
    \drawrect[dash={0.5 1}0,Nfill=n,Nframe=y](8,-2,17,2)
    \node(s5)(20,0){}
    \node(s6)(25,0){}
    \drawrect[dash={0.5 1}0,Nfill=n,Nframe=y](18,-2,27,2)
    \node(s7)(30,0){}
    \node(s8)(35,0){}
    \drawrect[dash={0.5 1}0,Nfill=n,Nframe=y](28,-2,37,2)
    \node(s9)(50,0){}
    \drawedge[dash={0.2 1.5}0, sxo=4,exo=-4](s8,s9){}    
    \node(s10)(55,0){}
    \drawrect[dash={0.5 1}0,Nfill=n,Nframe=y](48,-2,57,2)   
    \node(s11)(60,0){}
    \node(s12)(65,0){}
    \drawrect[dash={0.5 1}0,Nfill=n,Nframe=y](58,-2,67,2)
    
    \drawline[linewidth=0.2](5,-2.8)(5,-5)(10,-5)(10,-2.8){}
    \drawline[linewidth=0.2](15,-2.8)(15,-5)(20,-5)(20,-2.8){}
    \drawline[linewidth=0.2](25,-2.8)(25,-5)(30,-5)(30,-2.8){}
    \drawline[linewidth=0.2](35,-2.8)(35,-5)(40,-5)(40,-2.8){}        
    \drawline[linewidth=0.2](45,-2.8)(45,-5)(50,-5)(50,-2.8){}
    \drawline[linewidth=0.2](55,-2.8)(55,-5)(60,-5)(60,-2.8){}
    \drawline[linewidth=0.2](0,-2.8)(0,-8)(65,-8)(65,-2.8){}
                
    \end{picture}
\caption{\label{fig proof 3}The pairing $\varpi_n$ is the unique pairing for which $\varpi_n\vee \widehat{0_n}=1_{2n}$ where $\widehat{0_n} = \{\{1,2\},\{3,4\},\ldots,\{2n-1,2n\}\}$.}
\end{center}
\end{figure}
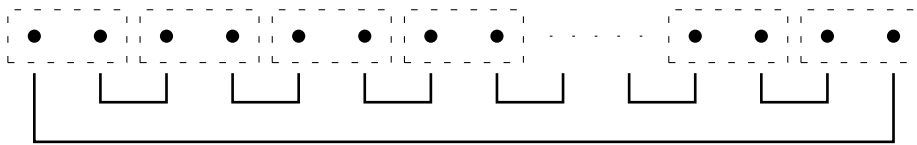

\noindent Hence, we have from Equation \ref{eq sum 2},
\begin{equation} \label{eq endpoint 2} \kk_n[\alpha_2,\ldots,\alpha_2] = \kk_{\varpi_n}[c,c^\ast,\ldots,c,c^\ast] = \kk_2[c,c^\ast]\kk_2[c^\ast,c]^{n-1} = 1. \end{equation}

\medskip

Now, we must consider the remaining $2^{n-1}$ terms in Equation \ref{eq expand k}, all incorporating some mixture of $\alpha_1$s and $\alpha_2$s.  The following lemmas show that only $n$ of these terms are non-zero.

\begin{lemma} \label{lem 2 1s} Suppose $\mx{i}$ is a length $n$ string containing both $1$s and $2$s, and let $\widehat{0_n}$ denote the corresponding interval partition.  If there exists $\pi\in NC_2(|\mx{i}|)$ such that $\pi\vee\widehat{0_n} = 1_{|\mx{i}|}$, then $\mx{i}$ contains precisely two $1$s. \end{lemma}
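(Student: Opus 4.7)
The plan is to turn the join condition $\pi \vee \widehat{0_n} = 1_{|\mx{i}|}$ into a connectivity statement about a natural graph on the blocks of $\widehat{0_n}$, and then to count. Since each entry of $\mx{i}$ is either $1$ or $2$, let $s$ be the number of $1$s and $p$ the number of $2$s in $\mx{i}$, so $n = s+p$ and $|\mx{i}| = s+2p$. The blocks of $\widehat{0_n}$ are either singletons (coming from $i_j=1$) or $2$-element intervals (coming from $i_j=2$). Define the \emph{block multigraph} $\Gamma$ on the $n$ blocks of $\widehat{0_n}$ whose edges are the pairs of $\pi$ that connect two \emph{distinct} blocks of $\widehat{0_n}$. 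The hypothesis $\pi \vee \widehat{0_n} = 1_{|\mx{i}|}$ is equivalent to $\Gamma$ being connected.

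Next I would carry out the edge count. A pair of $\pi$ that fails to contribute an edge to $\Gamma$ must lie entirely inside some block of $\widehat{0_n}$; since singleton blocks cannot contain a pair, each such ``internal'' pair is one of the at most $p$ size-$2$ blocks. Thus the number of edges of $\Gamma$ is at least $\frac{s+2p}{2} - p = \frac{s}{2}$. More importantly, connectivity of $\Gamma$ on $n = s+p$ vertices requires at least $s+p-1$ edges, so
\[
\#\{\text{internal pairs}\} \;\le\; \frac{s+2p}{2} - (s+p-1) \;=\; 1 - \frac{s}{2}.
\]
Since the left-hand side is non-negative, $s \le 2$.

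Finally, observe that $|\mx{i}| = s+2p$ must be even for a pairing to exist, forcing $s$ even. The hypothesis that $\mx{i}$ contains both $1$s and $2$s gives $s \ge 1$, and combined with $s \le 2$ and $s$ even this leaves only $s = 2$, which is the conclusion. The only step with any subtlety is the equivalence between the join condition and connectivity of $\Gamma$ (a standard fact about joins of partitions in terms of the corresponding equivalence relations), but once that is in hand the rest is a one-line combinatorial inequality.
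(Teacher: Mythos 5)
Your proof is correct, and it takes a genuinely different route from the paper's. The paper argues by path-tracing: it picks three distinct singleton positions and follows the alternating $\pi$/$\widehat{0_n}$ paths emanating from one of them, using that $\pi$ is a pairing and the blocks of $\widehat{0_n}$ have size at most $2$ to force the two paths to coincide, a contradiction; as the paper's own remark puts it, the singletons are the ends of a path, and a path has only $0$ or $2$ ends. You instead contract the blocks of $\widehat{0_n}$ and run an edge count: the join condition is connectivity of the block multigraph, a connected (multi)graph on $s+p$ vertices needs at least $s+p-1$ edges, while a pairing of $s+2p$ points supplies only $\frac{s+2p}{2}$ pairs, so $s\le 2$; parity of $|\mx{i}|$ and the hypothesis $s\ge 1$ then give $s=2$. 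Both arguments use only that $\pi$ is a pairing (non-crossingness is irrelevant here) and both need the parity observation, but the trade-off is real: your counting argument is shorter and yields a small bonus for free, namely that when $s=2$ there can be \emph{no} internal pair inside a $2$-block and the block graph is a spanning tree, which is consistent with the pair configurations the paper later pins down; the paper's path-tracing viewpoint, on the other hand, is precisely the local mechanism reused in the proof of Lemma \ref{lem adjacent 1s}, so it earns its keep in the surrounding argument even though it is less economical for this lemma alone.
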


For the proof of Lemma \ref{lem 2 1s}, note first that $|\mx{i}| = \#1$s in $\mx{i} + 2\cdot\#2$s in $\mx{i}$, and so for there to be {\em any} pairings, the number of $1$s must be even.   Suppose, then, that $u,v,w\in\{1,\ldots,|\mx{i}|\}$ are distinct elements at positions corresponding to $1$s in $\mx{i}$.  The condition $\pi\vee\widehat{0_n} = 1_{|\mx{i}|}$ implies that there are paths
\[ \begin{aligned} v &= u_1 \sim_{\s_1} u_2 \sim_{\s_2} \cdots\sim_{\s_{r-2}} u_{r-1} \sim_{\s_{r-1}} u_r = u, \\
v &= w_1 \sim_{\t_1} w_2 \sim_{\t_2} \cdots \sim_{\t_{s-2}} w_{s-1} \sim_{\t_{s-1}} w_s = w, \end{aligned} \] 
where the sequences $\s_j$ and $\t_j$ alternate between $\pi$ and $\widehat{0_n}$.  By assumption, $v$ corresponds to a singleton in $\widehat{0_n}$, and so (assuming that the paths are ``minimal'' so that no number appears as two differently indexed $u_j$ or $v_j$) we must have $\s_1 = \t_1 = \pi$.  But $\pi$ is a pairing, so there is a unique point $v_2$ with $u_1 = w_1 \sim v_2$, and therefore $u_2 = w_2 = v_2$.  Then $v_2 \sim_{\s_2} u_3$ and $v_2 \sim_{\t_2} w_3$, where $\s_2 = \t_2 = \widehat{0_n}$.  If $v_2$ is a singleton in $\widehat{0_n}$, then this is the end of both paths, meaning $u = u_2 = w_2 = w$, contradicting our assumption.  Otherwise, the block of $\widehat{0_n}$ containing $v_2$ is a $2$-block, in which case there is a {\em unique} $v_3$ with $v_2\sim_{\widehat{0_n}} v_3$, and so $u_3 = w_3 = v_3$.  Continuing inductively, we reach a contradiction to the fact that $u\ne w$.  Hence, there must be precisely two $1$s in $\mx{i}$.

\begin{remark} \label{rk two ends} The above proof is really just the following trivial observation: the singletons in $\widehat{0_n}$ must be ends of a path joining blocks, and a path can have only $0$ or $2$ ends, thence $\widehat{0_n}$ can have only $0$ or $2$ $1$s if it is to have this path-connected property. \end{remark}

Lemma \ref{lem 2 1s} shows that the only contributing $\mx{i}$ to Equation \ref{eq expand k} are those of the form $2^{\e_1}\, 1\, 2^{\e_2} \, 1 \, 2^{\e_3}$ for some $\e_1,\e_2,\e_3\ge 0$.  The next Lemma shows that either $\e_2=0$ or $\e_1 = \e_3 = 0$ in order for the term to contribute.

\begin{lemma} \label{lem adjacent 1s} Suppose $\mx{i}$ is a length $n$ string containing both $1$s and $2$s, and let $\widehat{0_n}$ denote the corresponding interval partition.  If $\mx{i}$ contains the substring $(1,2,1,2)$ or $(2,1,2,1)$, then $\kk_n[\alpha_\mx{i}] = 0$.
\end{lemma}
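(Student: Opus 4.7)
The plan is to argue by contradiction. The case $\mx{i} \supset (2,1,2,1)$ reduces to $\mx{i} \supset (1,2,1,2)$ by the reversal symmetry of free cumulants for self-adjoint arguments with real moments (our $\alpha_1, \alpha_2$ are both self-adjoint and every mixed $\phi$-moment of the $\alpha_i$'s is a nonnegative integer, counting non-crossing pairings of $c$'s and $c^\ast$'s). So suppose $\mx{i}$ contains $(1,2,1,2)$; by Lemma~\ref{lem 2 1s}, $\mx{i} = 2^{\e_1}(1,2,1,2)2^{\e_3}$ for some $\e_1, \e_3 \ge 0$, and in $\widehat{\alpha_\mx{i}}$ the substring occupies positions $q, q+1, \ldots, q+5$ with $q = 2\e_1+1$; the singletons of $\widehat{0_n}$ are $\{q\}$ and $\{q+3\}$. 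Suppose for contradiction that some non-crossing pairing $\pi$ with $\pi \vee \widehat{0_n} = 1_{|\mx{i}|}$ has every block a $(c, c^\ast)$-pair (the character condition).

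The path-tracing argument used in the proof of Lemma~\ref{lem 2 1s} shows the multigraph with edges $\pi \cup \widehat{0_n}$ is a single alternating path from $q$ to $q+3$; the character condition makes $c$ vs.\ $c^\ast$ alternate along the path. Two immediate consequences: no 2-block of $\widehat{0_n}$ (in particular $\{q+1,q+2\}$ or $\{q+4,q+5\}$) may coincide with a $\pi$-pair, as such a block would be an isolated component of $\pi \vee \widehat{0_n}$; and the two endpoints must carry opposite characters. I then examine the $\pi$-partner $u$ of the singleton $q+3$. Non-crossing together with these two constraints eliminates each possibility $u \in \{q, q+1, q+2, q+5\}$ (each either traps an odd number of positions inside the resulting arc, forces the forbidden pair $\{q+1,q+2\}$ as a $\pi$-pair, or yields a same-character $\pi$-pair); the case $u$ outside the substring is handled by the parallel analysis of the forced inner pairing of the arc $\{q+3, u\}$, which itself must be non-crossing, character-compatible, and avoid $\widehat{0_n}$-coincidence. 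The only remaining option is $u = q+4$; and repeating the same analysis for the $\pi$-partners of $q, q+1, q+2$ and $q+5$ forces each of them to pair with a partner outside the substring.

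The problem is now a finite combinatorial check. By non-crossing, the four substring-to-outside arcs split so that the substring endpoints going left form a contiguous prefix of $(q, q+1, q+2, q+5)$ and those going right form a contiguous suffix; within each group the outside endpoints form a nested chain. The character condition pins the parity of each outside endpoint (odd index $=$ first of a 2-cell $= c$, even $=$ second $= c^\ast$, with the pattern shifted by the two intervening 1-cells). One then checks the five possible splits case by case: in each, the forced parities, combined with the nested structure and the requirement that the remaining $L$- and $R$-positions pair non-crossingly without producing a $\widehat{0_n}$-coincidence, leave a position with no legal partner. The main obstacle is this final parity-and-nesting bookkeeping: the $c/c^\ast$-pattern of the outside 2-cells is shifted relative to that of the substring's 2-cells by the two 1-cells, so the five configurations must each be ruled out individually, though each elimination is elementary once set up.
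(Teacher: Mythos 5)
Your opening moves are sound: the observation that $\pi\cup\widehat{0_n}$ is a single alternating path joining the two singletons (this is Remark~\ref{rk two ends}), that no $2$-cell of $\widehat{0_n}$ can recur as a $\pi$-pair, and that under the ``all pairs compatible'' hypothesis the $c/c^\ast$-characters alternate along the path, are all correct. But the proof stops exactly where the vanishing has to be produced. The final ``finite combinatorial check'' is not finite and is not carried out: once $q,q+1,q+2,q+5$ are sent outside the substring, nothing you have established controls the pairing \emph{inside} the runs $2^{\e_1}$ and $2^{\e_3}$ (whose lengths are unbounded), nor excludes arcs joining the left run to the right run by nesting over the whole substring; so the ``five splits with nested chains'' do not enumerate the remaining configurations, and the assertion that each split ``leaves a position with no legal partner'' is precisely the content of the lemma, left unproved. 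Moreover, several of your intermediate eliminations need more than the tools you cite: e.g.\ the partner $u=q+2$ of the second singleton, completed by $\{q,q+1\}\in\pi$, is non-crossing, coincidence-free and character-compatible, and is killed only because the path must visit \emph{every} vertex; so the connectivity premise has to be carried through the endgame as well. The missing idea is the paper's first step: apply the uniqueness argument from the all-$2$s case (the discussion after Equation~\ref{eq sum 2}) \emph{locally}, which forces the outer pair $\{1,|\mx{i}|\}$ and the staircase pairs inside each maximal run of $2$s (the dark lines of Figure~\ref{fig proof 4}). After that only a handful of slots remain open, a short isolation/crossing analysis shows there is a \emph{unique} connecting pairing, and in it the right-most points of the $2^{\e_1}$- and $2^{\e_2}$-runs are forced together, matching $c^\ast$ with $c^\ast$; vanishing is immediate, with no contradiction argument and no case explosion. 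Without that preliminary rigidification, your local analysis around the substring cannot close up.

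There is also a scope problem. You read ``substring'' as a contiguous factor and, via Lemma~\ref{lem 2 1s} and reversal, reduce to $\mx{i}=2^{\e_1}\,1\,2\,1\,2^{\e_3}$, hard-coding a single $2$ between the two $1$s (your positions $q+1,q+2$, with $q+3$ the second singleton). The paper's proof treats $\mx{i}=2^{\e_1}\,1\,2^{\e_2}\,1\,2^{\e_3}$ for arbitrary $\e_2\ge 1$, and that generality is what is used immediately afterwards in the proof of Theorem~\ref{thm circular R} to conclude that a contributing index must have $\e_2=0$ or $\e_1=\e_3=0$: for instance $\mx{i}=(2,1,2,2,1)$ must be covered, yet it contains neither $(1,2,1,2)$ nor $(2,1,2,1)$ as a contiguous factor. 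So even with your endgame completed, you would have proved a strictly weaker statement than the one the argument needs, and your positional bookkeeping (the forced partner $q+4$ of $q+3$, the four special positions) would have to be redone with a run of length $2\e_2$ between the singletons. The reversal reduction itself is fine --- the mixed moments of $\alpha_1,\alpha_2$ are real (though not nonnegative integers, because of the $-\ll$ factors), so traciality gives reversal invariance of moments and hence of cumulants --- and corresponds to the paper's ``reflect the figure.''
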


For the proof of Lemma \ref{lem adjacent 1s}, note that the discussion following Equation \ref{eq sum 2} may be applied locally, and so in a string of the form $\mx{i}=2^{\e_1}\, 1\, 2^{\e_2} \, 1 \, 2^{\e_3}$ where $\e_1,\e_2,\e_3>0$, any pairing $\pi$ that connects the blocks of $\widehat{0_n}$ must pair according to the dark lines of Figure \ref{fig proof 4}.
\begin{figure}[htbp]
\begin{center}
\unitlength=4.5pt
    \begin{picture}(95, 23)(0,-12)
    \thinlines
    \gasset{Nw=1, Nh=1, Nmr=1,Nframe=n,Nfill=y, Nmarks=n, ExtNL=y, NLdist=2,AHnb=0}
    \node(s1)(0,0){}
    \node(s2)(5,0){}
    \drawrect[dash={0.5 1}0,Nfill=n,Nframe=y](-2,-2,7,2)
    \node(s3)(20,0){}
    \drawedge[dash={0.2 1.5}0, sxo=4,exo=-4](s2,s3){} 
    \node(s4)(25,0){$\hspace{4pt}c^\ast$}
    \drawrect[dash={0.5 1}0,Nfill=n,Nframe=y](18,-2,27,2)
    \node(s5)(30,0){}
    \drawrect[dash={0.5 1}0,Nfill=n,Nframe=y](28,-2,32,2)

    \drawedge[curvedepth=3,sxo=-1,syo=6,exo=1,eyo=6](s1,s4){$\e_1$}    
    
    \node(s6)(35,0){}
    \node(s7)(40,0){}
    \drawrect[dash={0.5 1}0,Nfill=n,Nframe=y](33,-2,42,2)
    \node(s8)(55,0){}
    \drawedge[dash={0.2 1.5}0, sxo=4,exo=-4](s7,s8){} 
    \node(s9)(60,0){$\hspace{4pt}c^\ast$}
    \drawrect[dash={0.5 1}0,Nfill=n,Nframe=y](53,-2,62,2)
    \node(s10)(65,0){}
    \drawrect[dash={0.5 1}0,Nfill=n,Nframe=y](63,-2,67,2)

    \drawedge[curvedepth=3,sxo=-1,syo=6,exo=1,eyo=6](s6,s9){$\e_2$}
    
\node(s11)(70,0){}
    \node(s12)(75,0){}
    \drawrect[dash={0.5 1}0,Nfill=n,Nframe=y](68,-2,77,2)
    \node(s13)(90,0){}
    \drawedge[dash={0.2 1.5}0, sxo=4,exo=-4](s12,s13){} 
    \node(s14)(95,0){}
    \drawrect[dash={0.5 1}0,Nfill=n,Nframe=y](88,-2,97,2)

    \drawedge[curvedepth=3,sxo=-1,syo=6,exo=1,eyo=6](s11,s14){$\e_3$}    

    \drawline[linewidth=0.25](0,-2.8)(0,-12)(95,-12)(95,-2.8)
    \drawline[linewidth=0.25](5,-2.8)(5,-6)(10,-6)(10,-2.8)
    \drawline[linewidth=0.25](15,-2.8)(15,-6)(20,-6)(20,-2.8)
    \drawline[linewidth=0.25](40,-2.8)(40,-6)(45,-6)(45,-2.8)
    \drawline[linewidth=0.25](50,-2.8)(50,-6)(55,-6)(55,-2.8)
    \drawline[linewidth=0.25](75,-2.8)(75,-6)(80,-6)(80,-2.8)
    \drawline[linewidth=0.25](85,-2.8)(85,-6)(90,-6)(90,-2.8)

    \drawline[linewidth=0.05](30,-2.8)(30,-6)(35,-6)(35,-2.8)
    \drawline[linewidth=0.05](65,-2.8)(65,-6)(70,-6)(70,-2.8)
    \drawline[linewidth=0.05](25,-2.8)(25,-9)(60,-9)(60,-2.8)

     \end{picture}
\caption{\label{fig proof 4}}
\end{center}
\end{figure}

\noindent The two singletons cannot pair together, since they would then be isolated by $\widehat{0_n}$. There are thence four positions where the two singletons may pair.  If the two singletons pair to the $\e_1$ and $\e_3$ blocks, then the $\e_2$ block is isolated, hence at least one singleton must pair to the $\e_2$ block, and then the other must pair outside the $\e_2$ block (or again that block would be isolated).  Without loss of generality, suppose that the first singleton pairs to the $\e_2$ block (otherwise we could simply reflect the figure).  It must therefore pair to the adjacent position (or else this position cannot pair anywhere without a crossing).   The remaining singleton must pair to its adjacent block, $\e_3$, for otherwise the right-most open slot in the $\e_2$-block could not be paired without crossings.  These pairings are represented in the light lines in Figure \ref{fig proof 4}.  This forces the remaining pairing in $\pi$  (between the right-most points in the $\e_1$ and $\e_2$ blocks) to match a $c^\ast$ with a $c^\ast$, resulting in a $0$ cumulant.  Therefore, although this pairing does satisfy the connectedness condition $\pi\vee\widehat{0_n} = 1_{|\mx{i}|}$, the cumulant $\kk_n[\alpha_{|\mx{i}|}]=0$.

\medskip

\noindent Hence, at least one of $\e_1,\e_2,\e_3$ must be $0$.  If either $\e_1$ or $\e_3$ is $0$ while the other two are $>0$, the above argument (unchanged) gives the same result.  So consider the case that $\e_2=0$ while $\e_1,\e_3>0$, represented in Figure \ref{fig proof 5}.  The local argument above Figure \ref{fig proof 4} yields the necessity of the dark lines here.
\begin{figure}[htbp]
\begin{center}
\unitlength=4.5pt
    \begin{picture}(65,10)(0,-10)
    \thinlines
    \gasset{Nw=1, Nh=1, Nmr=1,Nframe=n,Nfill=y, Nmarks=n, ExtNL=y, NLdist=2,AHnb=0}
    \node(s1)(0,0){}
    \node(s2)(5,0){}
    \drawrect[dash={0.5 1}0,Nfill=n,Nframe=y](-2,-2,7,2)
    \node(s3)(20,0){}
    \drawedge[dash={0.2 1.5}0, sxo=4,exo=-4](s2,s3){} 
    \node(s4)(25,0){}
    \drawrect[dash={0.5 1}0,Nfill=n,Nframe=y](18,-2,27,2)
    \node(s5)(30,0){}
    \drawrect[dash={0.5 1}0,Nfill=n,Nframe=y](28,-2,32,2)
    \node(s6)(35,0){}
    \drawrect[dash={0.5 1}0,Nfill=n,Nframe=y](33,-2,37,2) 
    \node(s7)(40,0){}
    \node(s8)(45,0){}
    \drawrect[dash={0.5 1}0,Nfill=n,Nframe=y](38,-2,47,2)
    \node(s9)(60,0){}
    \drawedge[dash={0.2 1.5}0, sxo=4,exo=-4](s8,s9){} 
    \node(s10)(65,0){}
    \drawrect[dash={0.5 1}0,Nfill=n,Nframe=y](58,-2,67,2)
    
    \drawline[linewidth=0.25](0,-2.8)(0,-9)(65,-9)(65,-2.8)
    \drawline[linewidth=0.25](5,-2.8)(5,-6)(10,-6)(10,-2.8)
    \drawline[linewidth=0.25](15,-2.8)(15,-6)(20,-6)(20,-2.8)
    \drawline[linewidth=0.25](45,-2.8)(45,-6)(50,-6)(50,-2.8)
    \drawline[linewidth=0.25](55,-2.8)(55,-6)(60,-6)(60,-2.8)

    \drawline[linewidth=0.05](25,-2.8)(25,-6)(30,-6)(30,-2.8)
    \drawline[linewidth=0.05](35,-2.8)(35,-6)(40,-6)(40,-2.8)
     
    \end{picture}
\caption{\label{fig proof 5}}
\end{center}
\end{figure}

\noindent Since the two singletons cannot pair together (as that block would be isolated), there is only one non-crossing pairing, given by the light lines in Figure \ref{fig proof 5}.  Each of the dark lines gives a contribution $\kk_2[c^\ast,c]$ (or $\kk_2[c,c^\ast]$ for the outside pairing), yielding $1$.  The remaining pairings are $\kk_2[c^\ast,\alpha_1]$ and $\kk_2[\alpha_1,c]$.  Hence, the index $\mx{i} = 2^{\e_1}\,1\,1\,2^{\e_3}$ with $\e_1,\e_3>0$ yields $\kk_n[\alpha_\mx{i}] = \kk_2[c^\ast,\alpha_1]\cdot\kk_2[\alpha_1,c]$, which is non-zero (we calculate it below).

\medskip

\noindent Finally, we consider the case that only one of $\e_1,\e_2,\e_3$ is non-zero.  (The case that all three vanish means that $n=2$, and we will consider that case separately at the end.)  Each of these three cases is really just a rotation of the one non-zero contributing case above: that is, a cyclic permutation of the string $\mx{i} = 2^{\e_1}\,1\,1\,2^{\e_3}$ already considered.  There are a total of $n$ such permutations, and each contributes the same cumulant (this follows from the fact that cyclic permutations induce lattice-isomorphisms of $NC(|\mx{i}|)$). Hence, each of these $n$ contributes the term $\kk_2[c^\ast,\alpha_1]\cdot\kk_2[\alpha_1,c]$.  This completes the proof of Lemma \ref{lem adjacent 1s}.

\medskip

Let us now collect all terms contributing to Equation \ref{eq expand k}.  For $n>2$, Equation \ref{eq endpoint 1}  yields that if $\mx{i}$ contains only $1$s then there is no contribution to the $n$th cumulant, and Equation \ref{eq endpoint 2} gives a contribution of $1$ in the case that $\mx{i}$ contains only $2$s.  Lemmas \ref{lem 2 1s} and \ref{lem adjacent 1s} then show that among all other $\mx{i}$, only $n$ contribute a non-zero cumulant, each equal to the product $\kk_2[c^\ast,\alpha_1]\cdot\kk_2[\alpha_1,c]$, which we now calculate:
\[ \begin{aligned} \kk_2[c^\ast,\alpha_1] &= \kk_2[c^\ast,-\ll(c+c^\ast)] = -\ll\left(\kk_2[c^\ast,c]+\kk_2[c^\ast,c^\ast]\right) = -\ll \\
\kk_2[\alpha_1,c] &= \kk_2[-\ll(c+c^\ast),c] = -\ll\left(\kk_2[c,c] + \kk_2[c^\ast,c]\right) = -\ll. \end{aligned} \]
Hence, the total contribution is
\begin{equation} \label{eq nth cumulant} \kk_n[\alpha_1+\alpha_2,\ldots,\alpha_1+\alpha_2] = 1+n\ll^2, \quad n>2. \end{equation}
For $n=1$, $\kk_1$ is the mean; $\kk_1[c] = \kk_1[c^\ast]  =0$, and so $\kk_1[\alpha_1+\alpha_2] = \kk_1[cc^\ast] =1$.  The second cumulant nearly fits into the above analysis, but can be handled separately more easly:
\[ \kk_2[\alpha_1+\alpha_2,\alpha_1+\alpha_2] = \kk_2[\alpha_1,\alpha_1] + \kk_2[\alpha_1,\alpha_2] + \kk_2[\alpha_2,\alpha_1] + \kk_2[\alpha_2,\alpha_2]. \]
The two middle terms are odd, and since odd cumulants of $c,c^\ast$ are $0$, these cumulants are $0$.  The first and last terms are included in Equations \ref{eq endpoint 1} and \ref{eq endpoint 2}, and yield $2\ll^2$ and $1$, respectively.
Thus, from Equation \ref{eq c R-transform 2} we have
\[ \mathscr{R}_{\alpha_1+\alpha_2}(z) = 1 + (1+2\ll^2)z + \sum_{n\ge 3} (1+n\ll^2)z^{n-1}, \]
and so
\[ \mathscr{R}_{|\ll-c|^2}(z) = \ll^2 + 1 + (1+2\ll^2)z + \sum_{n\ge 3} (1+n\ll^2)z^{n-1} = \sum_{n\ge 1}(1+n\ll^2)z^{n-1}, \]
yielding the power-series expansion of Equation \ref{eq circular R} as required.
\end{proof}

\subsection{The support of the spectral measure of $|\ll-c|^2$} \label{sect circular support}

Denote by $K_{|\ll-c|^2}$ the function
\[ K_{|\ll-c|^2}(z) = \mathscr{R}_{|\ll-c|^2}(z)+1/z.\]
From Equation \ref{eq G-R functional eqn}, $G_{\mu_\ll}(K_{|\ll-c|^2}(z)) = z$ for small $z\in\C_+$, where $G_{\mu_\ll}$ is the Cauchy transform of the spectral measure $\mu_\ll$ of $|\ll-c|^2$.  The result of Theorem \ref{thm circular R} yields
\begin{equation} \label{eq K} K_{|\ll-c|^2}(z) = \frac{1}{z} + \frac{1}{1-z} + \frac{\ll^2}{(1-z)^2} = \frac{1+(\ll^2-1)z}{z(1-z)^2}. \end{equation}
For notational convenience, let
\begin{equation} \label{eq m} m = \ll^2-1 \qquad K_m \equiv K_{|\ll-c|^2} \qquad G_m = G_{\mu_\ll}. \end{equation}
So $G_m\circ K_m(z) = z$ for small $z\in\C_+$.  Our goal is to determine the support of the measure $\mu_\ll$.  Note that this support set is precisely the set of singular points for the Cauchy transform $G_m$, which we now set out to determine.  The first derivative of $K_m(z)$ is given by
\begin{equation} \label{eq Km'} K_m'(z) = \frac{1-3z-2mz^2}{z^2(z-1)^3}. \end{equation}
The quadratic polynomial in the numerator has two zeroes,
\[ z^{\pm} = \frac{-3\pm \sqrt{9+8m}}{4m}. \]
Since $m=\ll^2-1>0$ it is easy to check that
\[ z^-\in(-\infty,0) \quad \text{and} \quad z^+\in(0,1). \]
Moreover, by factoring the polynomial in the numerator of Equation \ref{eq Km'}, one has
\[ K_m'(z) = -\frac{2m(z-z^-)(z-z^+)}{z^2(z-1)^3} \]
which shows that
\[ K_m'(z)<0 \quad \text{for} \quad z\in(z^-,0)\cup(0,z^+). \]
Thus $K_m'$ is a strictly decreasing function on each of the intervals $(z^-,0)$ and $(0,z^+)$.  Now, set $s^\pm \equiv K_m(z^\pm)$; then simple (though tedious) calculation yields
\begin{equation} \label{eq s} 
s^\pm = 
\frac{27+36m+8m^2\pm(9+8m)^{3/2}}{8(m+1)}.
\end{equation}
Since $\lim_{z\to 0\pm} K_m(z) = \pm\infty$, $K_m$ is a decreasing bijection of $(z^-,0)$ onto $(-\infty,s^-)$ and $K_m$ is also a decreasing bijection of $(0,z^+)$ onto $(s^+,\infty)$.  Moreover since $s^-<s^+$ by Equation \ref{eq s}, $K_m$ is a bijection of $(z^-,0)\cup(0,z^+)$ onto $(-\infty,s^-)\cup(s^+,\infty)$.  Let
\[ L_m\colon (-\infty,s^-)\cup(s^+,\infty)\to(z^-,0)\cup(0,z^+) \]
denote the (function) inverse of the above restriction of $K_m$.  Then $L_m(w) = G_m(w)$ for large values of $|w|$.  Moreover, $L_m$ is real analytic, and can therefore be extended to a complex analytic function $\tilde{L}_m$ in a complex neighbourhood  $U$ of $(-\infty,s^-)\cup(s^+,\infty)$, which we can assume has only two connected components $U^-\supset (-\infty,s^-)$ and $U^+\supset (s^+\infty)$.  By uniqueness of analytic continuation to open connected sets, it follows that $\tilde{L}_m(w) = G_m(w)$ for all $w\in U$.  In particular, $G_m$ has no singular points in $(-\infty,s^-)\cup(s^+,\infty)$.  Ergo, it follows that
\begin{equation} \label{eq [s-,s+]} \supp \mu_\ll \subseteq [s^-,s^+]. \end{equation}
Since $K_m'(z^\pm)=0$, the graph of $L_m$ has vertical tangents at the endpoints $(s^-,z^-)$ and $(s^+,z^+)$.  Therefore $s^\pm$ are both singular points for $G_m$, and we conclude that
\begin{equation} \label{eq s+-} s^\pm\in \supp\mu_\ll. \end{equation}
To prove that $\supp\mu_\ll = [s^-,s^+]$, we apply the result of Voiculescu \cite{Voiculescu97} which implies that the unital $C^\ast$-algebra generated by a semicircular family $(s_j)_{j\in J}$ has no non-trivial projections.  Therefore the spectrum $\mathrm{spec}(x)$ of any selfadjoint element $x\in C^\ast\left(\{s_j\,;\,j\in J\}\cup\{1\}\right)$ is connected, and therefore is either an interval or  single point. The standard circular operator $c$ is equal to
\[ c = \frac{1}{\sqrt{2}}(s_1+is_2) \]
where $s_1 = \frac{1}{\sqrt{2}}(c+c^*),s_2 = \frac{1}{\sqrt{2}i}(c-c^*)$ is a semicircular family with two elements.  Hence,
\[ \supp\mu_\ll = \mathrm{spec}\left((\ll-c)^*(\ll-c)\right) \]
is either an interval or a single point; it now follows from Equations \ref{eq [s-,s+]} and \ref{eq s+-} that $\supp \mu_\ll = [s^-,s^+]$.  In particular, $s^-$ is the infimum of support of $\mu_\ll$.  Substituting $\ll^2-1$ for $m$ in Equation \ref{eq s}, we have the following.

\begin{proposition} \label{prop inf spec} Let $c$ be a standard circular operator, and let $\ll>1$.  Then\begin{equation} \label{eq inf spec} \inf\mathrm{spec}\,|\ll-c|^2 = s^- = \frac{8\ll^4+20\ll^2-1-(8\ll^2+1)^{3/2}}{8\ll^2}. \end{equation}
\end{proposition}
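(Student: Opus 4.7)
The statement is essentially a direct corollary of the analysis carried out in Section \ref{sect circular support} above the proposition. The plan has two parts: first observing that the infimum of the spectrum equals $s^-$, and second performing the algebraic substitution $m=\ll^2-1$ in Equation \ref{eq s}.

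For the first part, the preceding discussion has already done all the real work. We have the two-sided containment $\supp\mu_\ll\subseteq[s^-,s^+]$ from Equation \ref{eq [s-,s+]}, the fact that the endpoints are actually attained (Equation \ref{eq s+-}), and the crucial connectedness argument via Voiculescu's theorem on $C^\ast$-algebras generated by semicirculars having no nontrivial projections. Combined, these give $\supp\mu_\ll = [s^-,s^+]$, hence $\inf\mathrm{spec}\,|\ll-c|^2 = s^-$. So for the proof of the proposition proper, I would simply invoke these facts in one or two sentences.

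The second part is the routine but slightly delicate simplification. Setting $m = \ll^2-1$, I compute $9+8m = 8\ll^2+1$, so $(9+8m)^{3/2} = (8\ll^2+1)^{3/2}$; also $8(m+1) = 8\ll^2$; and finally
\[
27+36m+8m^2 = 27 + 36(\ll^2-1) + 8(\ll^2-1)^2 = 8\ll^4 + 20\ll^2 - 1.
\]
Feeding these three substitutions into the expression for $s^-$ from Equation \ref{eq s} produces the right-hand side of Equation \ref{eq inf spec} directly. There is no obstacle to speak of here; it is bookkeeping.

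In short, the only step that required any substance was establishing that $\supp\mu_\ll$ is exactly $[s^-,s^+]$ rather than some smaller subset, and this was handled just above the proposition statement via the combination of the $\mathscr{R}$-transform computation (Theorem \ref{thm circular R}), analytic continuation of the inverse of $K_m$, and Voiculescu's projection-free result. The proposition itself is then a clean restatement with $\ll$ in place of $m$.
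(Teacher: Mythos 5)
Your proposal is correct and matches the paper's own treatment: the paper establishes $\supp\mu_\ll=[s^-,s^+]$ in the discussion preceding the proposition (via Equations \ref{eq [s-,s+]} and \ref{eq s+-} together with Voiculescu's projection-free result) and then obtains Equation \ref{eq inf spec} precisely by substituting $m=\ll^2-1$ into Equation \ref{eq s}. Your algebraic simplifications $9+8m=8\ll^2+1$, $8(m+1)=8\ll^2$, and $27+36m+8m^2=8\ll^4+20\ll^2-1$ are all correct, so nothing further is needed.
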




\noindent Proposition \ref{prop inf spec} yields an exact formula for the norm $\|R_c(\ll)\| = \|(\ll-c)^{-1}\|$: it is the reciprocal of the square root of the expression in Equation \ref{eq inf spec}, as discussed following Equation \ref{eq T}.  We are primarily concerned with the leading order terms in this expression.  It is easy to calculate the Taylor expansion of the function in Equation \ref{eq inf spec}.  The result is
\begin{equation} \label{eq taylor} \inf\mathrm{spec}|\ll-c|^2 = \frac{32}{27}(\ll-1)^3 + O((\ll-1)^4). \end{equation}
Taking the reciprocal square root, and noting that $\|c\|_2=1$ and $\|c\|_4^4 = 2$ so that $\upsilon(c) = 1$, Equation \ref{eq taylor} proves Theorem \ref{main theorem} in the special case that the $\mathscr{R}$-diagonal operator $a$ is a circular $c$.   

\medskip

\noindent It is possible to compute the Cauchy transform $G_m$ of $\mu_\ll$ explicitly using Cardano's formula for solving cubic equations: for $w\in\C-\R$, the number $z=G_m(w)$ is a solution to the equation $K_m(z)=w$, which can be reduced to the following cubic equation in $z$:
\[ z^3 - 2z^2 + \left(1-\frac{m}{w}\right)z - \frac{1}{w} = 0. \]
After determining the correct branch among the three solutions, one can then use the Stieltjes inversion formula of Equation \ref{eq Stieltjes inversion} to show that $\mu_\ll$ has a density with respect to Lebesgue measure, and compute this density explicitly.  Figures \ref{fig densities} and \ref{fig inverse densities} below are based on such computations.
\begin{figure}[htbp]
\begin{center}
\includegraphics[scale=0.27]{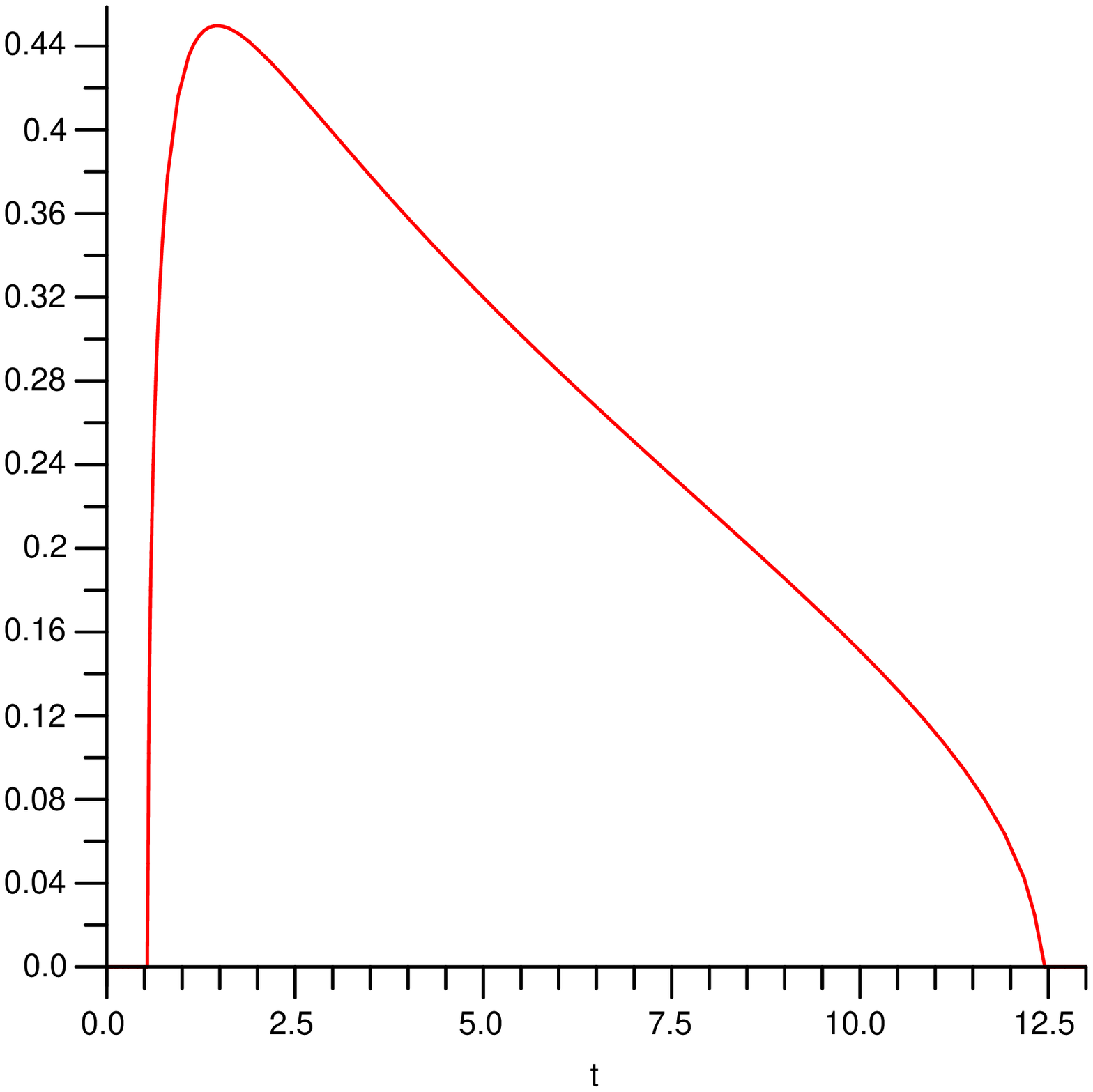}
\includegraphics[scale=0.27]{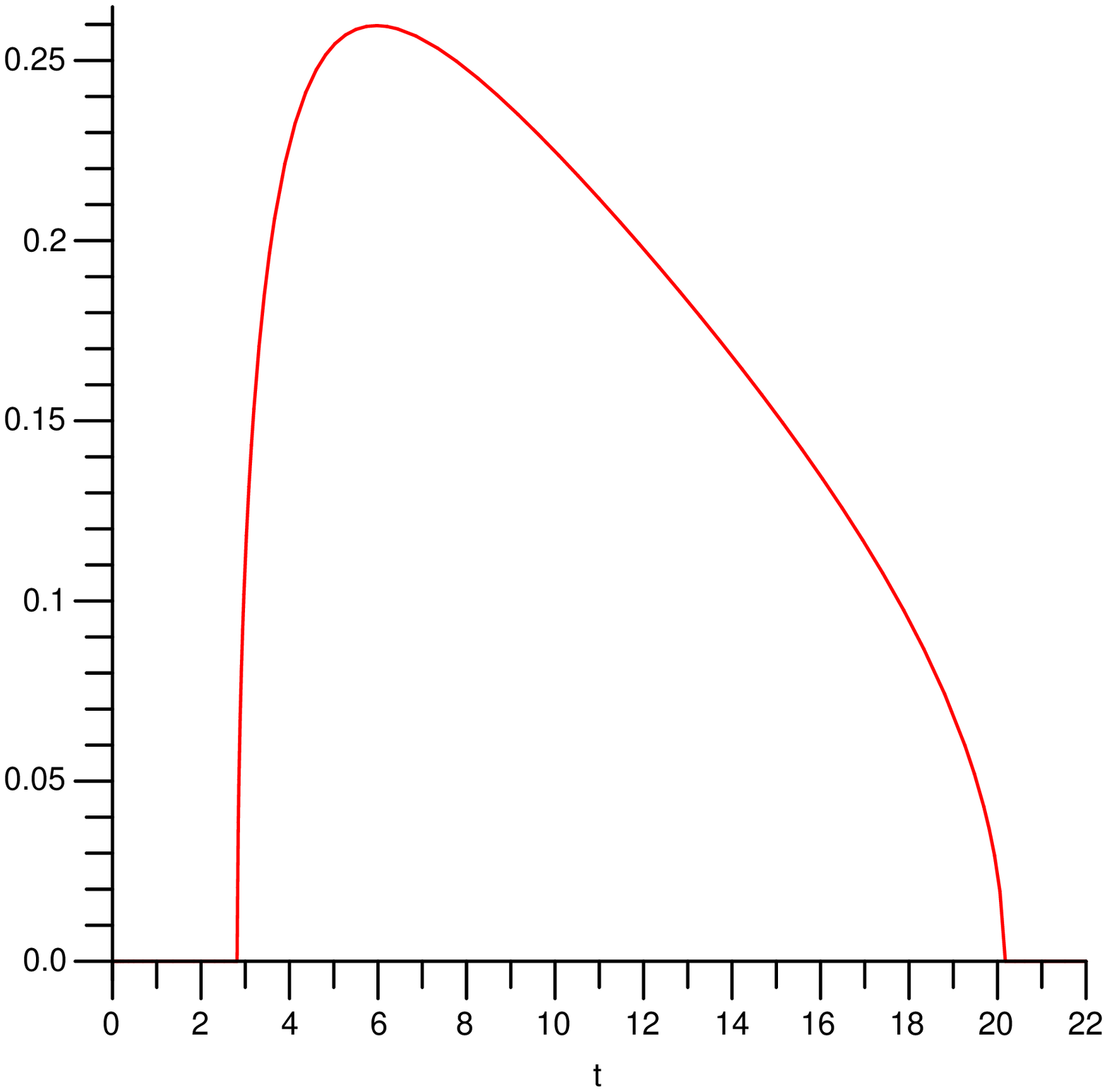}
\includegraphics[scale=0.27]{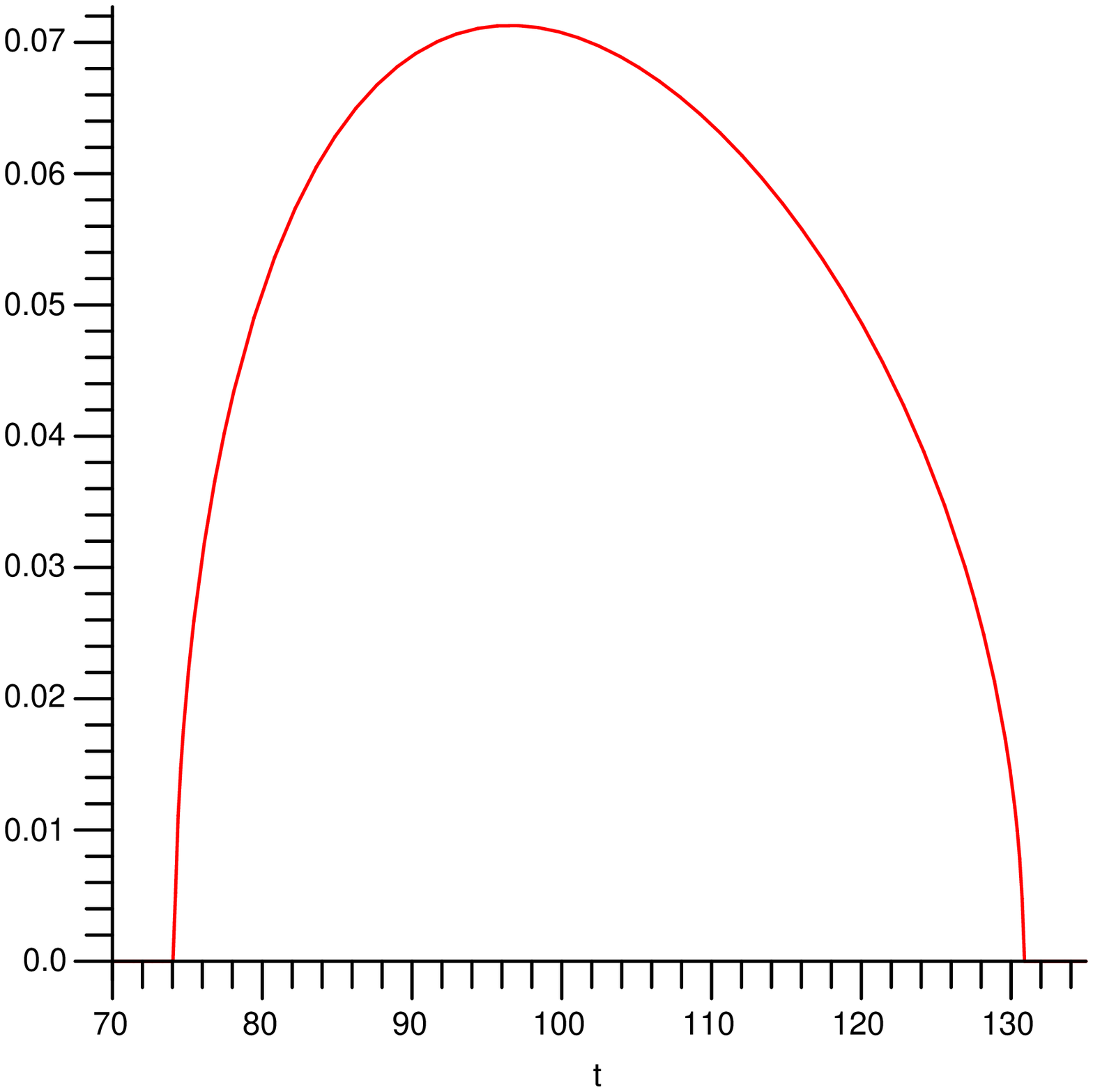}
\caption{\label{fig densities} The densities of the measures $\mu_\ll(dt)$, for $\ll=2$, $3$, and $10$, from left to right.}
\end{center}
\end{figure}

\begin{figure}[htbp]
\begin{center}
\includegraphics[scale=0.27]{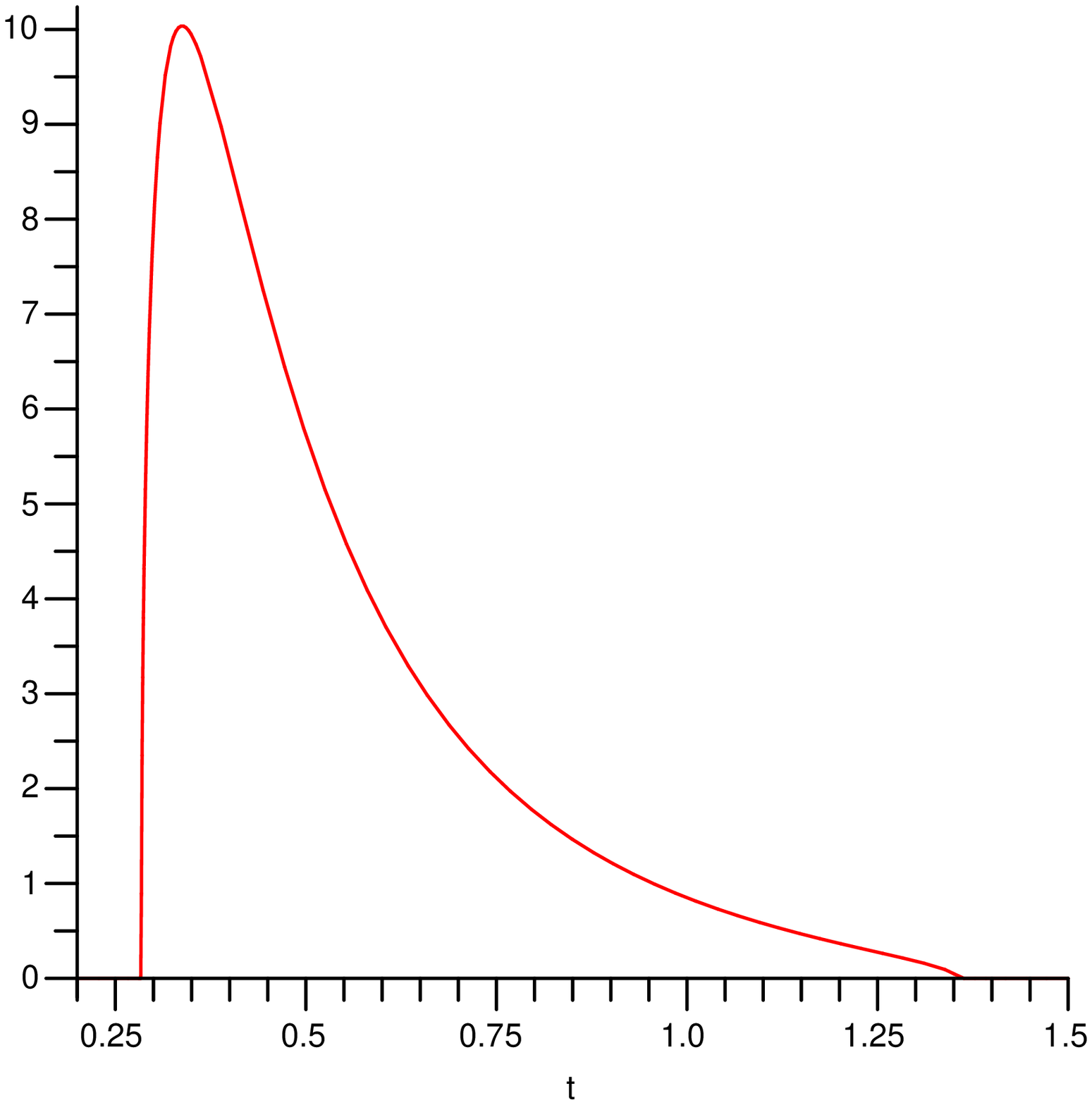}
\includegraphics[scale=0.27]{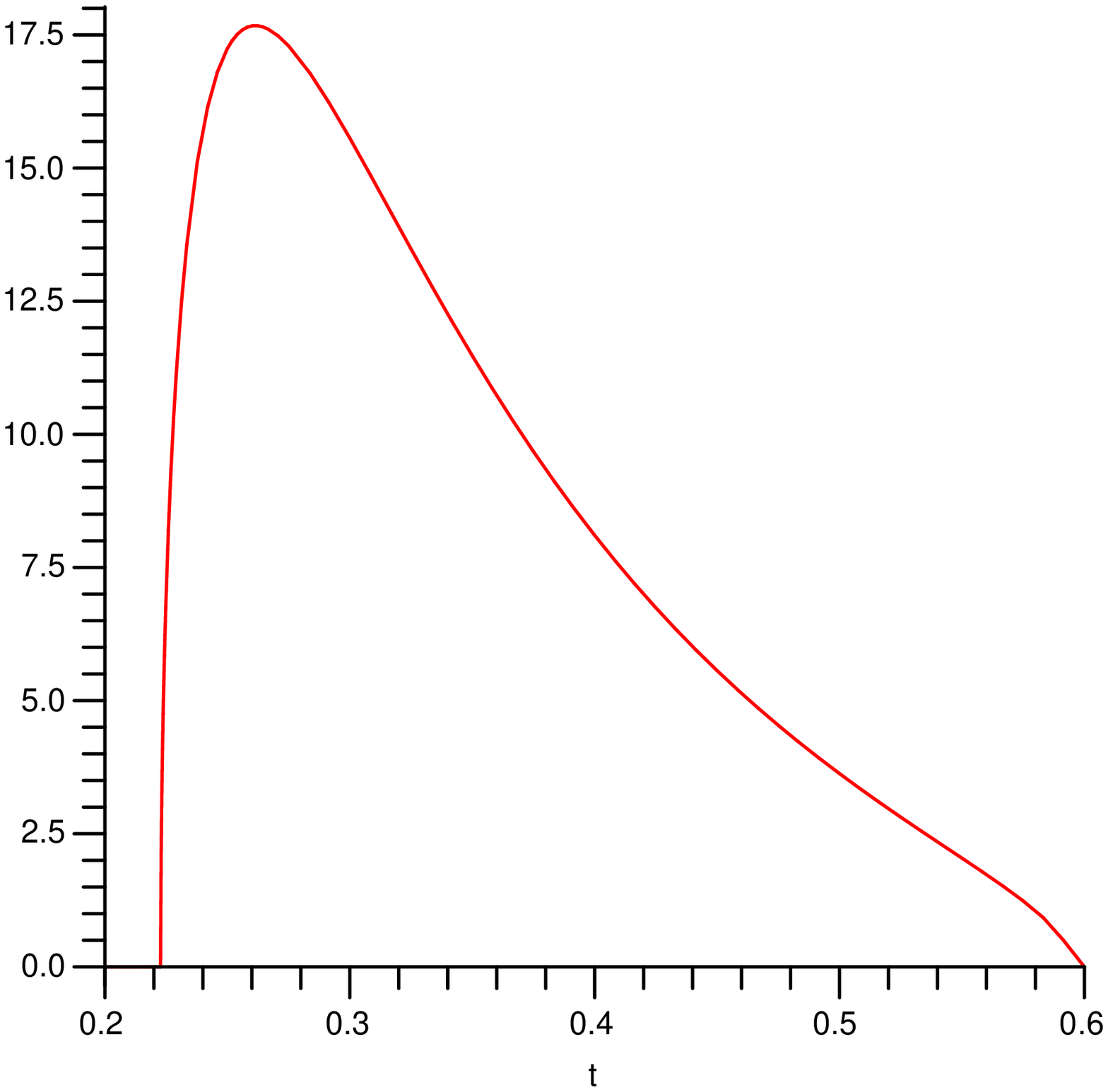}
\includegraphics[scale=0.27]{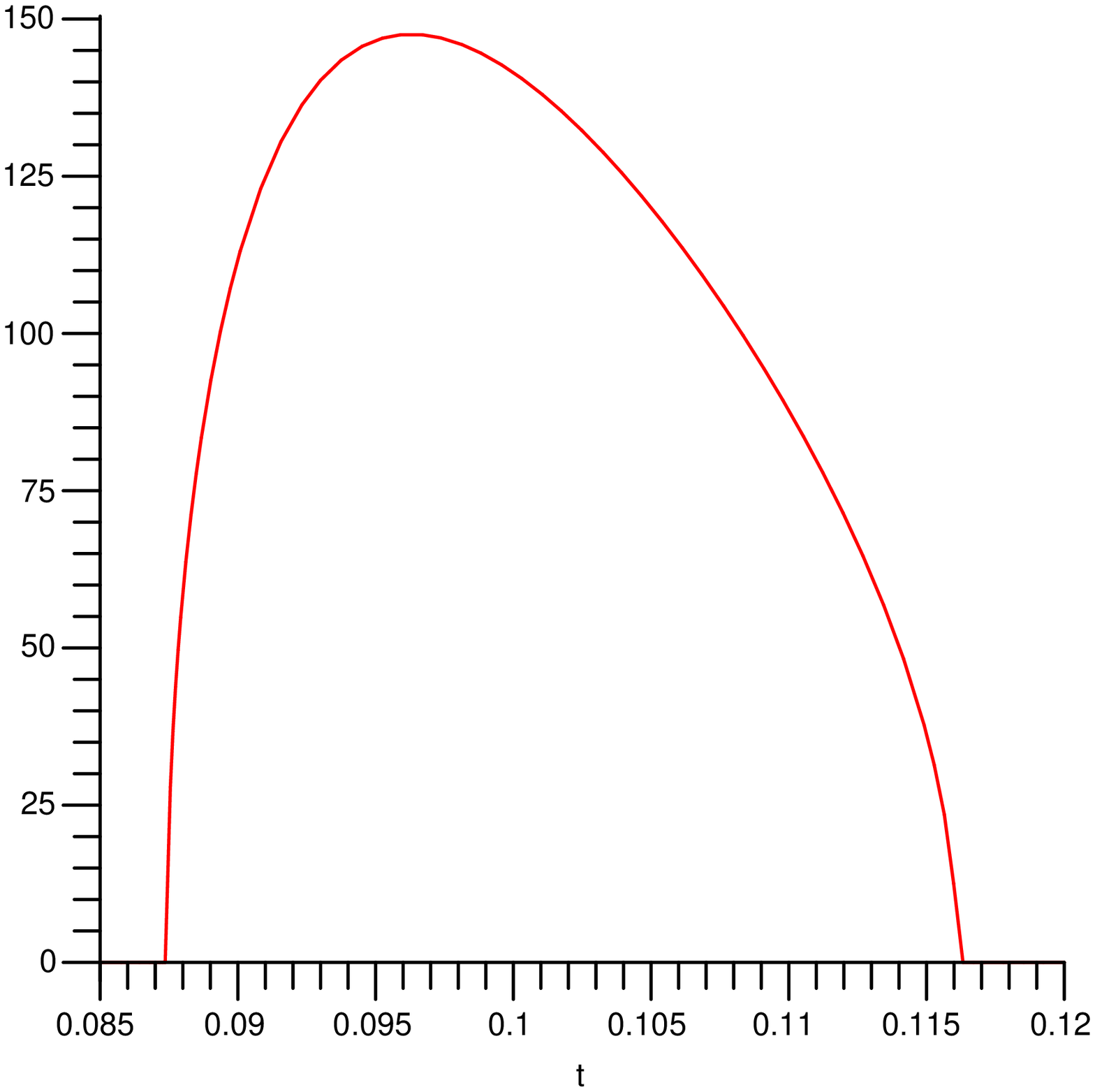}
\caption{\label{fig inverse densities} The densities of the spectral measures $(1/\sqrt{\cdot}\,)_\ast\mu_\ll(dt)$ of the operators $|\ll-c|^{-1}$ for $\ll=2$, $3$, and $10$, from left to right.  Note that $|R_c(\ll)|^2 = [(\ll-c^\ast)(\ll-c)]^{-1}$ has the same distribution as $[(\ll-c)(\ll-c^\ast)]^{-1} = |\ll-c|^{-2}$ since the von Neumann algebra generated by $c$ is tracial; hence the density of $|\ll-c|^{-1}$ is the same as the density of $|R_c(\ll)|$.}
\end{center}
\end{figure}

\section{Resolvents in the General $\mathscr{R}$-Diagonal Case} \label{sect general case}

This section is largely devoted to the proof of Theorem \ref{main theorem}.  The direct calculations used in Section \ref{sect circular} are not available in this case: for a general $\mathscr{R}$-diagonal operator $a$, it is far more difficult to find a closed-formula for the $\mathscr{R}$-transform of $|\ll-a|^2$.
Nevertheless, in Section \ref{sect circular support}, we determined the support of the measure through critical points, following \cite{Haagerup Schultz}; through a similar approach, we will be able to calculate the norm of $R_a(\ll)$ to leading order as it tends to $\infty$, for any $\mathscr{R}$-diagonal operator $a$.

\medskip

Let $a$ be $\mathscr{R}$-diagonal in a $\mathrm{II}_1$-factor $\mathscr{A}$.  Expanding $\mathscr{A}$ if necessary, we may choose a Haar unitary $u$ $\ast$-free from $a$.  It is easy to check that $au$ has the same $\ast$-distribution as $a$; indeed, this can be used as a definition for $\mathscr{R}$-diagonality (cf. \cite{Nica Speicher Book}).  As such, for $\ll>0$,
\begin{equation} \label{eq same distribution} |a-\ll|^2 \mathop{\sim}_{\ast\mathcal{D}} |au-\ll|^2 = (au-\ll)(u^\ast a^\ast - \ll) = (a-\ll u^\ast) uu^\ast(a^\ast-\ll u) = |a-\ll u^\ast|^2. \end{equation}
Hence, the spectral measure of $|a-\ll|$ is the same as that of $|a-\ll u^\ast|$, and $(a,\ll u^\ast)$ are $\ast$-free $\mathscr{R}$-diagonal elements.  We may now employ the following tool for calculating the $\mathscr{R}$-transform of a sum.

\begin{proposition} Let $a,b$ be $\ast$-free $\mathscr{R}$-diagonal elements, and for any self-adjoint element $x$ let $\mu_x$ denote its spectral measure.  For any Borel probability measure $\mu$ on $\R$, denote by $\tilde{\mu}$ the symmetrization of $\mu$: for any Borel set $B\subseteq\R$, $\tilde{\mu}(B) = \frac{1}{2}(\mu(B) + \mu(-B))$.  Then
\begin{equation} \label{eq symmetric sum} \tilde{\mu}_{|a+b|} = \tilde{\mu}_{|a|} \boxplus \tilde{\mu}_{|b|}. \end{equation}
\end{proposition}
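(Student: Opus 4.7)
The plan is to reduce identity (\ref{eq symmetric sum}) to a single free-cumulant equation for one $\mathscr{R}$-diagonal element. Set
\[
\alpha_n(a) := \kappa_{2n}[a,a^\ast,a,a^\ast,\ldots,a,a^\ast] = \kappa_{2n}[a^\ast,a,a^\ast,a,\ldots,a^\ast,a],
\]
the two being equal by the rotational symmetry of the $\ast$-distribution built into $\mathscr{R}$-diagonality. The intermediate claim to prove is
\[
\tilde\kappa_{2n}(\tilde\mu_{|a|}) = \alpha_n(a), \qquad \tilde\kappa_{2n-1}(\tilde\mu_{|a|}) = 0,
\]
the vanishing of odd cumulants being automatic from symmetry of $\tilde\mu_{|a|}$.

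To establish the cumulant identity, I compute the even moments $\int t^{2n}\,d\tilde\mu_{|a|}(t) = \phi((a^\ast a)^n)$ two ways. The $\mathscr{R}$-diagonal moment-cumulant formula (\ref{eq R-diag moment cumulant}) gives
\[
\phi((a^\ast a)^n) = \sum_{\pi\in NC_{\mathrm{alt}}(2n)} \kappa_\pi[a^\ast,a,\ldots,a^\ast,a],
\]
where $NC_{\mathrm{alt}}(2n)$ denotes non-crossing partitions of $\{1,\ldots,2n\}$ each of whose blocks alternates between odd and even positions. The standard moment-cumulant formula applied to the symmetric measure $\tilde\mu_{|a|}$ gives
\[
\int t^{2n}\,d\tilde\mu_{|a|}(t) = \sum_{\sigma\in NC_{\mathrm{even}}(2n)} \prod_{B\in\sigma} \tilde\kappa_{|B|}(\tilde\mu_{|a|}),
\]
where $NC_{\mathrm{even}}(2n)$ consists of non-crossing partitions whose blocks all have even size. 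The bridge between the two is the combinatorial fact $NC_{\mathrm{alt}}(2n) = NC_{\mathrm{even}}(2n)$: in an even-blocked non-crossing partition, if some block held two consecutive members $i<j$ of the same parity, the $j-i-1$ positions trapped strictly between them would be an odd number of points that must themselves form an even-blocked partition, impossible; conversely every alternating block obviously has even size. Combined with the $\mathscr{R}$-diagonality observation that every alternating cumulant of length $2k$ equals $\alpha_k(a)$ regardless of its starting parity, one obtains $\kappa_\pi[a^\ast,a,\ldots] = \prod_B \alpha_{|B|/2}(a)$, and induction on $n$ (the top-block term $\sigma = 1_{2n}$ isolates $\tilde\kappa_{2n}$ from the smaller pieces) yields $\tilde\kappa_{2n}(\tilde\mu_{|a|}) = \alpha_n(a)$.

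The proposition now follows from $\ast$-freeness in a few lines. Multilinearity of $\kappa_{2n}$ together with vanishing of all mixed cumulants between $\{a,a^\ast\}$ and $\{b,b^\ast\}$ gives
\[
\alpha_n(a+b) = \kappa_{2n}[a+b,(a+b)^\ast,\ldots,a+b,(a+b)^\ast] = \alpha_n(a) + \alpha_n(b),
\]
and the same expansion shows that every non-alternating cumulant in $a+b,\,(a+b)^\ast$ still vanishes, so $a+b$ is itself $\mathscr{R}$-diagonal (as already asserted in the background section). Applying the cumulant identity to $a+b$,
\[
\tilde\kappa_{2n}(\tilde\mu_{|a+b|}) = \alpha_n(a+b) = \tilde\kappa_{2n}(\tilde\mu_{|a|}) + \tilde\kappa_{2n}(\tilde\mu_{|b|}),
\]
i.e.\ $\mathscr{R}_{\tilde\mu_{|a+b|}} = \mathscr{R}_{\tilde\mu_{|a|}} + \mathscr{R}_{\tilde\mu_{|b|}}$, which is precisely (\ref{eq symmetric sum}).

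The main obstacle is the combinatorial identification $NC_{\mathrm{alt}}(2n) = NC_{\mathrm{even}}(2n)$ together with the observation that $\mathscr{R}$-diagonality makes each block contribute a common value regardless of its starting parity; everything else is routine multilinear algebra. A seemingly more geometric alternative would be to lift the problem to the matrix model $\tilde a = \bigl(\begin{smallmatrix} 0 & a\\ a^\ast & 0\end{smallmatrix}\bigr)\in M_2(\mathscr{A})$, whose spectral measure is $\tilde\mu_{|a|}$, and deduce the claim from freeness of $\tilde a$ and $\tilde b$; but proving that freeness requires essentially the same combinatorial input, so the direct cumulant argument is the more economical route.
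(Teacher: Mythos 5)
Your argument is correct, but note that the paper does not actually prove this proposition: its ``proof'' consists of pointers to Proposition 5.2 of \cite{Nica Speicher Duke Paper} and Proposition 3.5 of \cite{Haagerup Larsen}. What you have written is essentially a self-contained version of the Nica--Speicher combinatorial route: identify the free cumulants of the symmetrized singular-value distribution with the alternating $\ast$-cumulants (the determining sequence) of the $\mathscr{R}$-diagonal element, observe that this sequence is additive over $\ast$-free elements and that the free sum is again $\mathscr{R}$-diagonal, and conclude that the $\mathscr{R}$-transforms of $\tilde{\mu}_{|a|}$, $\tilde{\mu}_{|b|}$, $\tilde{\mu}_{|a+b|}$ add. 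Your key lemma $\tilde{\kappa}_{2n}(\tilde{\mu}_{|a|})=\alpha_n(a)$ is proved soundly: the parity argument (two consecutive same-parity members of a block would trap an odd number of points, which by non-crossingness must be covered by even blocks inside the interval) is correct, and the induction isolating $\tilde{\kappa}_{2n}$ via the one-block partition works because the remaining terms run over the identical index set with identical factors. The Haagerup--Larsen proof cited by the paper is analytic and closer in spirit to the $2\times 2$ Hermitization you mention and set aside at the end; the paper's later arguments (Section 3) in fact lean on that analytic machinery, but for this proposition your cumulant proof is a perfectly good, and arguably more elementary, substitute.

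Two small repairs. First, the equality $\kappa_{2n}[a,a^\ast,\ldots,a,a^\ast]=\kappa_{2n}[a^\ast,a,\ldots,a^\ast,a]$ does not follow from rotational invariance of the $\ast$-distribution (a rotation multiplies either cumulant by $|e^{i\theta}|^{2n}=1$ and yields no relation between the two); it follows from traciality, i.e.\ cyclic invariance of free cumulants with respect to a trace, which is exactly how the paper justifies the same identity in its Section 5 notation. You genuinely need this, since blocks beginning at odd and at even positions produce the two different alternating patterns. Second, the asserted set identity between alternating and even-blocked non-crossing partitions of $\{1,\ldots,2n\}$ is not literally true: a block such as $\{1,2,3\}$ alternates in parity but has odd size. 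What is true, and all you need, is that the two sums agree, because any partition containing an odd block contributes zero on both sides (odd alternating $\ast$-cumulants of an $\mathscr{R}$-diagonal element vanish, as do odd cumulants of the symmetric measure). Neither point affects the validity of the argument.
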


\begin{proof} This is proved in Proposition 5.2 in \cite{Nica Speicher Duke Paper}.  A different proof is given in Proposition 3.5 in \cite{Haagerup Larsen}.
\end{proof}

Applying Equation \ref{eq symmetric sum} to the preceding discussion, we have
\[ \tilde{\mu}_{|a-\ll|} = \tilde{\mu}_{|a - \ll u^\ast|} = \tilde{\mu}_{|a|}\boxplus\tilde{\mu}_{|\ll u^\ast|}. \]
Of course, $|\ll u^\ast| = \ll$ for $\ll>0$, and so $\tilde{\mu}_{|\ll u^\ast|} = \frac{1}{2}(\delta_{\ll} + \delta_{-\ll})$.  Therefore, the associated Cauchy transform is $\frac{1}{2}(\frac{1}{z-\ll} + \frac{1}{z+\ll}) = \frac{z}{z^2-\ll^2}$.  Solving the equation $w=\frac{z}{z^2-\ll^2}$ for $z$ yields $z = \frac{1\pm\sqrt{1+4\ll^2 w^2}}{2w}$, and so Equation \ref{eq G-R functional eqn} yields that
\[ \mathscr{R}_{\tilde{\mu}_{|\ll u^\ast|}}(w) = \frac{1+\sqrt{1+4\ll^2 w^2}}{2w} - \frac{1}{w} = \frac{\sqrt{1+4\ll^2 w^2}-1}{2w} \]
(the sign of the square root is chosen so that $\mathscr{R}$ maps $\C_+$ into itself).  Employing, finally, the additivity of the $\mathscr{R}$-transform over free convolution, we have the following.

\begin{proposition} \label{prop R transform sum} Let $a$ be $\mathscr{R}$-diagonal.  Denote by $\mu$ the symmetrization $\tilde{\mu}_{|a|}$, and let $\mu_\ll$ denote the symmetrization $\tilde{\mu}_{|a-\ll|}$.  Then
\begin{equation} \label{eq R transform sum} \mathscr{R}_{\mu_\ll}(z) = \mathscr{R}_{\mu}(z) + \frac{\sqrt{1+4\ll^2 z^2}-1}{2z}, \quad \text{for small }z\in\C_+.\end{equation}
\end{proposition}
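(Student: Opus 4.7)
The proposition is really a clean repackaging of the discussion that precedes it, so my plan will follow that thread in order, making explicit the four ingredients that need to be combined.

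First, I would invoke the fact (standard, and recalled in the paragraph above Equation \ref{eq same distribution}) that $\mathscr{R}$-diagonality of $a$ is equivalent to $au$ having the same $\ast$-distribution as $a$, where $u$ is a Haar unitary $\ast$-free from $a$ in a possibly enlarged tracial von Neumann algebra. The key point is that such a $u$ can always be adjoined without altering the joint distribution of $a$. Using traciality of $\phi$ and the fact that $uu^\ast=1$, the chain of identities in Equation \ref{eq same distribution} shows $|a-\ll|^2$ and $|a-\ll u^\ast|^2$ have the same $\ast$-distribution, hence the same spectral measure. In particular $\tilde\mu_{|a-\ll|} = \tilde\mu_{|a-\ll u^\ast|}$.

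Second, since $a$ and $\ll u^\ast$ are $\ast$-free and both $\mathscr{R}$-diagonal (the latter because any scalar multiple of a Haar unitary is $\mathscr{R}$-diagonal), the symmetrization identity \eqref{eq symmetric sum} applies to their difference (formally $a+b$ with $b=-\ll u^\ast$, and the symmetrization kills the sign):
\[ \tilde\mu_{|a-\ll u^\ast|} = \tilde\mu_{|a|}\boxplus \tilde\mu_{|\ll u^\ast|} = \mu\boxplus\tilde\mu_{|\ll u^\ast|}. \]

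Third, I would compute $\tilde\mu_{|\ll u^\ast|}$ and its $\mathscr{R}$-transform explicitly. Because $u$ is unitary, $|\ll u^\ast|=\ll$, so $\mu_{|\ll u^\ast|}=\delta_\ll$ and its symmetrization is $\tfrac12(\delta_\ll+\delta_{-\ll})$. Its Cauchy transform is $G(z) = \tfrac{z}{z^2-\ll^2}$. Solving the quadratic $w(z^2-\ll^2)=z$ for $z$ and taking the branch that makes $\mathscr{R}$ map $\C_+$ into itself gives $z = \frac{1+\sqrt{1+4\ll^2 w^2}}{2w}$, from which the defining relation \eqref{eq G-R functional eqn} yields
\[ \mathscr{R}_{\tilde\mu_{|\ll u^\ast|}}(w) = z - \frac{1}{w} = \frac{\sqrt{1+4\ll^2 w^2}-1}{2w}. \]

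Finally, additivity of the $\mathscr{R}$-transform under free additive convolution applied to $\mu\boxplus\tilde\mu_{|\ll u^\ast|}$ gives the stated formula for $\mathscr{R}_{\mu_\ll}$. There is no real obstacle: the only subtle step is the branch choice for the square root when inverting the Cauchy transform, which is determined by the requirement that $\mathscr{R}$ be analytic on a neighborhood of $0$ in $\C_+$ and map $\C_+$ to itself; everything else is assembly of results already stated in the excerpt.
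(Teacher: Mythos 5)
Your proposal is correct and follows essentially the same route as the paper: replace $a-\ll$ by the equidistributed $a-\ll u^\ast$ with $u$ a Haar unitary $\ast$-free from $a$, apply the symmetrization identity \eqref{eq symmetric sum}, compute $\mathscr{R}_{\tilde\mu_{|\ll u^\ast|}}$ from the Cauchy transform of $\tfrac12(\delta_\ll+\delta_{-\ll})$ with the branch mapping $\C_+$ to itself, and conclude by additivity of the $\mathscr{R}$-transform. (The appeal to traciality is unnecessary --- the identity $|a u-\ll|^2=|a-\ll u^\ast|^2$ is pure algebra using $uu^\ast=u^\ast u=1$ --- but this is harmless.)
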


\begin{remark} \label{rk square} If a measure $\mu$ is supported in $[0,\infty)$, then $\mu$ and $\tilde{\mu}$ contain the same information.  Hence, Proposition \ref{prop R transform sum} actually allows the determination of the spectral measure of $|\ll-a|$. Despite this, a direct derivation of $G_\mu$ from $G_{\tilde{\mu}}$ is not obvious in general (the latter is the odd part of the former).  However, both have the same {\em square}.  That is, if $\square$ is the map $\square(x) = x^2$ for $x\in\R$, then the push-forwards $\square_\ast\mu = \square_\ast\tilde{\mu}$ are equal and so too are the Cauchy transforms in the case that $\mu$ is supported in $[0,\infty)$.
\end{remark}

\subsection{The $\mathscr{R}$-transform of $|\ll-c|^2$ -- analytic approach} \label{sect analytic approach to R}  To demonstrate the power of Equation \ref{eq R transform sum}, we will now use it to give an alternate, entirely analytic proof of Theorem \ref{thm circular R}.

\begin{proof}[Analytic proof of Theorem \ref{thm circular R}]
Let $\nu_\ll$ denote the spectral measure of $|\ll-c|^2 = (\ll-c)(\ll-c^\ast)$, and $\nu = \nu_0$ the spectral measure of $|c|^2 = cc^\ast$.  Following the notation in Proposition \ref{prop R transform sum} and Remark \ref{rk square}, with $\mu_\ll = \tilde{\mu}_{|\ll-c|}$ and $\mu = \tilde{\mu}_{|c|}$, we have $\nu_\ll = \square_\ast\mu_\ll$ and $\nu = \square_\ast\mu$ where $\square(x) = x^2$ for $x\in\R$.  Note that $\mu_{|c|}$ is the quarter-circular law $\mu_{|c|}(dt) = \frac{2}{\pi}\sqrt{4-t^2}\1_{[0,2]}(t)\,dt$, and so $\mu$ is the standard semicircle law $\mu(dt) = \frac{1}{\pi}\sqrt{4-t^2}\1_{[-2,2]}(t)\,dt$.  Thence $\mathscr{R}_\mu(z) = z$, and Equation \ref{eq R transform sum} reads
\begin{equation} \label{eq analytic proof 1} \mathscr{R}_{\mu_\ll}(z) = z + \frac{\sqrt{1+4\ll^2 z^2}-1}{2z}, \end{equation}
for small $z\in\C_+$.  In this case, the precise domain is easy to determine since $\mathscr{R}_\mu$ is analytic everywhere; the domain of analyticity above is $|z|<\frac{1}{2\ll}$.  Adding $1/z$, Equation \ref{eq G-R functional eqn} shows that the functional inverse $G_{\mu_\ll}^{\langle -1 \rangle}$ is given by
\[ G_{\mu_\ll}^{\langle -1 \rangle}(z) = z + \frac{\sqrt{1+4\ll^2 z^2}+1}{2z}, \quad 0<|z|<\frac{1}{2\ll}. \]
Put $s = \frac{\sqrt{1+4\ll^2 z^2}+1}{2z}$; then small $|z|$ corresponds to large $|s|$.  The quantity $s$ is best characterized as a solution to the quadratic equation $zs^2 - s - z\ll^2 = 0$, and so in terms of $s$ we have
\[ z = \frac{s}{s^2-\ll^2}, \]
which evidently makes sense for large $|s|$.  Thus, Equation \ref{eq analytic proof 1} may be written in the form
\begin{equation} \label{eq analytic proof 2} \frac{s}{s^2-\ll^2} = G_{\mu_\ll}\left(\frac{s}{s^2-\ll^2} + s\right), \quad \text{for }|s|\text{ large}.
\end{equation}
Now, let us consider $G_{\nu_\ll} = G_{\square_\ast \mu_\ll}$.  We have
\begin{equation} \begin{aligned} \label{eq analytic proof 3} G_{\nu_\ll}(w^2) = \int_{\R} \frac{1}{w^2-t}\, \square_\ast \mu_\ll(dt) &= \int_{\R} \frac{1}{w^2-t^2}\,\mu_\ll(dt) \\
&= \frac{1}{2w} \int_{\R} \left(\frac{1}{w-t}+\frac{1}{w+t}\right)\,\mu_\ll(dt). \end{aligned} \end{equation}
A change of variables and the fact that $\mu_\ll$ is symmetric shows that both integrands above have the same integral, and so Equation \ref{eq analytic proof 3} becomes
\begin{equation} \label{eq analytic proof 4}
G_{\nu_\ll}(w^2) = \frac{1}{w}\int_{\R} \frac{1}{w-t}\,\mu_\ll(dt) = \frac{1}{w}G_{\mu_\ll}(w). \end{equation}
Substitute $w=\frac{s}{s^2-\ll^2}+s$ into Equation \ref{eq analytic proof 2}, and Equation \ref{eq analytic proof 4} yields that for large $|s|$,
\begin{equation} \label{eq analytic proof 5} G_{\nu_\ll}\left(\left(\frac{s}{s^2-\ll^2}+s\right)^2\right) = \left(\frac{s}{s^2-\ll^2}+s\right)^{-1}\cdot\frac{s}{s^2-\ll^2}
= \frac{1}{1+s^2-\ll^2}. \end{equation}
Inverting $G_{\nu_\ll}$ in Equation \ref{eq analytic proof 5} and again using Equation \ref{eq G-R functional eqn}, we get for large $|s|$,
\begin{equation} \label{eq analytic proof 6} \begin{aligned} \mathscr{R}_{\nu_\ll}\left(\frac{1}{1+s^2-\ll^2}\right) &= G_{\nu_\ll}^{\langle -1 \rangle}\left(\frac{1}{1+s^2-\ll^2}\right) - (1+s^2-\ll^2) \\
&= \left(\frac{s}{s^2-a^2}+s\right)^2-(1+s^2-\ll^2) \\
&= \frac{s^2(1+s^2-\ll^2)^2}{(s^2-\ll^2)^2} - (1+s^2-\ll^2).
\end{aligned} \end{equation}
Finally, set $z = (1+s^2-\ll^2)^{-1}$, so that large $|s|$ corresponds to small $|z|$.  Then
\[ 1-z = \frac{s^2-\ll^2}{s^2-\ll^2+1}, \quad s^2 = \frac{1}{w}+\ll^2-1. \]
Substituting into Equation \ref{eq analytic proof 6}, we find that for small $z\ne 0$,
\begin{equation} \label{eq analytic proof 7} \begin{aligned}
\mathscr{R}_{\nu_\ll}(z) &= (z^{-1}+\ll^2-1)\cdot\frac{1}{(1-z)^2} - \frac{1}{z} \\
&= \frac{z^{-1}-1}{(1-z)^2} + \frac{\ll^2}{(1-z)^2} - \frac{1}{z} \\
&= \frac{1}{z(1-z)} - \frac{1}{z} + \frac{\ll^2}{(1-z)^2} = \frac{1}{1-z} + \frac{\ll^2}{(1-z)^2},
\end{aligned}\end{equation}
which is the desired result.
\end{proof}

\begin{remark} The above proof is rather shorter than the one in Section \ref{sect combinatorial R}.  It relies on the somewhat sophisticated analytic result of Proposition \ref{prop R transform sum}; on the other hand, the proof in Section \ref{sect combinatorial R} relies on the sophisticated combinatorial result of Theorem \ref{thm prod cumulant}.  The benefit of the combinatorial proof is that it provides a direct explanation for all of the terms in the the $\mathscr{R}$-transform of $\nu_\ll$, which is the reason we've included it. \end{remark}

\subsection{Analytic Continuation and Roots of $G_{\mu_\ll}$} \label{sect analytic continuation}

Our goal is to use Equation \ref{eq symmetric sum} to determine (to leading order) the smallest positive singular value of $G_{\mu_\ll}$, which is the reciprocal of the spectral radius of the resolvent $R_a(\ll) = (a-\ll)^{-1}$ of our $\mathscr{R}$-diagonal operator $a$ in the $\mathrm{II}_1$--factor $\mathscr{A}$.  Let $\phi$ denote the trace on $\mathscr{A}$.  Adding $1/z$ to both sides, rewrite Equation \ref{eq symmetric sum} in the form
\begin{equation} \label{eq Gmu} G_{\mu_\ll}\left(\mathscr{R}_\mu(z) + \frac{1+\sqrt{1+4\ll^2 z^2}}{2z}\right) = z, \quad \text{for small }z\ne 0. \end{equation}
Following Section 4 of \cite{Haagerup Schultz}, we introduce the auxiliary functions
\begin{equation}\begin{aligned}\label{eq h}
h(s) &= s\,\phi\left((aa^\ast + s^2)^{-1}\right), \qquad\qquad\qquad  s>0 \\
h_\ll(s) &= s\,\phi\left(((a-\ll)(a-\ll)^\ast + s^2)^{-1}\right), \;\; \ll,s>0.
\end{aligned}\end{equation}
Then, as proved in Lemma 4.2 in \cite{Haagerup Schultz},
\begin{align}
\label{eq G h} G_\mu(is) &= -ih(s), \quad \;\;\; s>0 \\
\label{eq Gl hl} G_{\mu_\ll}(is) &= -i h_\ll(s), \quad \ll,s>0.
\end{align}
Using Equation \ref{eq G h} together with the definition of $\mathscr{R}_\mu$ in terms of $G_\mu$, it follows that there is some large $s_0>0$ so that
\begin{equation} \label{eq R h}
\mathscr{R}_\mu(-ih(s)) - \frac{1}{i h(s)} = is, \quad s>s_0.
\end{equation}
Combining Equations \ref{eq Gmu} (with $z=-ih(s)$), \ref{eq Gl hl}, and \ref{eq R h}, we have
\begin{equation} \label{eq hl 1}
h_\ll\left(s-\frac{1}{2h(s)} + \frac{\sqrt{1-4\ll^2h(s)^2}}{2h(s)}\right) = h(s),
\end{equation}
for large $s$ --- say $s>s_\ll$ for some $s_\ll>s_0$.

\medskip

Based on Equation \ref{eq hl 1}, it was proved in Proposition 4.13 of \cite{Haagerup Schultz} (see also Lemma 4.8 and Definition 4.9) that $h_\ll(t)$ can be obtained from $h$ for all $t>0$ in the following way.

\begin{proposition} \label{HS prop 4.13}
For every $t>0$, the equation
\begin{equation} \label{eq HS prop 4.13 1} (s-t)\left(\frac{1}{h(s) - s + t}\right) = \ll^2 \end{equation}
has a unique solution $s = s(\ll,t)$ in the interval $(t,\infty)$, and 
\begin{equation} \label{eq HS prop 4.13 2} h_\ll(t) = h(s(\ll,t)), \quad t>0. \end{equation}
\end{proposition}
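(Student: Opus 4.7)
The plan is to recognize Equation \ref{eq hl 1} as a disguised version of the polynomial identity in Equation \ref{eq HS prop 4.13 1}, to establish existence and uniqueness of the root $s(\ll,t)\in(t,\infty)$ for every $t>0$, and to extend the identification $h_\ll(t)=h(s(\ll,t))$ from large $t$ (where it is immediate from \ref{eq hl 1}) to all $t>0$ by analytic continuation.

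\textbf{Algebraic reformulation.} Let $\tau_\ll(s)$ denote the argument of $h_\ll$ on the left-hand side of Equation \ref{eq hl 1}, so that
\[
A(s):=s-\tau_\ll(s)=\frac{1-\sqrt{1-4\ll^2 h(s)^2}}{2h(s)}
\]
is the smaller of the two roots of the quadratic $y\mapsto y\bigl(1/h(s)-y\bigr)-\ll^2$. In particular $A(s)\bigl(1/h(s)-A(s)\bigr)=\ll^2$, so substituting $t=\tau_\ll(s)$ gives $(s-t)\bigl(1/h(s)-s+t\bigr)=\ll^2$, which is exactly Equation \ref{eq HS prop 4.13 1}. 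Thus in the regime $s>s_\ll$ where \ref{eq hl 1} applies, the proposition holds with $s=s(\ll,t)$ determined by this algebraic equation and $h_\ll(t)=h(s)$ coming directly from \ref{eq hl 1}.

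\textbf{Existence and uniqueness for all $t>0$.} Fix $t>0$ and set $F_t(s):=(s-t)\bigl(1/h(s)-s+t\bigr)$ on $(t,\infty)$. The operator bound $(aa^\ast+s^2)^{-1}\le s^{-2}$ gives $h(s)\le 1/s$, so $1/h(s)-s+t\ge t>0$; combined with $s-t>0$ this yields $F_t(s)>0$ on $(t,\infty)$. Using the expansion $h(s)=1/s+O(1/s^3)$ one checks that $F_t(s)\to\infty$ as $s\to\infty$, while $F_t(t^+)=0$; existence of a root of $F_t(s)=\ll^2$ follows from the intermediate value theorem. The main obstacle is uniqueness: this reduces to strict monotonicity of $g(s):=1/h(s)-s$ on $(0,\infty)$, which is a consequence of the Pick--Nevanlinna structure of $h$ as a resolvent trace of the positive operator $aa^\ast$ (cf.\ Lemma~4.8 of \cite{Haagerup Schultz}). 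Granted this, $F_t(s)=(s-t)\bigl(g(s)+t\bigr)$ is a product of a strictly positive increasing factor and a strictly positive factor whose derivative is controlled, and a direct computation of $F_t'$ (using $g'<0$ but $|g'|$ small enough to be absorbed by $g+t$) shows $F_t$ is strictly increasing, hence attains each positive value at most once.

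\textbf{Extension by analytic continuation.} By the first step, $h_\ll(t)=h(s(\ll,t))$ holds whenever $t$ lies in the image $\tau_\ll\bigl((s_\ll,\infty)\bigr)$. Since $\tau_\ll(s)=s-O(h(s))=s-O(1/s)\to\infty$, this image contains an interval $(T_\ll,\infty)$. Both functions $t\mapsto h_\ll(t)$ and $t\mapsto h(s(\ll,t))$ are real-analytic on $(0,\infty)$: the former directly from the resolvent definition \ref{eq h}, and the latter via the analytic implicit function theorem applied to $F_t(s)-\ll^2=0$, where the uniqueness established above guarantees $\partial_s F_t(s(\ll,t))\neq 0$. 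Agreement on the open interval $(T_\ll,\infty)$ then forces agreement on all of $(0,\infty)$ by the identity theorem, completing the proof.
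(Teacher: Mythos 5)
Your overall architecture (recast Equation \ref{eq hl 1} as the algebraic relation $(s-t)\bigl(\tfrac{1}{h(s)}-s+t\bigr)=\ll^2$, prove existence and uniqueness of the root in $(t,\infty)$, then extend $h_\ll(t)=h(s(\ll,t))$ from large $t$ to all $t>0$ by real-analyticity) is sound, and your reading of Equation \ref{eq HS prop 4.13 1} is the intended one, as the proof of Corollary \ref{cor h -} confirms; note also that the paper itself gives no proof here but simply quotes Proposition 4.13 of \cite{Haagerup Schultz}. However, your argument has a genuine gap at its central step, uniqueness. You claim it reduces to strict monotonicity of $g(s)=\tfrac{1}{h(s)}-s$ on $(0,\infty)$, attributed to the ``Pick--Nevanlinna structure'' of $h$; this claim is false. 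Take $\mu_{aa^\ast}=\tfrac12(\delta_0+\delta_2)$, which is admissible for an $\mathscr{R}$-diagonal $a$ with $\|a\|_2=1$: then $h(s)=\tfrac{s^2+1}{s(s^2+2)}$ and $g(s)=\tfrac{s}{s^2+1}$, which increases on $(0,1)$ and decreases on $(1,\infty)$. Your fallback sentence (``a direct computation of $F_t'$, using $g'<0$ but $|g'|$ small enough to be absorbed by $g+t$'') is precisely the assertion that needs proof and is nowhere carried out, so the heart of the proposition is left unestablished. A second, smaller flaw sits in the continuation step: ``uniqueness guarantees $\partial_s F_t(s(\ll,t))\neq 0$'' is a non sequitur (a strictly monotone function can have a critical point), so as written the implicit-function-theorem step is also unsupported.

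Both defects are repaired by one quantitative inequality, which is what you should prove: $g(s)+s\,g'(s)\ge 0$, i.e.\ $s\mapsto s\,g(s)$ is non-decreasing. Writing $\psi(w)=\phi\bigl((aa^\ast+w)^{-1}\bigr)$, so that $h(s)=s\,\psi(s^2)$ and $s\,g(s)=\tfrac{1}{\psi(s^2)}-s^2$, the claim is equivalent to $-\psi'(w)\ge \psi(w)^2$, which is immediate from Cauchy--Schwarz applied to the probability measure $\nu=\mu_{aa^\ast}$: $\bigl(\int\tfrac{\nu(dx)}{x+w}\bigr)^2\le \int\tfrac{\nu(dx)}{(x+w)^2}$. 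Granting this, for $s>t>0$ one has
\[
F_t'(s)=g(s)+t+(s-t)g'(s)=\frac{s-t}{s}\bigl(g(s)+s\,g'(s)\bigr)+t\Bigl(1+\frac{g(s)}{s}\Bigr)\;\ge\; t\;>\;0,
\]
using $g\ge 0$ (your bound $h(s)\le 1/s$). This gives strict monotonicity of $F_t$, hence uniqueness, and simultaneously the nonvanishing of $\partial_s F_t$ at the root needed for real-analyticity of $t\mapsto s(\ll,t)$ and the identity-theorem argument. With this inequality inserted, the remainder of your proof (algebraic reformulation, existence via the intermediate value theorem, agreement with Equation \ref{eq hl 1} for large $t$, and continuation to all $t>0$) goes through.
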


\begin{corollary} \label{cor h -} Let $s>0$, and assume that $1-4\ll^2 h(s)^2 \ge 0$ and
\[ s-\frac{1}{2h(s)} - \frac{\sqrt{1-4\ll^2h(s)^2}}{2h(s)} > 0. \]
Then
\begin{equation} \label{eq Cor h}
h_\ll\left(s-\frac{1}{2h(s)}-\frac{\sqrt{1-4\ll^2h(s)^2}}{2h(s)}\right) = h(s).
\end{equation}
\end{corollary}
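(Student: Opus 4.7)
The strategy is to invoke Proposition \ref{HS prop 4.13} directly, using its uniqueness assertion. For the given $s>0$ satisfying the hypotheses, set
\[
t \;=\; s-\frac{1}{2h(s)}-\frac{\sqrt{1-4\ll^2 h(s)^2}}{2h(s)},
\]
and write $u=\sqrt{1-4\ll^2 h(s)^2}\in[0,1)$ (noting that $h(s)>0$ forces $u<1$). I will check two things: (i) $s\in(t,\infty)$, and (ii) $s$ solves the defining equation $(s-t)\bigl(\tfrac{1}{h(s)}-s+t\bigr)=\ll^2$ of Proposition \ref{HS prop 4.13}. Once these are in place, the uniqueness of $s(\ll,t)\in(t,\infty)$ forces $s=s(\ll,t)$, and the conclusion $h_\ll(t)=h(s(\ll,t))=h(s)$ is immediate.

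Both verifications are short calculations. For (i), write $t=s-\tfrac{1+u}{2h(s)}$, so that $s-t=\tfrac{1+u}{2h(s)}>0$; combined with the standing assumption that $t>0$, this places $s$ in $(t,\infty)$. For (ii), the same substitution yields
\[
\frac{1}{h(s)}-s+t \;=\; \frac{1}{h(s)}-\frac{1+u}{2h(s)} \;=\; \frac{1-u}{2h(s)},
\]
and therefore
\[
(s-t)\Bigl(\tfrac{1}{h(s)}-s+t\Bigr) \;=\; \frac{(1+u)(1-u)}{4h(s)^2} \;=\; \frac{1-u^2}{4h(s)^2} \;=\; \frac{4\ll^2 h(s)^2}{4h(s)^2} \;=\; \ll^2,
\]
as required.

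\textbf{Where the content is.} The only conceptual point is to recognize that the defining relation of Proposition \ref{HS prop 4.13}, viewed as a quadratic equation in $t$ for fixed $s$, has the two roots $t=s-\tfrac{1\pm u}{2h(s)}$. Equation \ref{eq hl 1} records the $+u$ (larger) root, obtained from Equation \ref{eq Gmu} via the branch of the square root analytic for large $s$; the corollary simply observes that the $-u$ (smaller) root is an equally legitimate solution of Proposition \ref{HS prop 4.13}'s equation, and that under the stated positivity hypotheses it still lies in the domain on which $h_\ll$ is defined. There is no genuine obstacle: once Proposition \ref{HS prop 4.13} is in hand, the corollary is a direct verification, and the hypotheses $1-4\ll^2 h(s)^2\ge 0$ and positivity of the defining expression for $t$ are precisely what make $u$ real and keep $s$ in the range $(t,\infty)$ where the uniqueness of Proposition \ref{HS prop 4.13} applies.
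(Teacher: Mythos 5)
Your proposal is correct and follows the paper's own argument exactly: set $t$ equal to the given expression, note $s>t$ (and $t>0$ by hypothesis), verify that $s$ satisfies the defining equation of Proposition \ref{HS prop 4.13} so that $s=s(\ll,t)$ by uniqueness, and conclude from Equation \ref{eq HS prop 4.13 2}. Your explicit computation with $u=\sqrt{1-4\ll^2h(s)^2}$ simply spells out what the paper dismisses as ``a simple matter to check,'' reading the defining relation in the form $(s-t)\bigl(\tfrac{1}{h(s)}-s+t\bigr)=\ll^2$, which is indeed the reading consistent with Equation \ref{eq hl 1}.
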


\begin{proof} Put $t = s-\frac{1}{2s(h)} -\frac{\sqrt{1-4\ll^2 h(s)^2}}{2h(s)}$.  Then $s>t$ (since $h(s)>0$ from Equation \ref{eq h}).  It is a simple matter to check that $s$ is a solution to Equation \ref{eq HS prop 4.13 1}, and therefore $s=s(\ll,t)$.  Hence, Equation \ref{eq Cor h} follows from Equation \ref{eq HS prop 4.13 2}.
\end{proof}

\begin{remark} Comparing Equations \ref{eq hl 1} and \ref{eq Cor h}, we see that that point of Corollary \ref{cor h -} is that the {\em negative} root may also be chosen in the determining equation for $h_\ll$.  This results in the similar alternate version of Equation \ref{eq Gmu} in Proposition \ref{prop neg root} below, identifying what turns out to be the correct singular value of $G_{\mu_\ll}$.
\end{remark}

In the following, we assume that $\ll>\|a\|_2$.  Since $\mathrm{spec}(a) \subseteq B(0,\|a\|_2)$ (cf. \cite{Haagerup Larsen}), $a-\ll$ is invertible and therefore $\supp(\mu_\ll) \subseteq \R\setminus(-\d_\ll,\d_\ll)$ for some $\d_\ll>0$.
Therefore $G_{\mu_\ll}$ is defined and analytic in a complex neighbourhood of $0$.

\begin{proposition} \label{prop neg root} For all $z$ in a small complex neighbourhood of $0$,
\begin{equation} \label{eq neg root}
G_{\mu_\ll}\left(\mathscr{R}_\mu(z) + \frac{1-\sqrt{1+4\ll^2 z^2}}{2z}\right) = z.
\end{equation}
\end{proposition}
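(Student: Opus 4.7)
The plan is to mirror the derivation of Equation \ref{eq Gmu} but with Corollary \ref{cor h -} in place of Equation \ref{eq hl 1}: I will verify \ref{eq neg root} first on a sequence of points on the imaginary axis accumulating at $0$, and then extend by analytic continuation.

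First I would check that both sides of \ref{eq neg root} extend analytically to a complex neighbourhood of $0$. Expanding the principal branch,
\[
\frac{1-\sqrt{1+4\ll^2 z^2}}{2z} = -\ll^2 z + O(z^3),
\]
so this factor has a removable singularity at $0$. Since $\mu = \tilde{\mu}_{|a|}$ is symmetric, $\mathscr{R}_\mu(0) = \kk_1(\mu) = 0$, and thus the argument of $G_{\mu_\ll}$ vanishes at $z = 0$. The hypothesis $\ll > \|a\|_2$, together with the remark preceding the proposition, keeps $\supp(\mu_\ll)$ away from $0$, so $G_{\mu_\ll}$ is analytic in a neighbourhood of $0$ and consequently so is the full composition.

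Next I would evaluate the LHS at the test points $z = -ih(s)$ for large $s > 0$. Equation \ref{eq R h} yields $\mathscr{R}_\mu(-ih(s)) = is - i/h(s)$. Since for $h(s)$ small the principal branch at $z = -ih(s)$ equals the positive real number $\sqrt{1-4\ll^2 h(s)^2}$, a short algebraic simplification gives
\[
\left.\mathscr{R}_\mu(z) + \frac{1-\sqrt{1+4\ll^2 z^2}}{2z}\right|_{z=-ih(s)} = i\left(s - \frac{1 + \sqrt{1-4\ll^2 h(s)^2}}{2h(s)}\right).
\]
Writing $t(s)$ for the expression in parentheses, the geometric expansion $h(s) = 1/s - \|a\|_2^2/s^3 + O(s^{-5})$ (coming from expanding $(aa^\ast+s^2)^{-1}$ as $s\to\infty$) gives
\[
t(s) = \frac{\ll^2-\|a\|_2^2}{s} + O(s^{-3}) > 0
\]
for $s$ large, and $1-4\ll^2 h(s)^2 > 0$ as well. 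Corollary \ref{cor h -} therefore applies, so $h_\ll(t(s)) = h(s)$, and then Equation \ref{eq Gl hl} produces $G_{\mu_\ll}(it(s)) = -ih_\ll(t(s)) = -ih(s) = z$, which is precisely \ref{eq neg root} at $z = -ih(s)$.

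Thus \ref{eq neg root} holds along the arc $\{-ih(s) : s \text{ sufficiently large}\}$, a subset of the imaginary axis accumulating at $0$. The identity theorem applied to the analytic difference of the two sides in a neighbourhood of $0$ then extends the identity throughout the neighbourhood. The main technical point, which I expect to be only a bookkeeping issue, is to confirm that the principal branch of $\sqrt{1+4\ll^2 z^2}$ appearing in \ref{eq neg root} agrees with the positive $\sqrt{1-4\ll^2 h(s)^2}$ emerging from Corollary \ref{cor h -}; this follows at once from the observation that for $|z|$ small the radicand stays close to $1$, uniquely pinning down the branch.
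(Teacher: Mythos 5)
Your proposal is correct and follows essentially the same route as the paper's own proof: evaluate the left-hand side at $z=-ih(s)$, use the expansion $h(s)=\frac{1}{s}+\frac{\|a\|_2^2}{s^3}+O(s^{-5})$ to verify that $t(s)>0$ and $1-4\ll^2h(s)^2\ge 0$ so that Corollary \ref{cor h -} applies, and then extend by analyticity of both sides near $0$ from the imaginary segment accumulating at the origin. Your extra remark about pinning down the branch of the square root is a harmless elaboration of what the paper leaves implicit.
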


\begin{proof} Since $\mu$ is symmetric, its odd moments are all $0$.  Also, since $\square_\ast \mu = \mu_{|a|^2}$,  the even moments are given by
\[ m_{2k}(\mu) = \|a\|_{2k}^{2k}, \quad k=1,2,\ldots \]
Therefore, 
\[ G_\mu(z) = \frac{1}{z} + \frac{\|a\|_2^2}{z^3} + O\left(\frac{1}{z^5}\right), \quad |z|\to\infty. \]
Hence,
\[ h(s) = -iG_\mu(is) = \frac{1}{s} + \frac{\|a\|_2^2}{s^3}+O\left(\frac{1}{s^5}\right), \quad s\to\infty. \]
It follows that
\[ \frac{1}{h(s)} = s\left(1-\frac{\|a\|_2^2}{s^2} + O\left(\frac{1}{s^4}\right)\right), \quad s\to\infty \]
and
\[ \sqrt{1-4\ll^2h(s)^2} = 1-\frac{2\ll^2}{s^2}+O\left(\frac{1}{s^4}\right). \]
Taking $t=t(s)$ as in the proof of Corollary \ref{cor h -}, we have
\[ \begin{aligned} t(s) &= s - \frac{1}{2h(s)} - \frac{\sqrt{1-4\ll^2h(s)^2}}{2h(s)} \\ &= (\ll^2-\|a\|_2^2)\frac{1}{s}+O\left(\frac{1}{s^3}\right), \end{aligned} \] 
and therefore $t(s)>0$ for $s>s_\ll'$ for some $s_\ll'>0$.  Thus, by Corollary \ref{cor h -},
\[ h_\ll(t(s)) = h(s) \quad \text{for }s>s_\ll' \]
which is precisely the statement of Equation \ref{eq neg root} in the case $z = -ih(s)$.  Since
\[ \lim_{z\to 0}\left[ \mathscr{R}_\mu(z) + \frac{1-\sqrt{1+4\ll^2 z^2}}{2z}\right] = 0, \]
the left-hand-side of Equation \ref{eq neg root} is well-defined and analytic in a complex neighbourhood of $0$.  Hence, as we have shown that the equation holds for all $z$ in an imaginary line segment accumulating at $0$, Equation \ref{eq neg root} holds everywhere in a complex neighbourhood of $0$.
\end{proof}

\subsection{Negative Moments} \label{sect neg moments} From now on, let us normalize $a$ so that $\|a\|_2 = 1$; we therefore only consider $\ll>1$.  We also assume that $a$ is not Haar unitary, which (under this normalization) is equivalent to the requirement that $\|a\|_4 > 1$.  Put
\[ M_\ll = \|a-\ll\|, \quad m_\ll = \|(a-\ll)^{-1}\|^{-1}. \]
Then
\[ \supp(\mu_\ll) \subseteq [-M_\ll,-m_\ll]\cup[m_\ll,M_\ll]. \]
Moreoever, $\pm m_\ll$ and $\pm M_\ll$ are singular points for the Cauchy transform
\[ G_{\mu_\ll}(z) = \int_{\R} \frac{1}{z-x} \mu_\ll(dx), \quad z\in \C-\left([-M_\ll,-m_\ll]\cup[m_\ll,M_\ll]\right). \]
Our goal is to determine the asymptotic behaviour of $m_\ll$ as $\ll\downarrow 1$; we will accomplish this through Equation \ref{eq neg root}.

\medskip

The free cumulants $\kk_n(\mu)$ vanish for odd $n$ since $\mu$ is symmetric, and so $\mathscr{R}_\mu$ is given by the power series
\[ \mathscr{R}_\mu(z) = \kk_2(\mu)z + \kk_4(\mu)z^3 + \kk_6(\mu)z^5 + \cdots \]
for $z$ in a complex neighbourhood of $0$.  Moreover, since $\mu$ is centred, 
\[ \kk_2(\mu) = m_2(\mu) =\|a\|_2^2 = 1 \]
and
\[ \kk_4(\mu) = m_4(\mu) -2 = \|a\|_4^4-2. \]
Define
\begin{equation} \label{eq v(a)} v(a) = \|a\|_4^4 - (\|a\|_2^2)^2 = \kk_4(\mu)+1. \end{equation}
Then $v(a)$ is strictly positive.  We have
\[ \mathscr{R}_\mu(z) = z + (v(a)-1)z^3 + \kk_6(\mu)z^5+ O(z^7). \]
Now, we may expand $\sqrt{1+4\ll^2 z^2}$ as a Taylor series about $0$.  The result is
\[\begin{aligned} \frac{1-\sqrt{1+4\ll^2 z^2}}{2z} &= \frac{1}{2z}\left(1-\sum_{\ell=0}^\infty \binom{1/2}{\ell}(4\ll^2 z^2)^\ell\right) \\
&= -\sum_{\ell=1}^\infty \binom{1/2}{\ell} \ll^{2\ell}(2z)^{2\ell-1} \\
&= \sum_{\ell=1}^\infty (-1)^\ell C_{\ell-1} \ll^{2\ell}(2z)^{2\ell-1} \\
&= -\ll^2 z + \ll^4 z^3 - 2\ll^6 z^5 + \cdots \end{aligned} \]
Here $C_k$ is the {\em Catalan number} $C_k = \frac{1}{k+1}\binom{2k}{k}$.  Now, following Equation \ref{eq neg root}, define
\begin{equation} \label{eq B 1} B_\ll(z) = \mathscr{R}_\mu(z) + \frac{1-\sqrt{1+4\ll^2 z^2}}{2z}. \end{equation}
From the preceding discussion, $B_\ll$ has a the power series expansion
\begin{equation} \label{eq B 2} B_\ll(z) = \left(1-\ll^2\right)z + \left(v(a)-1+\ll^4\right)z^3 + \left(\kk_6(\mu)-2\ll^6\right)z^5 + O(z^7), \end{equation}
in a complex neighbourhood of $0$.

\medskip

By Proposition \ref{prop neg root}, $B_\ll(z)$ and $G_{\mu_\ll}(w)$ are inverse functions of each other when both $|z|$ and $|w|$ are small.  Since $\supp(\mu_\ll)\subseteq\R\setminus(-\d_\ll,\d_\ll)$ with $\d_\ll>0$, we have for $|w|<\d_\ll$,
\begin{equation} \label{eq neg moments 1} \begin{aligned}
G_{\mu_\ll}(w) &= \int_{\R\setminus (-\d_\ll,\d_\ll)} \frac{1}{w-x} \mu_\ll(dx) \\
&= -\int_{\R\setminus (-\d_\ll,\d_\ll)} \frac{1}{x}\left(1+\frac{w}{x}+\frac{w^2}{x^2}+\cdots\right)\,\mu_\ll(dx) \\
&= -\left( m_{-2}(\mu_\ll)\,w + m_{-3}(\mu_\ll)\,w^2 + m_{-4}(\mu_\ll)\,w^3 + \cdots\right),
\end{aligned} \end{equation}
where
\begin{equation} \label{eq neg moments 2}
m_{-k}(\mu_\ll) = \int_{\R} x^{-k} \mu_\ll(dx), \quad k=1,2,\ldots
\end{equation}
are the negative moments of $\mu_\ll$.  Again, since $\mu_\ll$ is symmetric, $m_{-k}(\mu_\ll) = 0$ for $k$ odd.  We will now use the Lagrange inversion formula, together with Equations \ref{eq B 2} and \ref{eq neg moments 1}, to express the even negative moment $m_{-2\ell}(\mu_\ll)$ in terms of the free cumulants $\kk_2(\mu_\ll), \kk_4(\mu_\ll), \ldots, \kk_{2\ell}(\mu_\ll)$ for $\ell\in\N$.  In particular, as can be easily calculated directly,
\begin{equation} \label{eq m -2,-4} m_{-2}(\mu_\ll) = \frac{1}{\ll^2-1}, \quad m_{-4}(\mu_\ll) = \frac{\ll^4-1+v(a)}{(\ll^2-1)^4}. \end{equation}

\begin{lemma} \label{lem Lagrange} Let $v>0$.  The inverse of the function
\[ F(z) = z-vz^3 \]
has the power series expansion
\[ F^{\langle -1 \rangle}(w) = \sum_{k=0}^\infty C^{(2)}_k v^k\,w^{2k+1}, \]
where $C^{(2)}_k = \frac{1}{2k+1}\binom{3k}{k}$.
\end{lemma}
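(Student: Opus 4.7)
The plan is to apply the Lagrange inversion formula directly. Since $F(z) = z - vz^3 = z(1-vz^2)$ has $F(0) = 0$ and $F'(0) = 1 \neq 0$, the compositional inverse $G = F^{\langle -1 \rangle}$ exists and is analytic near $0$. Writing $F(z) = z/\phi(z)$ with $\phi(z) = (1-vz^2)^{-1}$, the Lagrange inversion formula gives
\begin{equation*}
[w^n] G(w) = \frac{1}{n}\,[z^{n-1}] \phi(z)^n = \frac{1}{n}\,[z^{n-1}](1-vz^2)^{-n}.
\end{equation*}

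The right-hand side is immediate from the binomial series:
\begin{equation*}
(1-vz^2)^{-n} = \sum_{k\ge 0} \binom{n+k-1}{k} v^k z^{2k}.
\end{equation*}
This has only even powers of $z$, so the coefficient $[z^{n-1}](1-vz^2)^{-n}$ vanishes unless $n-1$ is even, i.e.\ unless $n = 2k+1$ for some $k\ge 0$. Substituting $n = 2k+1$ gives
\begin{equation*}
[z^{2k}](1-vz^2)^{-(2k+1)} = \binom{2k+1+k-1}{k} v^k = \binom{3k}{k} v^k,
\end{equation*}
so that $[w^{2k+1}] G(w) = \frac{1}{2k+1}\binom{3k}{k} v^k = C^{(2)}_k v^k$, which is the claimed expansion.

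There is essentially no obstacle here: once Lagrange inversion is invoked, the calculation reduces to a single binomial coefficient identification. The only small point to verify for cleanliness is the radius of convergence — since the coefficients $C^{(2)}_k$ grow like $27^k$ (the Fuss--Catalan asymptotics), the series converges for $|w| < (27v)^{-1/2}$, consistent with the fact that $F$ has critical points at $z = \pm (3v)^{-1/2}$ where $F'$ vanishes, giving critical values of magnitude $\frac{2}{3\sqrt{3v}} = (27v)^{-1/2} \cdot 2 / \ldots$; the precise match of radii follows from the standard branch considerations and is not needed for the formal power series identity stated in the lemma.
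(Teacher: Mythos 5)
Your proof is correct and takes essentially the same route as the paper's: both invoke the Lagrange inversion formula (your coefficient-extraction form $[w^n]G(w)=\tfrac{1}{n}[z^{n-1}](1-vz^2)^{-n}$ is exactly the paper's residue computation $b_{2k+1}=\tfrac{1}{2k+1}\mathrm{Res}\left(F(z)^{-(2k+1)},0\right)$) and reduce to the same binomial coefficient $\binom{3k}{k}v^k$. The only slip is in your optional closing aside: the coefficients grow like $(27v/4)^k$ rather than $(27v)^k$, so the radius of convergence is $\sqrt{4/(27v)}$ (matching the critical value of $F$), but as you note this is not needed for the lemma itself.
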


\begin{remark} \label{rk Fuss-Catalan} The numbers $C^{(2)}_k$ in Lemma \ref{lem Lagrange} are the $p=2$ case of the {\em Fuss-Catalan numbers}
\[ C^{(p)}_k = \frac{1}{pk+1}\binom{(p+1)k}{k}. \]
Note that when $p=1$ we recover the standard Catalan numbers.  The appearance of these combinatorially interesting numbers in free probability theory is discussed at length in \cite{Kemp Speicher} and \cite{Larsen}, in addition to more recent papers and preprints \cite{REU}, \cite{Kemp 2}, and \cite{KMRS}.
\end{remark}

\begin{proof} Since $F$ is odd, so is its inverse.  Write the (yet-to-be-determined) coefficients of $F^{\langle -1 \rangle}$ as $F^{\langle -1 \rangle}(w) = w + b_3 w^3 + b_5 w^5 + \cdots$.  Since $F$ is analytic at $0$ and $F'(0)\ne 0$, the Lagrange inversion formula states that
\[ b_{2k+1} = \frac{1}{2k+1}\mathrm{Res}\left(F(z)^{-(2k+1)},0\right). \]
Writing $F(z)^{-(2k+1)} = (1-vz^2)^{-(2k+1)}\,\cdot z^{-2k}\frac{1}{z}$, we see that the residue in question is the coefficient of $z^{2k}$ in the power series expansion of $(1-vz^2)^{-(2k+1)}$, which is equal to
\[ v^k \frac{(2k+1)(2k+2)\cdots (3k)}{k!} = v^k\binom{3k}{k}. \]
This proves the lemma.
\end{proof}

\label{theorem neg moments page}

\begin{theorem} \label{thm neg moments} Let $k$ be a non-negative integer.  Then as $\ll\downarrow 1$,
\begin{equation} \label{eq neg moments}
m_{-2k-2}(\mu_\ll) \sim C^{(2)}_k\frac{v(a)^k}{(\ll^2-1)^{3k+1}}.
\end{equation}
\end{theorem}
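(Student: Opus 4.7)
The strategy is to invert the power series $B_\ll(z)$ from Equation \ref{eq B 2} and extract the negative moments from the resulting Taylor coefficients of $G_{\mu_\ll} = B_\ll^{\langle -1\rangle}$ via Proposition \ref{prop neg root}. Since $\mu_\ll$ is symmetric, Equation \ref{eq neg moments 1} reduces to
\[ -G_{\mu_\ll}(w) = \sum_{k\ge 0} m_{-2k-2}(\mu_\ll)\,w^{2k+1}, \]
so determining $m_{-2k-2}(\mu_\ll)$ amounts to reading off a single coefficient of the compositional inverse of $B_\ll$.

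The obstruction to direct inversion is that the linear coefficient $1-\ll^2$ of $B_\ll$ vanishes as $\ll\downarrow 1$, making the naive inverse series singular in the limit. To balance this degeneration I rescale $z = \epsilon_\ll u$ with $\epsilon_\ll = \sqrt{(\ll^2-1)/v(a)}$, chosen exactly so that the linear and cubic terms of $B_\ll$ become the same order. A direct computation using Equation \ref{eq B 2} then gives
\[ B_\ll(\epsilon_\ll u) = -\epsilon_\ll(\ll^2-1)\,\tilde B_\ll(u), \qquad \tilde B_\ll(u) = u - \tfrac{v(a)-1+\ll^4}{v(a)}\,u^3 + O((\ll^2-1)u^5) + \cdots, \]
and every Taylor coefficient of $\tilde B_\ll(u)$ depends analytically on $\ll$ near $\ll=1$ and converges, as $\ll\downarrow 1$, to the corresponding coefficient of the polynomial $F(u) = u-u^3$ appearing in Lemma \ref{lem Lagrange} with $v=1$. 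Writing $\tilde B_\ll^{\langle -1\rangle}(\eta) = \sum_{k\ge 0} b_{2k+1}(\ll)\,\eta^{2k+1}$ and inverting $w = B_\ll(z)$, the identity $\epsilon_\ll^{-2k} = v(a)^k(\ll^2-1)^{-k}$ yields the exact formula
\[ m_{-2k-2}(\mu_\ll) = b_{2k+1}(\ll)\,\frac{v(a)^k}{(\ll^2-1)^{3k+1}}. \]

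The theorem thus reduces to the claim that $b_{2k+1}(\ll) \to C^{(2)}_k$ as $\ll\downarrow 1$, which is the main technical point. By the Lagrange inversion formula, $(2k+1)\,b_{2k+1}(\ll)$ equals the coefficient of $u^{2k}$ in $\bigl(\tilde B_\ll(u)/u\bigr)^{-(2k+1)}$, a universal polynomial in the first $k$ nontrivial even coefficients of $\tilde B_\ll(u)/u$. Since each of those finitely many coefficients converges as $\ll\downarrow 1$ to the corresponding coefficient of $1-u^2$, the residue converges to $[u^{2k}](1-u^2)^{-(2k+1)} = \binom{3k}{k}$, giving $b_{2k+1}(\ll) \to \binom{3k}{k}/(2k+1) = C^{(2)}_k$, exactly as in the residue computation carried out in the proof of Lemma \ref{lem Lagrange}. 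The only subtlety is justifying that the higher-order tail of $B_\ll$ does not interfere; this follows because $\mathscr{R}_\mu$ is a fixed analytic function and the square-root contribution depends analytically on $\ll$, so $\tilde B_\ll$ is jointly analytic in $(\ll,u)$ near $(1,0)$ and every Taylor coefficient is continuous in $\ll$.
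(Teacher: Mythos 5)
Your proposal is correct and follows essentially the same route as the paper: identify $G_{\mu_\ll}=B_\ll^{\langle -1\rangle}$ via Proposition \ref{prop neg root}, rescale $B_\ll$ so its coefficients converge to those of a cubic, and pass the limit through the Lagrange inversion formula to read off the coefficient of $w^{2k+1}$. The only difference is cosmetic: you absorb the factor $v(a)$ into the rescaling so the limit is $u-u^3$ (recomputing the residue $\binom{3k}{k}$ directly), whereas the paper rescales by $(\ll^2-1)^{1/2}$ alone and invokes Lemma \ref{lem Lagrange} with general $v$; your remark that each inverse coefficient is a polynomial in finitely many forward coefficients is exactly the justification behind the paper's appeal to ``continuity of the Lagrange inversion formula.''
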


\begin{remark} The appearance of the Fuss-Catalan numbers to leading order in Equation \ref{eq neg moments} for the negative moments of $\mu_\ll$ begs for a combinatorial explanation.  Indeed, there is a completely combinatorial proof of Theorem \ref{thm neg moments}; this is the content of Section \ref{sect PSD 1}.  What's more, the lower bound of Theorem \ref{main theorem} can be proved with a simple estimate directly from Theorem \ref{thm neg moments}; this is the content of Section \ref{sect PSD 2}.
\end{remark}

\begin{proof} Put $v = v(a)>0$.  Referring to Equation \ref{eq B 1}, rescale $B_\ll$ and set
\begin{equation} \label{eq F 1}
F_\ll(z) = -(\ll^2-1)^{-3/2} B_\ll\left((\ll^2-1)^{1/2} z\right).
\end{equation}
Then following Equation \ref{eq B 2} and using Equation \ref{eq m -2,-4}, we have
\[ F_\ll(z) = z - (v-1+\ll^4)z^3 - (\ll^2-1)(\kk_6(\mu)-2\ll^6)z^5 - (\ll^2-1)^2(\kk_8+5\ll^6)z^7 + \cdots \]
Hence, the coefficients in the power series for $F_\ll(z)$ converge to the coefficients in the power series
\[ F(z) = z - vz^3 +0 z^5 + 0 z^7 = z-vz^3 \]
as in Lemma \ref{lem Lagrange}.  Therefore, by the continuity of the Lagrange inversion formula, the coefficient $b_{2k+1}^{(\ll)}$ of $z^{2k+1}$ in $F_\ll^{\langle -1 \rangle}(z)$ converges by Lemma \ref{lem Lagrange} to $C^{(2)}_k v^k$ as $\ll\downarrow 1$; that is,
\begin{equation} \label{eq bl limit}
\lim_{\ll\downarrow 1} b^{(\ll)}_{2k+1} = C^{(2)}_k v^k.
\end{equation}
Now inverting Equation \ref{eq F 1} (setting $u=(\ll^2-1)z$),
\[ B_\ll(u) = -(\ll^2-1)^{3/2} F_\ll\left(\frac{u}{(\ll^2-1)^{1/2}}\right), \]
and therefore for $|w|$ small,
\begin{equation} \label{eq G F} \begin{aligned}
G_{\mu_\ll}(w) = B_\ll^{\langle -1 \rangle}(w)
&= (\ll^2-1)^{1/2}\; F_\ll^{\langle -1 \rangle}\left(-\frac{w}{(\ll^2-1)^{3/2}}\right) \\
&= -(\ll^2-1)^{1/2}\; F_\ll^{\langle -1 \rangle}\left(\frac{w}{(\ll^2-1)^{3/2}}\right),
\end{aligned}\end{equation}
the last equality following from the fact that $F_\ll^{\langle -1 \rangle}$ is an odd function.

\medskip

Now, $m_{-2k-2}(\mu_\ll)$ is the coefficient of $w^{2k+1}$ in the power series expansion of $G_{\mu_\ll}(w)$, and so by Equations \ref{eq G F},
\[ m_{-2k-2}(\mu_\ll) = (\ll^2-1)^{1/2}(\ll^2-1)^{-3/2} b^{(\ll)}_{2k+1} \left((\ll^2-1)^{-3/2}\right)^{2k+1} = (\ll^2-1)^{-(3k+1)} b^{(\ll)}_{2k+1}, \]
and hence the Theorem follows from Equation \ref{eq bl limit}.
\end{proof}

\subsection{Proof of Theorem \ref{main theorem}} \label{sect proof of main theorem}

The convergence of the coefficients of $F_\ll$ to those of $F$ as $\ll\downarrow 1$ is not enough to prove our main theorem.  In fact, the convergence is stronger, as we now show.  Choose $\varrho\in(0,\frac{1}{4})$ small enough that $\mathscr{R}_\mu$ is analytic in $B(0,\varrho)$.  Then
\[ B_\ll(z) = \mathscr{R}_\mu(z) + \frac{1-\sqrt{1+4\ll^2 z^2}}{2z} \]
is analytic in $B(0,\varrho)$ for all $\ll\in(1,2)$.  Thence, with $F_\ll$ as in Equation \ref{eq F 1}, $F_\ll$ is analytic and well-defined in $B(0,\varrho/\sqrt{\ll^2-1})$.  Note that $\varrho/\sqrt{\ll^2-1}\to\infty$ as $\ll\downarrow 1$.

\begin{proposition} \label{prop conv compact}
With $F_\ll$ as in Equation \ref{eq F 1} and $F$ as in Lemma \ref{lem Lagrange}, $F_\ll(z)\to F(z)$ uniformly on compact subsets of $\C$ as $\ll\downarrow 1$.
\end{proposition}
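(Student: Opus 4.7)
The strategy is to isolate the degree-$\le 3$ Taylor coefficients of $B_\ll$ (which produce the $z$ and $-vz^3$ terms of $F$) and to bound the remainder of $B_\ll$ by $C|w|^5$ uniformly in $\ll$; the rescaling in the definition of $F_\ll$ then buys a factor of $\ll^2-1\to 0$.

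First I observe that $B_\ll$ is analytic on $B(0,\varrho)$ and uniformly bounded there for $\ll\in(1,2)$. Indeed, $\mathscr{R}_\mu$ is independent of $\ll$ and analytic on $B(0,\varrho)$ by the choice of $\varrho$, and since $\varrho<1/4<1/(2\ll)$ the quantity $4\ll^2 z^2$ has modulus $<1$ on $B(0,\varrho)$, so $\sqrt{1+4\ll^2 z^2}$ is analytic on $B(0,\varrho)$ with modulus uniformly bounded in $\ll\in(1,2)$; the factor $1/(2z)$ is harmless because the numerator $1-\sqrt{1+4\ll^2 z^2}$ vanishes at $z=0$. Since $\mu$ is symmetric, $\mathscr{R}_\mu$ is odd, and the square-root term is also odd, so $B_\ll$ is odd with the power-series expansion of Equation \ref{eq B 2}. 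Define
\[ \Phi_\ll(w) = B_\ll(w) - (1-\ll^2)\,w - \bigl(v(a)-1+\ll^4\bigr)\,w^3. \]
This is analytic on $B(0,\varrho)$, vanishes to order $\ge 5$ at $w=0$, and is uniformly bounded in modulus for $\ll\in(1,2)$ on $B(0,\varrho/2)$ by the preceding paragraph. Hence $w\mapsto w^{-5}\Phi_\ll(w)$ extends analytically across $w=0$, and Cauchy's estimates produce $C>0$ independent of $\ll$ with $|\Phi_\ll(w)|\le C|w|^5$ for $|w|\le \varrho/4$.

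Now fix a compact $K\subset \overline{B}(0,R)$. For $\ll$ near $1$ satisfying $R\,(\ll^2-1)^{1/2}\le \varrho/4$, substituting $w=(\ll^2-1)^{1/2}z$ for $z\in K$ into the definition \ref{eq F 1} of $F_\ll$ gives
\[ F_\ll(z) - \bigl[z - \bigl(v(a)-1+\ll^4\bigr)z^3\bigr] = -(\ll^2-1)^{-3/2}\,\Phi_\ll\bigl((\ll^2-1)^{1/2}z\bigr), \]
whose modulus is bounded by $C\,(\ll^2-1)^{-3/2}\cdot(\ll^2-1)^{5/2}R^5 = CR^5(\ll^2-1)\to 0$. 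Combined with the scalar limit $v(a)-1+\ll^4\to v(a)$, this yields $F_\ll\to F$ uniformly on $K$.

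The only nontrivial ingredient is the uniform-in-$\ll$ analyticity and bound on $B_\ll$ over $B(0,\varrho)$; the condition $\varrho<1/4$ is precisely what prevents the branch points of $(1-\sqrt{1+4\ll^2 z^2})/(2z)$, located at $z=\pm i/(2\ll)$, from entering $B(0,\varrho)$ as $\ll$ ranges over $(1,2)$. Everything else is a routine Cauchy-estimate and rescaling bookkeeping.
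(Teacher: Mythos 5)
Your proof is correct and follows essentially the same route as the paper's: both isolate the cubic Taylor polynomial of $B_\ll$, establish a uniform-in-$\ll$ estimate $|B_\ll(z)-(1-\ll^2)z-(v(a)-1+\ll^4)z^3|\le C|z|^5$ on a fixed small disc (the paper's Equation \ref{eq bound}), and then conclude by the rescaling in Equation \ref{eq F 1}. The only difference is how that estimate is verified: you argue softly, via uniform boundedness of $B_\ll$ on $B(0,\varrho)$ for $\ll\in(1,2)$ (the branch points $\pm i/(2\ll)$ stay outside since $\varrho<1/4$), the order-$5$ vanishing of the remainder, and a Cauchy/maximum-modulus bound, whereas the paper bounds the power-series tails explicitly, splitting the remainder into the tail of $\mathscr{R}_\mu$ and the tail of the square-root expansion and using the Catalan-number bound $C_k<4^k$; both verifications are valid and lead to the same conclusion.
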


\begin{proof} We claim there exists a constant $C>0$ and a $\varrho'>0$ such that, for all $z\in B(0,\varrho')$ and all $\ll\in(1,2)$,
\begin{equation} \label{eq bound}
|B_\ll(z) - (1-\ll^2)z - (v(a)-1+\ll^4)z^3| \le C|z|^5.
\end{equation}
For the moment, assume Equation \ref{eq bound} has been proved.  Then setting $w= (\ll^2-1)^{1/2}z$,
\[ \begin{aligned}  |F_\ll(z)-z&+(v(a)-1+\ll^4)z^3| \\
=& |-(\ll^2-1)^{-3/2} B_\ll\left((\ll^2-1)^{1/2}z\right) - z + (v(a)-1+\ll^4)z^3| \\
=& \left|-(\ll^2-1)^{-3/2} B_\ll(w) - \frac{w}{(\ll^2-1)^{1/2}} + (v(a)-1+\ll^4)\left(\frac{w}{(\ll^2-1)^{1/2}}\right)^3\right| \\
=& (\ll^2-1)^{-3/2} |B_\ll(w) - (1-\ll^2) w - (v(a)-1 +\ll^4) w^3|,
\end{aligned} \]
and by Equation \ref{eq bound} this is
\[ \le (\ll^2-1)^{-3/2}\,C |w|^5 = (\ll^2-1)^{-3/2}\,C |(\ll^2-1)^{1/2} z|^5 = C(\ll^2-1)|z|^5 \]
for $z\in B(0,\varrho'/\sqrt{\ll^2-1})$.  Thus, we have
\[ F_\ll(z)-F(z) = F_\ll(z) - (z-v(a)z^3) = F_\ll(z) - z + v(a)z^3 = F_\ll(z) -z +(v(a)-1+\ll^4)z^3 + (1-\ll^4)z^3, \]
and so
\[ \begin{aligned} |F_\ll(z)-F(z)| &\le |F_\ll(z)-z+(v(a)-1+\ll^4)z^4| + (\ll^4-1)|z|^3 \\
&\le (\ll^4-1)|z|^3 + C(\ll^2-1)|z|^5. \end{aligned} \]
This proves the proposition.  Hence, it remains only to verify the estimate of Equation \ref{eq bound}.  Referring to Equation \ref{eq B 2},
\[ B_\ll(z) - (1-\ll^2)z - (v(a)-1+\ll^4)z^3 = \sum_{\ell=3}^\infty [\kk_{2\ell}(\mu)-(-1)^\ell C_{\ell-1}\ll^{2\ell}]\, z^{2\ell-1}. \]
It is convenient to break this up as a sum of two power series, $B_\ll(z) = D(z) + C_\ll(z)$ where
\[ \begin{aligned} D(z) &= \kk_6(\mu)z^5 + \kk_8(\mu)z^7 + \cdots \\
C_\ll(z) &= -C_2\ll^6 z^5 + C_3 \ll^8 z^7 - \cdots \end{aligned} \]
Now, $D(z)$ is a truncation of the power series for $\mathscr{R}_\mu(z)$, which is convergent and analytic in $B(0,\varrho)$; as $D(z)$ has a $0$ of order $5$ as $0$, it follows that there is a constant $C_1\ge|\kk_6(\mu)|$ such that $|D(z)| \le c_1|z|^5$ in that neighbourhood of $0$.  On the other hand, note that $C_k< 4^k$, and so with $\ll<2$,
\[ |C_\ll(z)| \le 2^{10}|z|^5 + 2^{14}|z|^7 + \cdots = 2^{10}\frac{|z|^5}{1-16|z|^2}, \]
and so choosing $\varrho'<1/4$, we may choose a constant $c_2>0$ with $|C_\ll(z)|\le c_2|z|^5$ for $z\in B(0,\varrho')$.  Setting $C = c_1+c_2$, this proves Equation \ref{eq bound}.
\end{proof}

\pagebreak

\begin{lemma} \label{lem sign variation}
For all $\ll>1$ sufficiently close to $1$, there is a unique $x_\ll\in(0,\frac{1}{\sqrt{v}})$ (where $v = v(a)$) such that the real analytic function $x\mapsto F_\ll'(x)$ has the following sign variation in $[-\frac{1}{\sqrt{v}},\frac{1}{\sqrt{v}}]$.
\begin{itemize}
\item $F_\ll'(x) >0$ for $x\in (-x_\ll,x_\ll)$.
\item $F_\ll'(x) = 0$ for $x = \pm x_\ll$.
\item $F_\ll'(x) <0$ for $x_r < |x| \le \frac{1}{\sqrt{v}}$.
\end{itemize}
Moreover, $x_\ll\to \frac{1}{\sqrt{3v}}$ and $F_\ll(x_\ll) \to \left(\frac{4}{27v}\right)^{1/2}$ as $\ll\downarrow 1$.
\end{lemma}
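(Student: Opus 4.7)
The plan is to transfer the sign analysis of the limit function $F(x) = x - vx^3$ to $F_\ll$, using the uniform-on-compacts convergence $F_\ll \to F$ from Proposition~\ref{prop conv compact}. First I would note that the domain of analyticity of $F_\ll$, namely $B(0, \varrho/\sqrt{\ll^2-1})$, expands without bound as $\ll \downarrow 1$, so once $\ll$ is close enough to $1$ all $F_\ll$ are simultaneously analytic on one fixed complex neighbourhood of $I = [-1/\sqrt{v}, 1/\sqrt{v}]$. Cauchy's integral formula for derivatives then upgrades the convergence to $F_\ll^{(k)} \to F^{(k)}$ uniformly on $I$ for every $k$; in particular $F_\ll' \to F'$ and $F_\ll'' \to F''$ uniformly on $I$.

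A direct calculation gives $F'(x) = 1 - 3vx^2$, vanishing exactly at $\pm 1/\sqrt{3v}$, and $F''(\pm 1/\sqrt{3v}) = \mp 2\sqrt{3v} \neq 0$, so both zeros are non-degenerate; moreover $F' > 0$ on $(-1/\sqrt{3v}, 1/\sqrt{3v})$ and $F' < 0$ on $I\setminus[-1/\sqrt{3v}, 1/\sqrt{3v}]$. I would fix a small $\epsilon > 0$ with $1/\sqrt{3v}+\epsilon < 1/\sqrt{v}$. On the compact sets $J_+ = [0, 1/\sqrt{3v}-\epsilon]$ and $J_- = [1/\sqrt{3v}+\epsilon, 1/\sqrt{v}]$, $F'$ is bounded away from $0$ with fixed signs $+$ and $-$ respectively, so uniform convergence gives $F_\ll' > 0$ on $J_+$ and $F_\ll' < 0$ on $J_-$ for $\ll$ near $1$. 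On $K = [1/\sqrt{3v}-\epsilon, 1/\sqrt{3v}+\epsilon]$ the second derivative $F''$ is bounded away from $0$ and strictly negative, hence for $\ll$ near $1$ one has $F_\ll'' < 0$ on $K$ and $F_\ll'$ is strictly decreasing there; combined with the sign information at $\partial K$, the intermediate value theorem then produces a unique zero $x_\ll \in K$ of $F_\ll'$. Piecing the three intervals together, $x_\ll$ is the unique zero of $F_\ll'$ in $(0, 1/\sqrt{v}]$ and $F_\ll'$ has exactly the claimed sign pattern on that interval. The negative half of $I$ is handled by symmetry: $F_\ll$ is odd (inherited from $B_\ll$, whose power series in Equation~\ref{eq B 2} involves only odd powers of $z$), so $F_\ll'$ is even.

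Finally, since $\epsilon > 0$ is arbitrary, $x_\ll \to 1/\sqrt{3v}$; uniform convergence on compacts then yields
\[
F_\ll(x_\ll) \;\longrightarrow\; F(1/\sqrt{3v}) \;=\; \frac{1}{\sqrt{3v}} - v\,(3v)^{-3/2} \;=\; \frac{2}{3\sqrt{3v}} \;=\; \left(\frac{4}{27v}\right)^{1/2}.
\]
The main subtlety is the upgrade from uniform convergence of $F_\ll$ itself to uniform convergence of its first two derivatives; this would fail if we tried to work on a fixed neighbourhood of $\lambda = 1$ in parameter space for a fixed-size domain in $z$, but because the $z$-domains of analyticity of $F_\ll$ expand as $\ll \downarrow 1$, a single complex neighbourhood of $I$ lies inside every $F_\ll$'s domain for $\ll$ close enough to $1$, and Cauchy's estimates apply uniformly in $\ll$.
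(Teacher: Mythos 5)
Your proof is correct and follows essentially the same route as the paper: both rest on Proposition \ref{prop conv compact}, the standard upgrade to uniform convergence of $F_\ll^{(p)}\to F^{(p)}$ on compacts, oddness of $F_\ll$, and elementary sign/monotonicity analysis transferred from $F(x)=x-vx^3$. The only (harmless) difference is in bookkeeping: the paper gets global strict monotonicity of $F_\ll'$ on $[0,\frac{1}{\sqrt v}]$ from $F_\ll^{(3)}<0$ together with $F_\ll''(0)=0$, and then locates $x_\ll$ and its limit by evaluating $F_\ll'$ at $0$, $\frac{1}{\sqrt v}$, and $\frac{1\pm\e}{\sqrt{3v}}$, whereas you split the interval into three compacta and use $F_\ll''<0$ only near $\frac{1}{\sqrt{3v}}$, which delivers $x_\ll\to\frac{1}{\sqrt{3v}}$ automatically.
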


\begin{proof} The uniform convergence of $F_\ll$ to $F$ on compact subsets of $\C$ implies by standard complex analysis that $F_\ll^{(p)}$ (the $p$th derivative of $F_\ll$) converges uniformly to $F^{(p)}$ on compact subsets of $\C$, for each $p\in\N$.  Since $F^{(3)}(x) = -6v<0$, we can choose $\ll_0>1$ such that for all $\ll\in(1,\ll_0)$,
\begin{equation} \label{eq sign var 1} F_\ll^{(3)}(x) < 0, \quad \text{for} \quad |x|\le \frac{1}{\sqrt{v}}. \end{equation}
Hence, for $\ll\in(1,\ll_0)$, $F_\ll''$ is strictly decreasing on $[-\frac{1}{\sqrt{v}},\frac{1}{\sqrt{v}}]$.  Moreover, $F_\ll''(0)=0$ since $F_\ll$ is an odd function.   Therefore,
\[ F_\ll''(x)<0 \quad \text{for } \quad \textstyle{x\in(0,\frac{1}{\sqrt{v}})}. \]
Hence $F_\ll'$ is strictly decreasing on $[0,\frac{1}{\sqrt{v}}]$ for $\ll\in(1,\ll_0)$.  Moreover,
\[ \begin{aligned}
\lim_{\ll\downarrow 1} F_\ll'(0) &= F'(0) =1 \\
\lim_{\ll\downarrow 1} F_\ll'(\textstyle{\frac{1}{\sqrt{v}}}) &= F'(\textstyle{\frac{1}{\sqrt{v}}}) = -2.
\end{aligned} \]
Therefore we can choose $\ll_1\in(0,\ll_0]$ such that
\[ F_\ll'(0)>0 \quad \text{and} \quad F_\ll'(\textstyle{\frac{1}{\sqrt{v}}}) <0 \quad \text{for} \quad 0<\ll<\ll_1. \]
Hence, for all such $\ll$, the equation $F_\ll'(x) = 0$ has exactly one solution $x_\ll$ in the interval $(0,\frac{1}{\sqrt{v}})$, and $F_\ll'(x) > 0$ for $x\in [0,x_\ll)$, while $F_\ll'(x) <0$ for $x\in (x_\ll, \frac{1}{\sqrt{v}})$.  Since $F_\ll'$ is an even function, the above-stated sign variation holds for all $\ll\in(1,\ll_1)$.

\medskip

Now, note that $\frac{1}{\sqrt{3v}}$ is a critical point for $F$, and also for $\e\in(0,\frac{1}{2})$,
\[ F'(\textstyle{\frac{1-\e}{\sqrt{3v}}}) >0 \quad \text{and} \quad F'(\textstyle{\frac{1+\e}{\sqrt{3v}}}) <0. \]
Therefore, by the uniform convergence,
\[ F_\ll'(\textstyle{\frac{1-\e}{\sqrt{3v}}}) >0 \quad \text{and} \quad F_\ll'(\textstyle{\frac{1+\e}{\sqrt{3v}}}) <0, \]
for $\ll>1$ sufficiently close to $1$.  It follows that $\frac{1-\e}{\sqrt{3v}} < x_\ll < \frac{1+\e}{\sqrt{3v}}$ eventually as $\ll\downarrow 1$.  This shows that $\lim_{\ll\downarrow 1} x_\ll = \frac{1}{\sqrt{3v}}$, and by the uniform convergence of $F_\ll$ to $F$ on $[-v,v]$, it follows that
\[ \lim_{\ll\downarrow 1} F_\ll(x_\ll) = F(\textstyle{\frac{1}{\sqrt{3v}}}) = \left(\frac{4}{27v}\right)^{1/2}, \]
as required.
\end{proof}

This finally brings us to the proof of the main theorem.
\begin{proof}[Proof of Theorem \ref{main theorem}] It follows from Lemma \ref{lem sign variation} that for all $\ll>1$ sufficiently close to $1$, $F_\ll^{\langle -1 \rangle}$ has an analytic extension to a complex open neighbourhood of the interval $(-F_\ll(x_\ll),F_\ll(x_\ll))$, but that both endpoints of the interval are singular points of $F_\ll^{\langle -1 \rangle}$.  Since
\[ G_{\mu_\ll}(z) = -(\ll^2-1)^{1/2} F_\ll^{\langle -1 \rangle}\left(\frac{z}{(\ll^2-1)^{3/2}}\right) \]
for $|z|$ small, it follows that $G_{\mu_\ll}$ has an analytic extension to a complex neighbourhood of
\[ I_\ll = \left(-(\ll^2-1)^{3/2}F_\ll(x_\ll),(\ll^2-1)^{3/2}F_\ll(x_\ll)\right), \]
but the endpoints of the interval are singular points for $G_{\mu_\ll}$.  Therefore,
\[ \pm(\ll^2-1)^{3/2} F_\ll(x_\ll) \in \supp(\mu_\ll), \]
while
\[ I_\ll\cap \supp(\mu_\ll) = \emptyset. \]
Since $\mu_\ll = \tilde{\mu}_{|a-\ll|}$, it follows that
\[ \|(a-\ll)^{-1}\| = \frac{1}{(\ll^2-1)^{3/2}F_\ll(x_\ll)}, \]
for $\ll>1$ sufficiently close to $1$.  As $\ll\downarrow 1$, this tends (by Lemma \ref{lem sign variation}) to
\[ \frac{1}{2^{3/2}}\,\frac{1}{(\ll-1)^{3/2}}\, \left(\frac{27v}{4}\right)^{1/2}, \]
thus proving Theorem \ref{main theorem}.
\end{proof}

\section{The Combinatorics of Negative Moments} \label{sect Neg Moments}

In this final section, we provide a new combinatorial framework for even moments of the absolute resolvent of an $\mathscr{R}$-diagonal operator (that is, negative moments of $|\ll-a|^2$ where $a$ is $\mathscr{R}$-diagonal and $\ll>\|a\|_2$).  This approach, through {\em partition structure diagrams}, is used in Section \ref{sect PSD 1} below to give a new proof of (a more refined statement of) Theorem \ref{thm neg moments}.  In Section \ref{sect PSD 2}, we show how knowledge of these moments alone yields the sharp lower bound of Theorem \ref{main theorem}.

\subsection{Partition Structure Diagrams} \label{sect PSD 1}

Let us normalize $a$ once again so that $\|a\|_2 = 1$, and let $\ll>1$.  For convenience, let $r = 1/\ll \in (0,1)$.  Then we may rewrite $R_a(\ll) = (\ll-a)^{-1} = r(1-ra)^{-1}$.  Hence, for any positive integer $k$,
\[ \begin{aligned} |R_a(\ll)|^{2(k+1)} &= [(\ll-a)(\ll-a^\ast)]^{-(k+1)} = \left[(\ll-a^\ast)^{-1}(\ll-a)^{-1}\right]^{k+1} \\
&= r^{2(k+1)} \left[(1-ra^\ast)^{-1}(1-ra)^{-1}\right]^{k+1}. \end{aligned} \]
Expanding the geometric series inside this term gives
\[ |R_a(\ll)|^{2(k+1)} = r^{2(k+1)} \left[\sum_{n\ge 0} r^n a^{\ast n}\,\sum_{m\ge 0} r^m a^m\right]^{k+1}. \]
Expanding this product of summations we have
\[ |R_a(\ll)|^{2(k+1)} = r^{2(k+1)} \sum_{n_0,\ldots,n_k\atop m_0,\ldots,m_k} r^{n_0+\cdots+n_k+m_0+\cdots+m_k} a^{\ast n_0} a^{m_0} \cdots a^{\ast n_k} a^{m_k}. \]
Since $a$ is $\mathscr{R}$-diagonal, it is rotationally-invariant, and so only monomials with equal numbers of $a$ and $a^\ast$ can have non-zero mean in the state $\phi$.  Thus,
\begin{equation} \label{eq neg mom expansion 1}
m_{-2k-2}(\mu_\ll) = \phi\left(|R_a(\ll)|^{2(k+1)}\right) = r^{2(k+1)}\hspace{-0.3in}\sum_{{n_0,\ldots,n_k\atop m_0,\ldots,m_k}\atop n_0+\cdots +n_k = m_0+\cdots+m_k} \hspace{-0.3in} r^{2(n_0+\cdots+n_k)}\, \phi(a^{\ast n_0} a^{m_0} \cdots a^{\ast n_k} a^{m_k}).
\end{equation}
We now employ Equation \ref{eq R-diag moment cumulant} expanding the mean as
\[ \phi(a^{\ast n_0} a^{m_0} \cdots a^{\ast n_k} a^{m_k}) = \sum_{\pi\in NC(n_0,m_0,\ldots,n_k,m_k)} \kk_\pi[a^{\ast,n_0},a^{,m_0},\ldots,a^{\ast,n_k},a^{,m_k}]. \]

\begin{notation} The following shorthand notations will make the text below far more readable.
\begin{itemize}
\item Let $\mx{n},\mx{m}$ stand for multi-indices $(n_0,\ldots,n_k)$ and $(m_0,\ldots,m_k)$.
\item The sum of a multi-index $n_0+\cdots+n_k$ is denoted $|\mx{n}|$.
\item Denote the interleaved multi-index $(n_0,m_0,\ldots,n_k,m_k)$ as $\mx{n}\merge\mx{m}$.
\item Denote the cumulant $\kk_{2\ell}[a,a^\ast,\ldots,a,a^\ast]$ as $\alpha_\ell(a)$; note, by traciality, $\alpha_\ell(a)$ is also equal to the cumulant $\kk_{2\ell}[a^\ast,a,\ldots,a^\ast,a]$.
\end{itemize}
\end{notation}

Because $a$ is $\mathscr{R}$-diagonal, the mixed cumulant $\kk_\pi[a^{\ast,n_0},a^{,m_0},\ldots,a^{\ast,n_k},a^{,m_k}]$ is a product of terms $\alpha_\ell(a)$ with $\ell$ between $1$ and $k+1$.  The precise product is determined by the {\em block profile} of the partition $\pi$.

\begin{definition}
Let $\mx{n},\mx{m}$ be multi-indices, and let $\pi\in NC(\mx{n}\merge\mx{m})$.  The {\bf block profile} $\mx{pr}(\pi)$ is the multi-index $\mx{p} = (p_1,\ldots,p_{k+1})$, where $p_1$ is the number of blocks in $\pi$ of size $2$, $p_2$ is the number of blocks in $\pi$ of size $4$, and so forth.  Note that, in this case, the sum $|\mx{n}|+|\mx{m}| = 2|\mx{n}|$ is equal to $2p_1+4p_2+\cdots+2(k+1)p_{k+1}$.  Denote this number as $\ex(\mx{p})$,
\[ \ex(\mx{p}) = 2\sum_{j=1}^{k+1} j\,p_j. \]
\end{definition}

By definition, if $\mx{pr}(\pi) = \mx{p}$, then the mixed cumulant $\kk_\pi[a^{\ast,n_0},a^{,m_0},\ldots,a^{\ast,n_k},a^{,m_k}]$ is equal to $\alpha_1(a)^{p_1}\cdots \alpha_{k+1}(a)^{p_{k+1}}$.  Denote this product as $\alpha(a)^{\mx{p}}$.  We can thus re-index the sum in Equation \ref{eq neg mom expansion 1} as follows:
\begin{equation} \label{eq block profile expansion} m_{-2k-2}(\mu_\ll) = r^{2(k+1)} \sum_{\mx{n},\mx{m}\atop|\mx{n}| =|\mx{m}|} \sum_{\pi\in NC(\mx{n}\merge\mx{m})} \alpha(a)^{\mx{pr}(\pi)}\,r^{\ex(\mx{pr}(\pi))}. \end{equation}

The idea now is to reindex the sum over $\mx{n},\mx{m}$ and $\pi$ in terms of a new set of combinatorial objects: {\em partition structure diagrams}.  Set $V_{k+1} = \{(1,1),(1,\ast),(2,1),(2,\ast),\ldots,(k+1,1),(k+1,\ast)\}$; view $V_{k+1}$ as vertices (in sequence) on the boundary of a disc.  It is possible to encode all the information in a pair $\mx{n},\mx{m}$ and a partition $\pi\in NC(\mx{n}\merge\mx{m})$ succinctly in terms of $V_{k+1}$, as follows.  Consider subsets of $V_{k+1}$ of the form $P=\{(v_1,1),(w_1,\ast),\ldots,(v_\ell,1),(w_\ell,\ast)\}$ where $v_1\le w_1<v_2\le w_2 < \cdots < v_{\ell}\le w_{\ell}$ and $\ell\le k+1$; viewed on the disc, $P$ is a  convex $2\ell$-gon whose vertices alternate between $1$ and $\ast$.  (Note: $\ell=1$ is allowed -- a $2$-gon is a line-segment.)

\medskip

For any $\mx{n},\mx{m},\pi\in NC(\mx{n}\merge\mx{m})$, assign non-negative integers to the polygonal subset $P$ of $V_{k+1}$ as follows. $P$ is assigned the number of $2\ell$-blocks in $\pi$ that connect vertices in sequence as follows: a vertex from the $v_1$ run of $1$s, then a vertex from the $w_1$ run of $\ast$s, then a vertex from the $v_2$ run of $1$s, and so forth.  As such, a triple $\mx{n},\mx{m},\pi$ yields an {\em labeled polygonal diagram}, or LPD.  Figure demonstrates this procedure.

\begin{figure}[htbp]
\begin{center}
\input{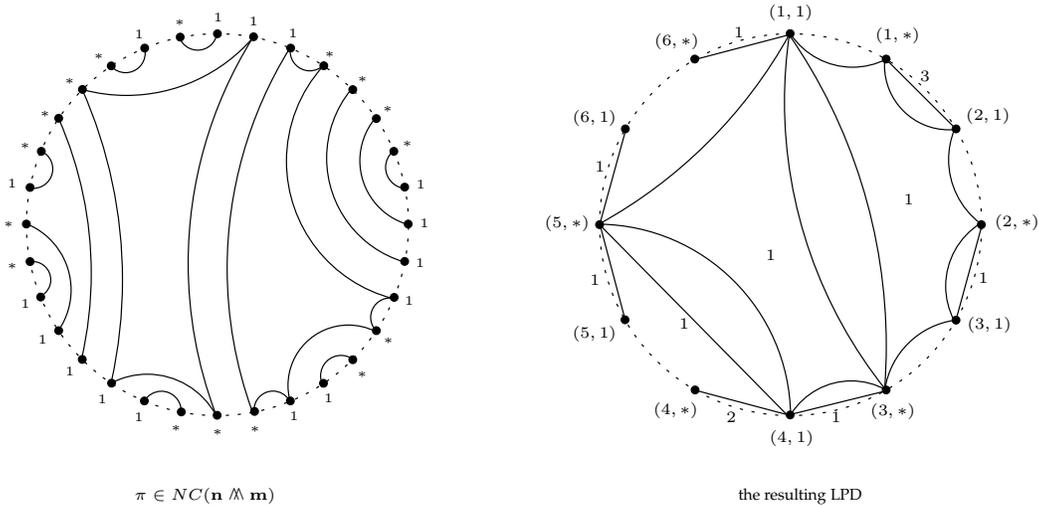}
\caption{\label{fig LPD} A partition in $\pi\in NC(\mx{n}\merge\mx{m})$, where $\mx{n} = (3,4,2,5,1,1)$ and $\mx{m} = (4,2,3,2,4,1)$, together with the resulting labeled polygonal diagram.  $2$-gons are drawn as straight line-segments, while non-degenerate polygons are drawn non-convex so all lines and labels are clearly visible.  Those polygonal subsets not appearing in the LPD have label $0$.}
\end{center}
\end{figure}

\noindent Note, in Figure \ref{fig LPD}, those polygons with more than $2$ sides have label $1$ or $0$.  This is a general phenomenon, as the reader can easily check: since $\pi$ is non-crossing, there can be at most one block of size $2\ell$ ($\ell>1$) connecting a set of vertices in the same $1$- and $\ast$-runs.  However, this restriction does not apply to $2$-blocks: there can be many nested pairings, as seen in Figure \ref{fig LPD}.  There is a simple geometric explanation here: for fixed $\mx{n},\mx{m}$, the map from $\pi$ to the LPD (viewed as a collection of inscribed polygons in a disc) is a compression.  Two $(2\ell)$-blocks ($\ell>1$) in $\pi$ with the same image under the compression would have non-trivially intersecting interiors, and therefore would cross.  Since $2$-blocks have no interior, on the other hand, they can compress to the same $2$-gon without crossing.

\medskip

Not every LPD is the compression of a partition: as explained, to come from a partition, the label of any non-degenerate polygon must be $0$ or $1$.  But there are further restrictions.  For example, if the label of the $2$-gon joining $(2,1)$ to $(4,\ast)$ in Figure \ref{fig LPD} were non-zero, no non-crossing pairing could compress to that LPD.  The restriction, of course, is that the polygons with non-zero label in the LPD should be non-crossing as well.  This brings us to the notion of a(n unlabled) {\em partition structure diagram}.

\begin{definition} \label{def PSD}
A {\bf partition structure diagram} or {\bf PSD} with $2(k+1)$ vertices, $D$, is a collection of polygons, each with an even number of sides, inscribed in the disc $V_{k+1}$, with the following additional properties:
\begin{itemize}
\item The vertices of any polygon $P\in D$ are among the vertices $V_{k+1}$, and any edge in $P$ connects a $1$ with a $\ast$.
\item The intersection of any two polygons in $P$ has $0$ area; that is, if any two intersect, it is along a common set of edges or vertices only.
\end{itemize}
Denote the set of all such $D$ by $PSD_{k+1}$.
\end{definition}
Following our discussion, if $D=(P_1,\ldots, P_s)$ is a PSD, and we label it with positive integers $L(P_1),\ldots,L(P_s)$, then it can only be the compression of a partition in some $NC(\mx{n},\mx{m})$ if the label of any non-degenerate polygon in $D$ is $1$.  Call such a labeling $L$ {\em valid}. 

\medskip

In fact, it is easy to see that any such labeled PSD is the compression of a {\em unique} partition.  Here is the algorithm: begin by decompressing each labeled $2$-gon into the requisite number of nested $2$-blocks.  Then, to avoid crossings, each remaining non-degenerate polygon can be inserted in one and only one way -- closer to the center of the disc than any surrounding $2$-blocks.

\medskip

Hence, there is a bijection between the set $\{(\mx{n},\mx{m},\pi)\,;\,\mx{n},\mx{m}\in\N^{k+1},|\mx{n}|=|\mx{m}|,\pi\in NC(\mx{n}\merge\mx{m})\}$ and the set $\{(D,L)\,;\,D\in PSD_{k+1}, L\text{ is a valid labeling of  }D\}$.  What's more, this bijection preserves the required statistic $\mx{pr}(\pi)$ in a recordable way: if $D\in PSD_{k+1}$ with component polygons $P_1,\ldots,P_s$, and if $L$ is a valid labeling of $D$, then the number of $2\ell$-blocks in the unique $\pi$ whose compression is $(D,L)$ is equal to $\sum_{P\in D} L(P)\1\{|P|=2\ell\}$ (here $|P|$ denotes the number of sides of $P$).  Hence, we can refer to the profile $\mx{pr}(D,L)$.  Note, in particular, that for this $\pi$,
\[ \ex(\mx{pr}(\pi)) = \sum_{P\in D} L(P)|P|. \]
We will refer to this sum simply as $\ex(D,L)$.

\medskip

We can thence re-index the summation of Equation \ref{eq block profile expansion} as follows.

\begin{equation} \label{eq PSD expansion}
m_{-2k-2}(\mu_\ll) = r^{2(k+1)} \sum_{D\in PSD_{k+1}} \sum_{L\text{ labels }D} \alpha(a)^{\mx{pr}(D,L)}\,r^{\ex(D,L)}.
\end{equation}
The sum in Equation \ref{eq PSD expansion} is quite complicated, but fortunately we are only looking for the leading order term in $\frac{1}{\ll^2-1} = \frac{r^2}{1-r^2}$.  To achieve this, it is convenient to break up the sum into two parts: over those $D$ with polygons having no more than $4$ sides, and the remaining $D$ that contain a polygon with at least $6$ sides.  Denote these two sets as $PSD^{\le 4}_{k+1}$ and $PSD^6_{k+1}$.

\medskip

For the sum over $PSD^{\le 4}_{k+1}$, we break up the sum according to the profile: look at those $D$ containing $s$ $2$-gons $P_1,\ldots,P_s$ and $t$ $4$-gons $Q_1,\ldots,Q_t$.  In this case, from our previous discussion, any valid labeling gives label $1$ to each of $Q_1,\ldots,Q_t$ (the label must be $\le 1$, and since we suppose each is present, the labels must be $1$).  On the other hand, valid labels for $P_1,\ldots,P_s$ range independently among the positive integers (again, $0$ is excluded since we suppose all $s$ are present).  For such a labeling $L$, we have
\begin{equation} \label{eq v(D,L)} \ex(D,L) = 2(L(P_1)+\cdots+L(P_s)) + 4(L(Q_1)+\cdots+L(Q_t)) = 2(L(P_1)+\cdots+L(P_s)) + 4t. \end{equation}
For the cumulant term, the profile $\mx{pr}(D,L)$ has $L(P_1)+\cdots+L(P_s)$ $2$-blocks, and $L(Q_1)+\cdots+L(Q_t) = t$ $4$-blocks, and so
\[ \alpha(a)^{\mx{pr}(D,L)} = \alpha_1(a)^{L(P_1)+\cdots+L(P_s)}\cdot\alpha_2(a)^t. \]
However, $\alpha_1(a) = \kk_2[a,a^\ast] = \|a\|_2^2 = 1$ by our normalization, and so so the cumulant term simply becomes
\begin{equation} \label{eq cumulant term}
\alpha(a)^{\mx{pr}(D,L)} = \alpha_2(a)^t.
\end{equation}
So, letting $\eta_1,\ldots,\eta_s$ denote the labels $L(P_1),\ldots,L(P_s)\in\N-\{0\}$, we can express the sum over $PSD^{\le 4}_{k+1}$ as
\begin{equation} \label{eq PSD4} \begin{aligned}
r^{2(k+1)}\sum_{D\in PSD_{k+1}^{\le 4}} &\sum_{L\text{ labels }D} \alpha(a)^{\mx{pr}(D,L)}\,r^{\ex(D,L)} \\
&= r^{2(k+1)}\sum_{s,t=0}^\infty \Pi_{k+1}(s,t)\sum_{\eta_1,\ldots,\eta_s=1}^\infty \alpha_2(a)^t\, r^{2(\eta_1+\cdots+\eta_s)+4t}, \end{aligned} \end{equation}
where $\Pi_{k+1}(s,t)=\#\{D\in PSD_{k+1}^{\le 4}\,;\,D\text{ has $s$ $2$-gons $\&$ $t$ $4$-gons}\}$.  The internal sum (over $\eta_1,\ldots,\eta_s$) factors as a product of $s$ independent summations,
\[ \sum_{\eta_1,\ldots,\eta_s=1}^\infty r^{2(\eta_1+\cdots+\eta_s)+4t} 
= r^{4t}\left(\sum_{\eta=1}^\infty r^{2\eta}\right)^s = r^{4t}\left(\frac{1}{1-r^2}-1\right)^s. \]
So, summing over $PSD^{\le 4}_{k+1}$ yields
\begin{equation} \label{eq PSD4 2} r^{2(k+1)}\sum_{s,t=0}^\infty \Pi_{k+1}(s,t)\left(\alpha_2(a)r^4\right)^t\,\left(\frac{1}{1-r^2}-1\right)^s. \end{equation}
Of course, the indices $s,t$ really have finite ranges: the $2$-gons are chosen from among $\binom{k+1}{2}$ and the $4$-gons from $\binom{k+1}{4}$ possible configurations, meaning that the constant $\Pi_{k+1}(s,t)$ is $0$ for large enough $s,t$.  Since we seek the highest-order term in $\frac{1}{\ll^2-1}$, we are interested in the largest $s$ for which $\Pi_{k+1}(s,t)\ne 0$ for any fixed $t$: we expand the binomial to the power $s$ in Equation \ref{eq PSD4 2}, and are interested only in the term $\left(\frac{1}{1-r^2}\right)^s = \frac{\ll^{2s}}{(\ll^2-1)^s}$ of highest order.

\medskip

The key observation here is that, for any diagram $D$ with only $2$- and $4$-gons, additional $2$-gons may be added without crossings until the skeleton of $2$-gons partitions the area of $V_{k+1}$ into $4$-gons -- i.e.\ it produces a tiling of $V_{k+1}$ by $4$-gons.    There may be many distinct $4$-gon tilings that can result from such a completion.  Nevertheless, this means that, to enumerate those $D$ with $s$ $2$-gons and $t$ $4$-gons, we may begin by considering any possible $4$-gon tiling of $V_{k+1}$, and then consider all possible ways of including $s$ lines and $t$ $4$-gons in it.  This construction is purely combinatorial and the details are left to the interested reader.  The procedure is exemplified in Figure \ref{fig completion to tiling}.

\medskip

\begin{figure}[htbp]
\begin{center}
\input{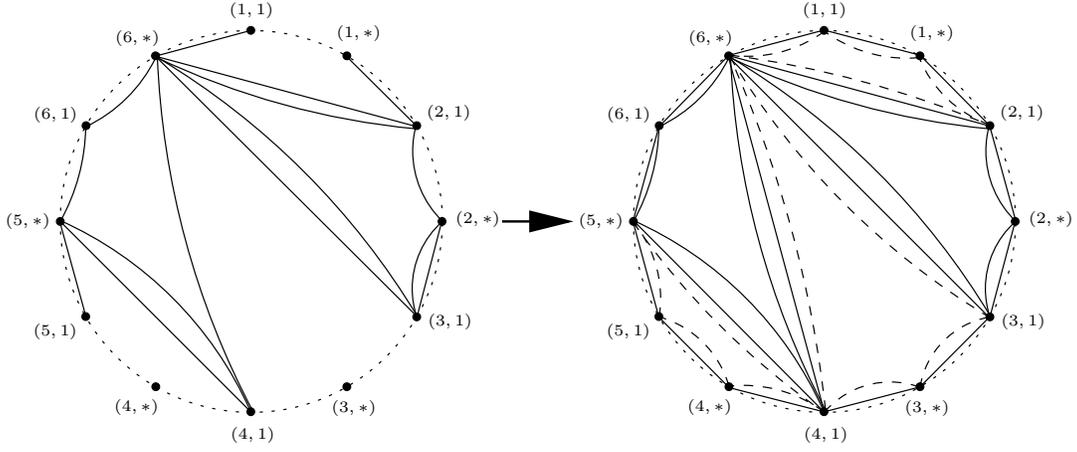}
\caption{\label{fig completion to tiling} A diagram in $PSD^{\le 4}_6$ with $7$ $2$-gons (shown as straight lines) and $2$ $4$-gons.  An additional $9$ $2$-gons can be added to produce a tiling of the $12$-gon by $4$-gons.}
\end{center}
\end{figure}

The maximal $s$, for given $t$, for which $\Pi_{k+1}(s,t)\ne 0$, is therefore given by the number of line-segments in a $4$-gon tiling of $V_{k+1}$.  The following classical results may be found in \cite{4-gon}.

\begin{lemma} \label{lem 4-gon tiling}
The number of line-segments in any $4$-gon tiling of a $(2k+2)$-gon is $3k+1$ (including the boundary edges).  The number of such distinct tilings is given by the Fuss-Catalan number $C^{(2)}_k$.
\end{lemma}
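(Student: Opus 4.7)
The plan is to prove both parts by classical combinatorial arguments. For the edge count, I would apply Euler's formula to the planar graph defined by the tiling. All vertices lie on the boundary, so $V = 2k+2$. Writing $E_b = 2k+2$ for the number of boundary edges, $E_i$ for the number of interior diagonals, and $F$ for the number of $4$-gon tiles, a double-count of edge--face incidences yields $4F = 2E_i + E_b = 2E_i + 2k+2$, while Euler's relation $V - (E_b + E_i) + (F+1) = 2$ reduces to $F = E_i + 1$. Solving these two linear equations simultaneously gives $E_i = k-1$ and $F = k$, so the total number of edges is $E_b + E_i = 3k+1$, as required.

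For the enumeration, I would set up the classical Fuss--Catalan recursion. Let $T_k$ denote the number of $4$-gon tilings of a convex $(2k+2)$-gon, with the convention $T_0 = 1$. Distinguish the boundary edge joining vertex $2k+2$ to vertex $1$; in any tiling it is contained in a unique quadrilateral $Q$, whose remaining two vertices $i < j$ lie in $\{2,\ldots,2k+1\}$. Removing $Q$ decomposes the rest of the polygon into three sub-polygons supported on the vertex sets $\{1,\ldots,i\}$, $\{i,\ldots,j\}$, and $\{j,\ldots,2k+2\}$ respectively. Each sub-polygon must itself be $4$-gon tileable, so must have an even number of vertices; a short parity check forces $i$ even and $j$ odd. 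Writing $i = 2p+2$ and $j = 2p+2q+3$ with $p,q \ge 0$ and setting $r = k-1-p-q \ge 0$, the three pieces are a $(2p+2)$-, a $(2q+2)$-, and a $(2r+2)$-gon, and their tilings may be chosen independently. This yields
\begin{equation*}
T_k = \sum_{\substack{p+q+r = k-1 \\ p,q,r \ge 0}} T_p\, T_q\, T_r.
\end{equation*}
The generating function $T(x) = \sum_{k\ge 0} T_k x^k$ therefore satisfies $T(x) = 1 + x\, T(x)^3$. By a Lagrange inversion argument analogous to (but for cube in place of square of) Lemma \ref{lem Lagrange}, one extracts $T_k = \frac{1}{2k+1}\binom{3k}{k} = C^{(2)}_k$.

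The main obstacle is verifying that the decomposition above is genuinely bijective: each tiling of the $(2k+2)$-gon must correspond to exactly one triple $(p,q,r)$ together with tilings of the three sub-polygons, with the parity constraints consistent. Once this is established the identification of $T_k$ with $C^{(2)}_k$ is standard; alternatively one may cite the well-known bijection between $4$-gon tilings of a $(2k+2)$-gon and rooted ternary trees with $k$ internal nodes, which are famously enumerated by the type-$2$ Fuss--Catalan numbers.
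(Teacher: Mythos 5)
Your proof is correct, and it is worth noting that the paper does not prove this lemma at all: it is invoked as a classical fact with a citation to \cite{4-gon}, so your argument supplies the self-contained proof rather than competing with one in the text. Both halves check out. The Euler-formula count (with the implicit, and here correct, hypothesis that all tile corners lie among the $2k+2$ boundary vertices, which is exactly the setting of the diagrams in Section \ref{sect PSD 1}) gives $E_i=k-1$ interior diagonals and $F=k$ tiles, hence $3k+1$ segments in total. For the enumeration, the ``obstacle'' you flag is already resolved by what you wrote: the quadrilateral $Q$ containing the distinguished boundary edge is unique (every boundary edge lies on exactly one tile), your parity check correctly forces $i$ even and $j$ odd, and the three sub-polygons carry independent tilings, so the decomposition is bijective and the recursion $T_k=\sum_{p+q+r=k-1}T_pT_qT_r$, equivalently $T(x)=1+x\,T(x)^3$, holds; Lagrange inversion applied to $U=T-1$, $U=x(1+U)^3$, gives $[x^k]U=\frac{1}{k}\binom{3k}{k-1}=\frac{1}{2k+1}\binom{3k}{k}=C^{(2)}_k$, as does the ternary-tree bijection you mention. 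What the paper's citation buys is brevity for a classical result that is not the novel content of the section; what your marked-edge decomposition buys is a self-contained argument in the same combinatorial spirit as the partition structure diagrams it serves, and it is essentially the argument of the cited reference.
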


\begin{remark} It is well-known that the Catalan number $C_k = C^{(1)}_k$ counts the number of $3$-gon (triangular) tilings of a $(k+2)$-gon.  This is the $p=1$ case of the following theorem, proved in \cite{4-gon}: The number of $(p+2)$-gon tilings of a $(pk+2)$-gon is the Fuss-Catalan number $C^{(p)}_k$.   The second statement of Lemma \ref{lem 4-gon tiling} is the $p=2$ case of this theorem.
\end{remark}

Thus, the largest $s$ for which $\Pi_{k+1}(s,t)\ne 0$ is $s=3k+1$, provided $t$ is not so large that there cannot be $t$ $4$-gons inserted into the tiling provided by the $3k+1$ $2$-gons.  It is an easy matter to count that there are $k$ distinct $4$-gons in any $4$-gon tiling of $V_{k+1}$, and so we have $\Pi_{k+1}(s,t)>0$ whenever $0\le s\le 3k+1$ and $0\le t\le k$.  For fixed $t$ in this range, there are $\binom{k}{t}$ distinct choices of positions for the $t$ $4$-gons out of the $k$ slots.  Hence, we have proved the following:
\begin{equation} \label{eq binomial coeff} \Pi_{k+1}(3k+1,t) = \binom{k}{t}C^{(2)}_k, \quad 0\le k\le t. \end{equation}
And, of course, $\Pi_{k+1}(s,t)=0$ for $t>k$.  Combining Equations \ref{eq PSD4 2} and \ref{eq binomial coeff}, we have that the leading-order coefficient in $\frac{1}{1-r^2}$ is contained in the expansion of
\[ \begin{aligned}  & r^{2(k+1)} \sum_{t=0}^k  \Pi_{k+1}(3k+1,t) \left(\alpha_2(a)r^4\right)^t\,\left(\frac{1}{1-r^2}-1\right)^{3k+1} \\
=\;& r^{2(k+1)}C^{(2)}_k \left(\frac{1}{1-r^2}-1\right)^{3k+1}\,\sum_{t=0}^k \binom{k}{t}\,\left(\alpha_2(a)r^4\right)^t.
\end{aligned} \]
The sum over $t$ simplifies, via the binomial theorem, to $(1+\alpha_2(a)r^4)^k$.  On the other hand, if we expand the binomial to the power $3k+1$, and reserve only the highest order term $\left(\frac{1}{1-r^2}\right)^{3k+1}$, we find that the leading order contribution to the $PSD^{\le 4}_{k+1}$ sum is given by
\begin{equation} \label{eq leading order intermediate 1} C^{(2)}_k\left(1+\alpha_2(a)r^4\right)^k\frac{r^{2(k+1)}}{(1-r^2)^{3k+1}}. \end{equation}
At this point, it is useful to manipulate the expression to bring it to a more familiar form.  If we return to the variable $\ll = 1/r$, Equation \ref{eq leading order intermediate 1} becomes
\[ C^{(2)}_k \left(1+\alpha_2(a)\ll^{-4}\right)^k\frac{\ll^{-2(k+1)}}{(1-\ll^{-2})^{3k+1}} \]
which equals
\[ C^{(2)}_k\left(\ll^4+\alpha_2(a)\right)^k\ll^{-4k} \frac{\ll^{-2k-2}\ll^{62+2}}{(\ll^2-1)^{3k+1}}. \]
Simplifying the last expression, and substituting the value $\alpha_2(a) = \|a\|_4^4-2 = v(a)-1$, we have finally that the leading order term in the $PSD^{\le 4}_{k+1}$ expansion is
\begin{equation} \label{eq leading order intermediate 2}
C^{(2)}_k\frac{(\ll^4-1+v(a))^k}{(\ll^2-1)^{3k+1}}.
\end{equation}
(Equation \ref{eq leading order intermediate 2} should be compared, favourably, with Equation \ref{eq m -2,-4}.)  It is worth noting that, since $\ll^4-1$ is divisible by $\ll^2-1$, we could again expand Equation \ref{eq leading order intermediate 2} as a binomial, and the leading order term in the highest power $\left(\frac{1}{\ll^2-1}\right)^{3k+1}$ is simply $C^{(2)}_k v(a)^k$.  This is the desired entire leading order term according to Theorem \ref{thm neg moments}, and so we must now argue away all the terms in the $PSD^6_{k+1}$ expansion.  That is, summing up and returning to Equation \ref{eq PSD expansion}, we have

\begin{equation} \label{eq leading order 1} \begin{aligned}
m_{-2k-2}(\mu_\ll) =  C^{(2)}_k&\frac{(\ll^4-1+v(a))^k}{(\ll^2-1)^{3k+1}} \\
&+ \ll^{-2(k+1)} \sum_{D\in PSD^6_{k+1}} \sum_{L\text{ labels }D} \alpha(a)^{\mx{pr}(D,L)}\,\ll^{-\ex(D,L)} \\
&+ \text{lower order terms in }\frac{1}{\ll^2-1}.
\end{aligned}\end{equation}

\noindent We now must proceed to show that the middle terms in Equation \ref{eq leading order 1} are all sub-leading order.

\begin{remark} Consider the special case $a=c$ is circular.  Here $\alpha_\ell(a)=0$ for $\ell>1$, and so the only terms in Equation \ref{eq PSD expansion} that contribute come from diagrams with {\em only $2$-gons}.  Referring to Equation \ref{eq PSD4 2}, in this case we have the exact formula
\[ m_{-2k-2}(\mu_\ll) = \ll^{-2(k+1)}\sum_{s=0}^{3k+1} \Pi_{k+1}(s,0)\frac{1}{(\ll^2-1)^s}. \]
The leading coefficient (with $s=3k+1$) is $C^{(2)}_k$; the lower-order coefficients are very challenging to calculate.  Nevertheless, the fact remains that the moment is a polynomial in $\frac{1}{\ll^2-1}$, which is a point of independent interest.
\end{remark}

Now let us consider the $PSD^6_{k+2}$ terms in Equation \ref{eq leading order 1}.  In fact, we can give a general expansion like the one given above for any allowed profile of PSD $D$.  Consider those $D$ with $s_1$ $2$-gons, $s_2$ $4$-gons, and so on through $s_{k+1}$ $2(k+1)$-gons.  (The condition that $D$ is in $PSD^6_{k+1}$ means there is some $\ell\ge 3$ with $s_\ell>0$.) Denote the $2\ell$-gons  present as $P^{\ell}_{1},\ldots,P^{\ell}_{s_\ell}$.  Valid labeling of such polygons again takes the following form: each of $P^1_1$ through $P^1_{s_1}$ can be labeled with any positive integer, while those $P^{\ell}_j$ with $\ell>1$ must have label $1$.  The general expansion is then
\[ r^{2(k+1)}\sum_{s_1,\ldots,s_{k+1}} \Pi_{k+1}(s_1,\ldots,s_{k+1}) \sum_{\eta_1,\ldots,\eta_{s_1}=1}^\infty \alpha(a)^{\mx{pr}}\, r^\ex, \]
where $\Pi_{k+1}(s_1,\ldots,s_{k+1})$ is the number of $D\in PSD_{k+1}$ with $s_1$ $2$-gons, $s_2$ $4$-gons, and so on through $s_{k+1}$ $2(k+1)$-gons, and in this general setting we have
\[ \alpha(a)^{\mx{pr}} = \alpha_2(a)^{s_2}\cdots \alpha_{k+1}(a)^{s_{k+1}}, \quad \ex = 2(\eta_1+\cdots+\eta_{s_1}) + 4s_2+\cdots +2(k+1)s_{k+1}. \]
(The expression for $\alpha(a)^{\mx{pr}}$ should also contain the term $\alpha_1(a)^{\eta_1+\cdots+\eta_{s_1}}$, but as above the normalization $\|a\|_2=1$ sets this term equal to $1$.)  Again the internal sum simplifies to a product, and we have
\begin{equation} \label{eq full expansion} m_{-2k-2}(\mu_\ll) = r^{2(k+1)}\sum_{s_1,\ldots,s_{k+1}} \Pi_{k+1}(s_1,\ldots,s_{k+1})\, r^{4s_2+6s_3+\cdots+2(k+1)s_{k+1}}\,\left(\frac{1}{1-r^2}-1\right)^{s_1}. \end{equation}
(This expression is completely general and we could have started with it instead of considering the case that $s_\ell=0$ for $\ell>2$ as we did.  Also, in the preceding notation, the terms $\Pi_{k+1}(s,t)$ would now be denoted $\Pi_{k+1}(s,t,0,\ldots,0)$.)  The range of each $s_j$ is through a finite set, and so again we see it is only the $2$-gons that yield an infinite expansion -- all other terms are polynomial in $r^2 = 1/\ll^2$.  Now, consider the portion of this sum corresponding to $PSD_{k+1}^6$: those terms for which at least one $s_\ell$ with $\ell>2$ is non-zero.  The following lemma shows that the leading order in $\frac{1}{\ll^2-1}$ cannot be achieved in this case.

\begin{lemma}
Let $2<\ell\le k+1$, and suppose that $s_\ell>0$. Then $\Pi_{k+1}(s,s_2,\ldots,s_{k+1}) = 0$ for $s\ge 3k+1$.
\end{lemma}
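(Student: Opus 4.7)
The plan is to bound the number of 2-gons in any $D$ with the given profile by extending $D$ to a maximal PSD $D^*$ and counting edges of the resulting planar graph via Euler's formula. First, I observe that distinct 2-gons in $D$ are distinct chords of $V_{k+1}$ (line-segments from a $1$-vertex to a $*$-vertex), and each chord occurs as at most one edge of the planar graph $G_D$ formed by the sides of all polygons in $D$. Hence the number $s$ of 2-gons in $D$ satisfies $s \le |E(G_D)|$.

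Next, I will extend $D$ to a maximal PSD $D^*$ by adjoining further alternating chords as 2-gons until no more can be added, allowing in particular new 2-gons that coincide with sides of big polygons already in $D$ (the PSD definition permits two polygons to meet along a common edge). This extension preserves $s$ and only increases the underlying planar graph, so $s \le |E(G_{D^*})|$. In $D^*$, the interior of each big polygon of $D$ remains a single face of $G_{D^*}$, since no polygon of the PSD may sit in the interior of another, while the alternating $1/\ast$ constraint forces every face outside the big polygons to be a $4$-gon (the minimal-size alternating face).

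Now I apply Euler's formula. Set $V = 2(k+1)$, $E = |E(G_{D^*})|$, $r = \sum_{\ell \ge 2} s_\ell$ (the total number of big polygons), $F_{\mathrm{ext}}$ the number of exterior $4$-gon faces, and $F = r + F_{\mathrm{ext}} + 1$ (counting the outer face). The identity $V - E + F = 2$ combined with the face-perimeter identity
\[ (2k+2) + \sum_{\ell=2}^{k+1} 2\ell\, s_\ell + 4 F_{\mathrm{ext}} = 2E \]
will yield, after eliminating $F_{\mathrm{ext}}$, the clean formula
\[ E = 3k + 1 - \sum_{\ell=2}^{k+1} (\ell - 2)\, s_\ell. \]
If $s_\ell > 0$ for some $\ell \ge 3$, then $\sum_{\ell \ge 2}(\ell-2) s_\ell \ge 1$, whence $s \le E \le 3k$, and therefore $\Pi_{k+1}(s, s_2, \ldots, s_{k+1}) = 0$ for $s \ge 3k+1$.

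The main point that needs careful verification is the maximality assertion used above: that the exterior of the big polygons in $V_{k+1}$ always admits a quadrangulation by alternating diagonals, so that $F_{\mathrm{ext}}$ is well-defined and each exterior face is genuinely a $4$-gon. This reduces to a short induction on the number of boundary vertices of each exterior region -- any simply-connected polygonal region in the disc whose boundary alternates between $1$- and $\ast$-vertices can be subdivided into $4$-gons by alternating diagonals, by repeatedly cutting off a $4$-gon along a chord connecting a $1$-vertex to a $\ast$-vertex three positions further along the boundary.
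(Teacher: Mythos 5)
Your argument is correct, and it reaches the lemma by a genuinely different route than the paper. The paper works \emph{inside} the big polygons: it subdivides a $2\ell$-gon ($\ell>2$) into $\ell-1$ quadrilaterals by adding $\ell-2$ chords and then invokes Lemma \ref{lem 4-gon tiling}, so that the enlarged non-crossing family of segments has size at most $3k+1$ and hence the original diagram has at most $(3k+1)-(\ell-2)\le 3k$ $2$-gons. You instead work \emph{outside} the big polygons: you complete the diagram to a maximal configuration in the exterior, keep each big polygon as a single face, and count edges with Euler's formula, arriving at the exact identity $E=3k+1-\sum_{\ell\ge 2}(\ell-2)s_\ell$. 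This yields the lemma immediately and in fact the sharper bound $s\le 3k+1-\sum_{\ell\ge 2}(\ell-2)s_\ell$, which charges every large polygon simultaneously rather than one at a time; it is also more self-contained, since the paper's appeal to ``the maximal number of non-crossing $2$-gons is $3k+1$'' tacitly uses the very fact you verify, namely that a maximal family of alternating chords quadrangulates each remaining region (alternation forces even faces, so any face with at least six sides admits an alternating diagonal). Two points are worth making explicit in your write-up: first, the step ``the interior of each big polygon remains a single face'' rests on reading Definition \ref{def PSD} as forbidding any chord (in particular a diagonal of a big polygon) from running through a big polygon's interior, which is the intended reading and the one consistent with the decompression bijection --- alternatively you may simply decline to add such chords, since you only need maximality outside the big polygons; second, the use of Euler's formula needs the plane graph to be connected, which holds because in a maximal extension every boundary edge of the $(2k+2)$-gon is present (adjacent vertices carry opposite labels), and this also guarantees that each exterior face is convex with a simple alternating boundary cycle, justifying both the face-degree identity and your cutting-off-a-$4$-gon induction.
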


\noindent In other words, there are no $\left(\frac{1}{\ll^2-1}\right)^{3k+1}$ or higher-order terms in the $PSD^6_{k+1}$ expansion.  The idea behind the proof is quite easy: the maximal number of non-crossing $2$-gons in $V_{k+1}$ is $3k+1$, which is achieved by any $4$-gon tiling.  According to Lemma \ref{lem 4-gon tiling}, any $2\ell$-gon $P^{\ell}_j$ in $D$ with $\ell>2$ can be subdivided into $\ell-1$ $4$-gons (in $C^{(2)}_{k-1}$ distinct ways) by adding $\ell-2$ lines.  The modified $D$ can have no more than the maximal number, $3k+1$, line segments, which means that the original $D$ can have no more than $(3k+1)-(\ell-2)$ $2$-gons; if $\ell\ge 3$, this means the maximal order is not achieved.  This is demonstrated in Figure \ref{fig dividing a 6-gon}.

\begin{figure}[htbp]
\begin{center}
\input{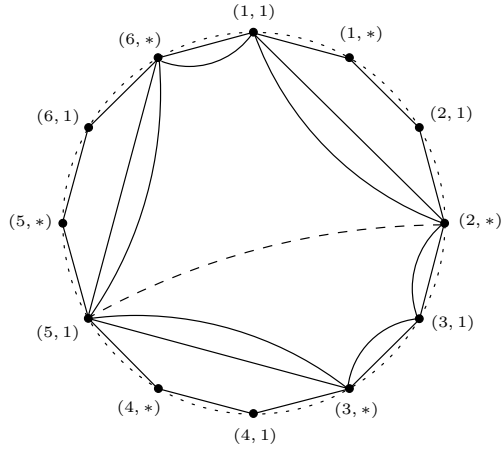}
\caption{\label{fig dividing a 6-gon}A PSD $D$ with a $6$-gon.  This $6$-gon can be subdivided into two $4$-gons (demonstrated by the dotted line), showing that $D$ cannot have the maximal possible number ($16$) of $2$-gons.}
\end{center}
\end{figure}

Thus, all of the $PSD_{k+1}^6$ terms in Equation \ref{eq leading order 1} contribute to {\em sub}-leading order, and referring to Equation \ref{eq leading order 1}, we have thus proved that

\begin{equation} \label{eq leading order 2}
m_{-2k-2}(\mu_\ll) =  C^{(2)}_k\frac{(\ll^4-1+v(a))^k}{(\ll^2-1)^{3k+1}} + \text{lower order terms in }\frac{1}{\ll^2-1}.
\end{equation}

\noindent As explained above, we may safely ignore the $\ll^4-1$ term inside the numerator as it cancels to yield lower-order contributions.  As such, Equation \ref{eq leading order 2} provides our combinatorial proof of Theorem \ref{thm neg moments}.  In fact, Equation \ref{eq leading order 2} is a finer statement, whose character warrants further study.  On the other hand, the combinatorial approach shows that all the expressions involved are in fact polynomials.  We record this as the following theorem, which is evident from Equation \ref{eq full expansion} noting that $\frac{1}{1-r^2}-1 = \frac{1}{\ll^2-1}$.

\label{thm moment polynomial page}

\begin{theorem} \label{thm moment polynomial}  Let $a$ be $\mathscr{R}$-diagonal in a $\mathrm{II}_1$-factor with trace $\phi$, normalized so that $\|a\|_2 = 1$, and let $k\ge 0$.  Then there is a {\em polynomial} $P_{k+1}^a$ in two variables so that
\[ \phi[(\ll-a)(\ll-a)^\ast]^{-(k+1)} = P_{k+1}^a\left(\frac{1}{\ll^2-1},\frac{1}{\ll^2}\right). \]
for $\ll>1$. \end{theorem}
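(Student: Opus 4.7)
The plan is to read off the polynomial structure directly from Equation \ref{eq full expansion}, which already expresses
\[ \phi\bigl([(\ll-a)(\ll-a)^\ast]^{-(k+1)}\bigr) = m_{-2k-2}(\mu_\ll) \]
as a sum indexed by profiles $(s_1,\ldots,s_{k+1})$ of partition structure diagrams in $V_{k+1}$. Each summand is a product of: a constant depending only on $a$ (through $\Pi_{k+1}$ and the $\mathscr{R}$-diagonal cumulants $\alpha_2(a),\ldots,\alpha_{k+1}(a)$); a power of $r^2 = 1/\ll^2$, namely $r^{2(k+1)+4s_2+6s_3+\cdots+2(k+1)s_{k+1}}$; and a factor $\left(\frac{1}{1-r^2}-1\right)^{s_1} = \left(\frac{1}{\ll^2-1}\right)^{s_1}$.

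The first step is to verify that only finitely many profiles contribute. For each $j\ge 2$, the number $s_j$ of mutually non-overlapping inscribed $2j$-gons in $V_{k+1}$ is crudely bounded by $\binom{k+1}{j}$, since each such polygon is determined by a $2j$-subset of the $2(k+1)$ boundary vertices. For $j=1$, Lemma \ref{lem 4-gon tiling} gives the sharper bound $s_1 \le 3k+1$: any non-crossing collection of $2$-gons extends to a $4$-gon tiling of $V_{k+1}$, and such a tiling contains exactly $3k+1$ line segments. Hence $\Pi_{k+1}(s_1,\ldots,s_{k+1})$ vanishes outside a finite rectangular range of profiles.

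Under the substitution $x = 1/(\ll^2-1)$ and $y = 1/\ll^2$, each surviving summand becomes a constant times $y^{(k+1)+2s_2+3s_3+\cdots+(k+1)s_{k+1}}\,x^{s_1}$, a monomial in $x$ and $y$. Summing the finitely many such monomials produces the desired polynomial $P_{k+1}^a(x,y)$. This argument also yields explicit upper bounds on the bi-degree: $P_{k+1}^a$ has degree at most $3k+1$ in its first variable and a bounded (computable) degree in its second.

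Essentially all the substantive work has already been done in the proof of Theorem \ref{thm neg moments}, where Equation \ref{eq full expansion} is derived and the combinatorial analysis of PSDs is carried out; the present theorem is the observation that when every profile count is finite and all $\ll$-dependence is packaged into powers of $r^2$ and $\frac{1}{1-r^2}-1$, the result must be a bivariate polynomial in $(x,y)$. The only potential obstacle would be an unexpected non-polynomial dependence on $\ll$ hidden in an unbounded profile count, and the bound $s_1\le 3k+1$ supplied by Lemma \ref{lem 4-gon tiling} is precisely what rules this out.
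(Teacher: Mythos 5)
Your proposal is correct and is essentially the paper's own argument: the paper derives Theorem \ref{thm moment polynomial} by declaring it evident from Equation \ref{eq full expansion} together with the identity $\frac{1}{1-r^2}-1=\frac{1}{\ll^2-1}$, exactly the reading-off you perform (your explicit finiteness bounds on the profiles, and your retention of the cumulant factors $\alpha_2(a)^{s_2}\cdots\alpha_{k+1}(a)^{s_{k+1}}$, only make explicit what the paper leaves implicit). No gaps; the minor imprecision in the crude count of $2j$-gons is immaterial since only finiteness of the profile range is needed.
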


\begin{remark} \label{rk polynomial funny business} It should be noted that, following Equation \ref{eq full expansion} and referring to Equation \ref{eq leading order intermediate 1}, the polynomial $P^{a}_{k+1}(x,y)$ can be taken so that its leading term in $x$ is
\[ C^{(2)}_k y^{2(k+1)}\left(1+(v(a)-1)y^2\right)^k x^{3k+1}. \]
This result looks quite different from the expression we would expect from Equation \ref{eq leading order 2}.  The point here is that the two variables $x,y$ of $P^a_{k+1}$ are not truly independent, since the instantiations $x=\frac{1}{\ll^2-1}$ and $y=\frac{1}{\ll^2}$ lead to the relation $x-y = xy$.  Using this, it is possible to transform the above expression into many other forms, and so it is impossible to speak of {\em the} polynomial in Theorem \ref{thm moment polynomial}.  \end{remark}

\subsection{Theorem \ref{main theorem} via negative moments} \label{sect PSD 2} The asymptotic upper-bound of Theorem \ref{main theorem}, with a non-sharp constant, is actually non-asymptotic.  This is an easy application of the strong Haagerup inequality in \cite{Kemp Speicher}, whose one-dimensional case (which is needed here) was really proved in \cite{Larsen}.  In short: with $a$ $\mathscr{R}$-diagonal normalized so that $\|a\|_2=1$, the spectral radius of $a$ is $\le 1$ and so for $\ll>1$ we may write $(\ll-a) = \ll(1-ra)$ where $r=1/\ll<1$ as in Section \ref{sect PSD 1}.  Then we may expand
\[ (\ll-a)^{-1} = r(1-ra)^{-1} = r\sum_{n=0}^\infty r^n a^n. \]
Hence $\|(\ll-a)^{-1}\| \le r\sum_n r^n \|a^n\|$.  By Corollary 3.2 in \cite{Larsen}, $\|a^n\| \le \sqrt{e}\sqrt{n}\, \|a\|$ since $\|a\|_2=1$, and so we have $(\ll-a)^{-1} \le \sqrt{e}\, r \|a\| \sum_n \sqrt{n}\,r^n = \sqrt{e}\,\ll^{-1} \|a\| \sum_n \sqrt{n} e^{-\ln\ll\, n}$.  It is well-known that the series $\sum_n n^p e^{-tn}$ is bounded above and below by constant multiples of $t^{-p-1}$ for $t,p>0$.  Expanding $\ln\ll$ as a power series in $(\ll-1)$, we then have
\begin{equation} \label{eq bad upper bound} \|(\ll-a)^{-1}\| \le \sqrt{e}\,\|a\| \ll^{-1} (\ln\ll)^{-3/2} \asymp \|a\| (\ll-1)^{-3/2}, \quad \ll\downarrow 1. \end{equation}

\begin{remark} In addition to a non-sharp constant, the estimate of Equation \ref{eq bad upper bound} scales with $\|a\|$ rather than the correct quantity $\sqrt{v(a)}$ from Theorem \ref{main theorem}.  Since both $v(a)$ and $\|a\|$ can be expressed in terms of $aa^*$, which (for $\mathscr{R}$-diagonal $a$) can have arbitrary compactly-supported distribution on $[0,\infty)$, it is easy to see that $\|a\|$ can be arbitrarily large compared to $\sqrt{v(a)}$.  Hence the analytic argument of Section \ref{sect general case} is necessary to get these sharp results. \end{remark}

\begin{remark} In fact, it is true that $\sqrt{v(a)} \le \|a\|$ under the normalization $\|a\|_2=1$; this is not obvious since the two sides scale differently.  Note that $v(a) = \|a\|_4^4 -1 = \|aa^*\|_2^2-1$.  Let $\nu$ be the distribution of $aa^*$; then the supremum of $\supp\nu$ is $\|aa^*\| = \|a\|^2$.  The condition $\|a\|_2 =1$ means that the mean of $aa^\ast$ is $1$, and so $\int_0^{\|a\|^2} t \mu(dt) =1$.  But this means that the measure $u(dt) = t \mu(dt)$ is also a probability measure on $[0,\|a\|^2]$, and thus we have
\[ \|a\|_4^4 = \|aa^*\|_2^2 = \int_0^{\|a\|^2} t^2\,\mu(dt) = \int_0^{\|a\|^2} t\,u(dt) \le \|a\|^2. \]
This shows that the upper-bound of Equation \ref{eq main theorem} is an improvement over the one derived from the strong Haagerup inequality. \end{remark}

\medskip

In fact, the {\em lower bound} of Equation \ref{eq main theorem} in its sharp form can be proved easily from Theorem \ref{thm neg moments}, which can itself be seen as a combinatorial result \`a la Section \ref{sect PSD 1}.  The idea is to use the following simple estimate.  Let $x,y$ be bounded commuting positive semi-definite operators.  Then $x\le\|x\|$ in operator sense, and so $xy \le \|x\|y$.  Applying this with $y = x^k$ for some positive integer $k$, we have $x^{k+1} \le \|x\| x^k$; continuing inductively this yields $x^{k+1} \le \|x\|^k x$, and so applying a state $\phi$ to both sides, $\phi(x^{k+1}) \le \|x\|^k \phi(x)$. 

\medskip

Now, apply this to $x=R_a(\ll)^\ast R_a(\ll)$, so that
\[ \begin{aligned} \|x\| &= \|R_a(\ll)\|^2, \\
\phi(x) &= \phi(R_a(\ll)R_a(\ll)^\ast) = \|(\ll-a)^{-1}\|_2^2 = m_{-2}(\mu_\ll), \\
\phi(x^{k+1}) &= \phi\left( \left((\ll-a)(\ll-a)^{\ast}\right)^{-(k+1)} \right) = m_{-2k-2}(\mu_\ll).
\end{aligned} \]
Thus, we have
\begin{equation} \label{eq norm moment est}
\|(\ll-a)^{-1}\|^{2k} \ge \frac{m_{-2k-2}(\mu_\ll)}{m_{-2}(\mu_\ll)}.
\end{equation}
Applying Equation \ref{eq neg moments}, this ratio is asymptotic to the following:
\[ \frac{m_{-2k-2}(\mu_\ll)}{m_{-2}(\mu_\ll)} \sim \frac{C^{(2)}_k v(a)^k (\ll^2-1)^{-3k-1}}{C^{(2)}_0 v(a)^0 (\ll^2-1)^{-1}} = C^{(2)}_k v(a)^k (\ll^2-1)^{-3k}, \quad \ll\downarrow 1. \]
Taking $2k$th roots, we see from Equation \ref{eq norm moment est} that as $\ll\downarrow 1$, $\|(\ll-a)^{-1}\|$ is bounded below by
\[ \left(C^{(2)}_k\right)^{1/2k} \sqrt{v(a)} (\ll^2-1)^{-3/2} \sim \left(C^{(2)}_k\right)^{1/2k} 2^{-3/2} \sqrt{v(a)}(\ll-1)^{-3/2}. \]
Stirling's formula shows that
\[ \sup_k\left(C^{(2)}_k\right)^{1/2k} = \lim_{k\to\infty}\left(C^{(2)}_k\right)^{1/2k} = \frac{3}{2}\sqrt{3}, \]
and this gives precisely the sharp constant $\sqrt{\frac{27}{32}}$ in Theorem \ref{main theorem}.

\end{document}